\newtheorem{definition}{Definition}[section]
\newtheorem{theorem}{Theorem}[section]
\newtheorem{lemma}{Lemma}[section]
\newtheorem{proposition}{Proposition}[section]
\newtheorem{corollary}[theorem]{Corollary}
\begin{document}

\title{Deformations of holomorphic-Higgs pairs}
\author{Takashi Ono}

\date{}
\maketitle

\begin{abstract}

Let $X$ be a complex manifold and $(E,\theta)$ be a Higgs bundle over $X.$ We study the deformation of holomorphic-Higgs pair $(X, E, \theta)$. We introduce the differential graded Lie algebra (DGLA) which comes from the deformation. We derive the Maurer-Cartan equation which governs the deformation of the holomorphic-Higgs pair, construct the Kuranishi family of it, and prove its local completeness.


\end{abstract}

\section{Introduction}
Let $X$ be a compact complex manifold and $(E,\overline\partial_E)$ be a holomorphic vector bundle on it.  Let $\overline\partial_{\mathrm{End}(E)}$ be the natural holomorphic structure on $\mathrm{End}(E)$ induced  by $E$. Let $A^{1,0}(\mathrm{End}(E))$ be the smooth sections of $\mathrm{End}(E)\otimes \Omega^{1,0}$. A Higgs field $\theta$ on $(E,\overline\partial_E)$ is an additional structure on $E$ such that, $\theta\in A^{1,0}(\mathrm{End}(E))$, $\overline\partial_{\mathrm{End}(E)} \theta=0$ and satisfies the integrability condition $\theta\wedge\theta=0.$  Higgs field was introduced in \cite{Hit} for the Riemann surfaces case and generalized to the higher dimensional case in \cite{Sim}.\par
This paper studies the deformation of pair $(X, E,\theta)$. The theory of deformation of pair for $(X, E)$ has been studied algebraically in \cite{Huy, Li, M1, Ser}, analytically in \cite{Hua, Siu} and in the style of Kodiara-Spencer \cite{Chan}. The deformation of pair $(X, E,\theta)$ was studied algebraically in \cite{M2}. Here we study this deformation in the style of Kodaira-Spencer \cite{Kod-Spe 1, Kod-Spe 2, Ku}. We use differential-geometric notions such as curvature, connection, and differential operators. We obtain similar results to the classical one.\par
Before we introduce our results, we recall some facts about the deformation theory for compact complex manifolds. Let $X$ be a compact complex manifold and $\Delta$ be a small ball centered at the origin of $\mathbb{C}^d$. Let $\{X_t\}_{t\in\Delta},X=X_0$ be a deformation family of $X$. By the standard argument of the deformation of a complex manifold, we have a family of Maurer-Cartan elements $\{\phi_t\}_{t\in\Delta}\subset A^{0,1}(TX)$ such that each $\phi_t$ determines the complex structure of $X_t$. Here $TX$ is the holomorphic tangent bundle of $X$. Let $\overline\partial$ be the Dolbeault operator on $X$ and $l_{\phi_t}=\partial(\phi_t\lrcorner)-\phi_t\lrcorner\partial $. While the Dolbeault operator $\overline\partial_t$ on $X_t$ is difficult to write down explicitly, we have a much more convenient operator,
\begin{equation*}
\overline\partial+l_{\phi_t}:A^{p,0}(X)\to A^{p,1}(X).
\end{equation*} 
 Although $\overline\partial+l_{\phi_t}$ is not same to $\overline\partial_t$, their kernel coincides. Hence we can check a function or a differential form is holomorphic or not for $X_t$  by only using the complex structure on $X$.\par
We have similar results in our case.  Once we consider the deformation of the holomorphic-Higgs pair over a small ball $\Delta$, we obtain a family of holomorphic-Higgs pair $\{(X_t,E_t,\theta_t)\}_{t\in\Delta}$ and $\{X_t\}_{t\in \Delta}$ is a deformation family of $X$. Hence we obtain a family of Maurer-Cartan element $\{\phi_t\}_{t\in\Delta}\subset A^{0,1}(TX)$ which each $\phi_t$ determines the $X_t$'s complex structure. Combining $\phi_t$, the holomorphic structure $\overline\partial_{E_t}$ on $E_t$ and the Higgs field $\theta_t$, we can define a differential operator $\overline D_t:A(X)\to A^{1}(X)$. Let $K$ be a hermitian metric on $E$, $\partial_K$ be the (1,0)-part of the Chern connection which is uniquely determined by $\overline\partial_E$ and $K$. $\{\partial_K,\phi_t\lrcorner\}$ is the operator such that $\{\partial_K,\phi_t\lrcorner\}=\partial_K(\phi_t\lrcorner)-\phi_t\lrcorner\partial_K$. $\overline D_t$ defines a family of elements  
\begin{equation*}
A_t:=\overline D_t-\overline\partial_E-\{\partial_K,\phi_t\lrcorner\}-\theta\in A^1(\mathrm{End}(E)).
\end{equation*}
 
Conversely, if we have a given family $A_t\in A^{0,1}(\mathrm{End}(E)),B_t\in A^{1,0}(\mathrm{End}(E))$ and $\phi_t\in A^{0,1}(TX)$, we can define a differential operator as \begin{equation*}
\overline D_t:=\overline\partial_E+\{\partial_K,\phi_t\lrcorner\}+A_t +\theta + B_t.
\end{equation*}
We introduce the first result of this paper. This result can be considered the Newlander-Nirenberg type theorem for deformations of pair $(X, E,\theta)$. 

\begin{proposition}[Proposition \ref{N-N}]
Assume that we have a smooth family $A_t\in A^{0,1}(\mathrm{End}(E)),B_t\in A^{1,0}(\mathrm{End}(E))$ and $\phi_t\in A^{0,1}(TX)$ parametrized by $t\in \Delta$. Let $A_0=B_0=0$ and $\phi_0=0.$ Let $\overline D_t$ be a differential operator as above. If $\overline D_t^2=0,$ then $(A_t,B_t,\phi_t)$  defines a  holomorphic-Higgs pair $(X_t,E_t,\theta_t)$(i.e. a complex manifold $X_t$ and Higgs bundle ($E_t,\theta_t)$ on it).
\end{proposition}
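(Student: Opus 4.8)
The plan is to unwind the condition $\overline{D}_t^2 = 0$ into its bidegree components and show that each component encodes exactly one of the three structures: an integrable almost-complex structure on $X$, a holomorphic structure on $E_t$, and a Higgs field satisfying the Dolbeault-closedness and integrability conditions. I would begin by writing $\overline{D}_t = \overline{\partial}_E + \{\partial_K,\phi_t\lrcorner\} + A_t + \theta + B_t$ and noting that, as an operator $A(X) \to A^1(X)$, it carries a decomposition by the $(p,q)$-type of its output once we fix the reference complex structure on $X$. The operator $\overline{D}_t^2$ is then a zeroth-order operator (an endomorphism-valued form), and I would group its terms by total form-type, the crucial point being that $\phi_t \in A^{0,1}(TX)$ shifts bidegrees in a controlled way through the contraction $\phi_t\lrcorner$ and the bracket $\{\partial_K,\phi_t\lrcorner\}$.

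Next I would isolate the purely tangential part, i.e. the component of $\overline{D}_t^2$ that involves only $\overline{\partial}$ and $\phi_t$ acting on the $X$-directions. This should reproduce the classical integrability (Maurer-Cartan) equation $\overline{\partial}\phi_t + \tfrac{1}{2}[\phi_t,\phi_t] = 0$; by the Newlander-Nirenberg theorem together with the standard correspondence recalled in the introduction (namely that $\overline{\partial} + l_{\phi_t}$ has the same kernel as the honest Dolbeault operator $\overline{\partial}_t$ of the deformed structure $X_t$), this gives the complex manifold $X_t$. Then I would examine the mixed $(0,2)$-type endomorphism component, which should read as the integrability of $\overline{\partial}_E + A_t$ relative to the new complex structure, yielding via the bundle Newlander-Nirenberg theorem a holomorphic structure $\overline{\partial}_{E_t}$ on $E_t$ over $X_t$. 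Finally the remaining components should split into the equation forcing $\theta + B_t$ to be $\overline{\partial}_{E_t}$-closed (the Higgs field being holomorphic with respect to the new structure) and the $(2,0)$-type equation $(\theta + B_t)\wedge(\theta + B_t) = 0$ (the Higgs integrability $\theta_t \wedge \theta_t = 0$), from which $\theta_t$ is defined.

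The main obstacle I anticipate is the bookkeeping in the bidegree decomposition, because the operator $\{\partial_K,\phi_t\lrcorner\}$ mixes the $(1,0)$ metric connection with the $(0,1)$-valued contraction $\phi_t\lrcorner$, so its contribution to $\overline{D}_t^2$ straddles several bidegrees and does not vanish term-by-term; one must verify that the curvature term $F_K$ of the Chern connection and the cross terms between $\partial_K$, $A_t$, $\theta$, and $B_t$ conspire correctly so that the vanishing of $\overline{D}_t^2$ really is equivalent to the conjunction of the three structural equations and not to some stronger or entangled condition. I would handle this by working in a local holomorphic frame and expanding $\overline{D}_t^2$ carefully, tracking which terms land in which bidegree after the twist by $\phi_t$; the key algebraic identity to establish is that the cross term $\{\partial_K,\phi_t\lrcorner\}$ together with $A_t$ assembles into the $(0,1)$-part of a connection that is integrable precisely when $\overline{D}_t^2 = 0$. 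Once the decomposition is in hand, each piece is recognized as a known integrability condition, and the three classical Newlander-Nirenberg-type results (for manifolds, for bundles, and the Dolbeault-closedness of the Higgs field) assemble the holomorphic-Higgs pair $(X_t, E_t, \theta_t)$.
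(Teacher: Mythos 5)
Your proposal is correct and follows essentially the same route as the paper: extract the Maurer--Cartan integrability of $\phi_t$ from the tangential part of $\overline D_t^2=0$ (giving $X_t$ by Newlander--Nirenberg), apply the bundle-level Newlander--Nirenberg theorem to the $(0,1)$-part $\overline\partial_E+\{\partial_K,\phi_t\lrcorner\}+A_t$ to obtain local flat frames defining $\overline\partial_{E_t}$, and read off the holomorphy and integrability of $\theta+B_t$ from the remaining bidegree components. The paper organizes the same computation by applying $\overline D_t^2$ to functions and to the flat frame sections rather than by a global operator-level bidegree decomposition, but the substance is identical.
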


Applying this result we can construct the differential graded Lie algebra (DGLA) $(L,d_{T(E)},[\cdot,\cdot]_{T(E)})$ and derive the $\textit{Maurer-Cartan equation}$ which governs the deformation of holomorphic-Higgs pair (See Section \ref{DGLA section} for more details). Let $\partial_{K}^{\mathrm{End}(E)}:A(\mathrm{End}(E))\to A^{1,0}(\mathrm{End}(E))$ be the differential operator induced by $\partial_K$ and for $\phi\in A^{0,i}(TX),$ $\{\partial_{K}^{\mathrm{End}(E)},\phi\lrcorner\}:=\partial_{K}^{\mathrm{End}(E)}(\phi\lrcorner)+(-1)^i\phi\lrcorner\partial_{K}^{\mathrm{End}(E)}$. Let $[\cdot,\cdot]$ be the standard Lie bracket on $A^*(\mathrm{End}(E))=\oplus_iA^i(\mathrm{End}(E))$ and  $[\cdot,\cdot]_{SN}$ be the standard Schouten-Nijenhuys bracket on $A^*(TX)=\oplus_iA^{0,i}(TX)$.
\begin{theorem}[Theorem \ref{DGLA}]
Let $L^i=\oplus_{p+q=i}A^{p,q}(\mathrm{End}(E))\oplus A^{0,i}(TX)$ and $L:=\oplus_i L^i.$ Let $(A,\phi)\in L^i$ and $(B,\psi)\in L^j.$
We set,
\begin{equation*}
[(A,\phi),(B,\psi)]_{T(E)}:=((-1)^i\{\partial_{K}^{\mathrm{End}(E)},\phi\lrcorner\}B-(-1)^{(i+1)j}\{\partial_{K}^{\mathrm{End}(E)},\psi\lrcorner\}A-[A,B],[\phi,\psi]_{SN})
\end{equation*}
Let $B\in A^{0,1}(Hom(TX,\mathrm{End}(E))$ and $C\in A^{1,0}(Hom(TX,\mathrm{End}(E))$ and they acts on $v\in A^{0,p}(TX)$ as,
\begin{equation*}
B(v):=(-1)^{p}v\lrcorner F_{d_{K}}, C(v):=\{\partial_{K}^{\mathrm{End}(E)},v\lrcorner\}\theta.
\end{equation*}
We define $d_{T(E)}:L\to L$ as,
\begin{equation*}
d_{T(E)}:=
\begin{pmatrix}
\overline\partial_{\mathrm{End}(E)} & B\\
0 & \overline\partial_{TX} 
\end{pmatrix}
+
\begin{pmatrix}
\theta & C\\
0 & 0
\end{pmatrix}.
\end{equation*}
Then $(L,d_{T(E)},[\cdot,\cdot]_{T(E)})$ is a DGLA.
\end{theorem}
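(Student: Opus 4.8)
The plan is to check the three defining properties of a DGLA: graded antisymmetry of $[\cdot,\cdot]_{T(E)}$, the graded Jacobi identity, and that $d_{T(E)}$ is a square-zero graded derivation, i.e. $d_{T(E)}^2=0$ together with the Leibniz rule $d_{T(E)}[a,b]_{T(E)}=[d_{T(E)}a,b]_{T(E)}+(-1)^{|a|}[a,d_{T(E)}b]_{T(E)}$. Throughout I would use that the $TX$-slot carries the classical polyvector DGLA $(A^{0,\bullet}(TX),\overline\partial_{TX},[\cdot,\cdot]_{SN})$, which is already a DGLA; consequently the $TX$-components of antisymmetry, Jacobi and Leibniz are automatic, and the real content sits in the $\mathrm{End}(E)$-slot and in the coupling between the two slots.

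For antisymmetry, the $TX$-entry is handled by the antisymmetry of $[\cdot,\cdot]_{SN}$, so it only remains to match the prefactors $(-1)^i$ and $(-1)^{(i+1)j}$ in the $\mathrm{End}(E)$-entry against the graded antisymmetry of $[\cdot,\cdot]$ on $A^*(\mathrm{End}(E))$ — a direct sign check. For $d_{T(E)}^2=0$ I would first recognise the upper-left block as the Higgs operator $D:=\overline\partial_{\mathrm{End}(E)}+[\theta,\cdot]$ (the matrix entry $\theta$ acting by graded commutator), the upper-right block as the single coupling $B+C$, and the lower-right block as $\overline\partial_{TX}$. Squaring this block-triangular operator yields exactly three conditions: $D^2=0$, $\overline\partial_{TX}^2=0$, and the off-diagonal relation $D\circ(B+C)+(B+C)\circ\overline\partial_{TX}=0$. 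Here $D^2=0$ unwinds into $\overline\partial_{\mathrm{End}(E)}^2=0$, the Higgs condition $\overline\partial_{\mathrm{End}(E)}\theta=0$, and the integrability $\theta\wedge\theta=0$, while $\overline\partial_{TX}^2=0$ is standard.

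The off-diagonal relation is the first genuine obstacle: it asserts that $B+C$ is, up to sign, a chain map from the polyvector complex into the Higgs complex. Splitting it by bidegree on a class $v\in A^{0,p}(TX)$ produces a $(0,p+2)$-piece governing $B$ alone, a mixed $(1,p+1)$-piece coupling $B$ and $C$, and a top-type $(2,p)$-piece involving $C$ and $\theta$. The key inputs are that the Chern curvature $F_{d_K}$ is of type $(1,1)$ — so that $(\partial_K^{\mathrm{End}(E)})^2=0$ — together with the Bianchi identities $\overline\partial_{\mathrm{End}(E)}F_{d_K}=0$ and $\partial_K^{\mathrm{End}(E)}F_{d_K}=0$; the $C$-pieces in addition use $\overline\partial_{\mathrm{End}(E)}\theta=0$ and $\theta\wedge\theta=0$. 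Each bidegree piece should then reduce to one of these identities after a contraction computation.

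The derivation property and the Jacobi identity are the main computational obstacles, and both rest on the same operator calculus for the covariant Lie-derivative $L_\phi:=\{\partial_K^{\mathrm{End}(E)},\phi\lrcorner\}$. Concretely I would establish two identities on $A^*(\mathrm{End}(E))$: a graded Leibniz rule for $L_\phi$ over the product $[\cdot,\cdot]$, and the composition law $[L_\phi,L_\psi]=\pm L_{[\phi,\psi]_{SN}}$ (graded commutator of operators). Because $(\partial_K^{\mathrm{End}(E)})^2=0$, these are the exact covariant analogues of the flat Cartan identities that make $[\cdot,\cdot]_{SN}$ satisfy Jacobi, and I would derive them by the same manipulation, the curvature contributions being absorbed precisely into the coupling $B$. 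Granting these, the Leibniz rule for $d_{T(E)}$ follows slot by slot. The hard part is the graded Jacobi identity: after inserting the composition law one must verify that the residual curvature- and $\theta$-terms generated across the three slots cancel, which is an exacting sign-and-bidegree bookkeeping problem. I would organise it by separating the pure-$\mathrm{End}(E)$, pure-$TX$, and genuinely mixed contributions and checking that each family of terms vanishes.
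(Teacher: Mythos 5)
Your proposal is correct and follows essentially the same route as the paper: the paper likewise reduces everything to the two operator identities you isolate --- the composition law $\{\partial_{K}^{\mathrm{End}(E)},[\phi,\psi]\lrcorner\}A=\{\partial_{K}^{\mathrm{End}(E)},\phi\lrcorner\}\{\partial_{K}^{\mathrm{End}(E)},\psi\lrcorner\}A-(-1)^{jk}\{\partial_{K}^{\mathrm{End}(E)},\psi\lrcorner\}\{\partial_{K}^{\mathrm{End}(E)},\phi\lrcorner\}A$ and the graded Leibniz rule of $\{\partial_{K}^{\mathrm{End}(E)},\phi\lrcorner\}$ over $[\cdot,\cdot]$ --- together with the commutator of $\overline\partial_{\mathrm{End}(E)}$ with $\{\partial_{K}^{\mathrm{End}(E)},\phi\lrcorner\}$ producing the curvature coupling, and the Bianchi and Higgs identities for $d_{T(E)}^{2}=0$. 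One minor imprecision: the graded Jacobi identity for $[\cdot,\cdot]_{T(E)}$ itself generates no residual curvature or $\theta$ terms (the bracket does not involve $F_{d_K}$ or $\theta$); those cancellations occur only in the Leibniz rule for $d_{T(E)}$, so your bookkeeping for Jacobi is actually lighter than you anticipate.
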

Combining Proposition 1.1 and Theorem 1.1, we have, 
\begin{theorem}[Theorem \ref{DGLA for Higgs bundle}]
Let $(A_t,B_t,\phi_t)$ be as above. Then $(A_t,B_t,\phi_t)$ defines a holomorphic-Higgs pair if and only if $(A_t,B_t,\phi_t)$ satisfies the Maurer-Cartan equation:
\begin{equation*}
d_{T(E)}(A_t+B_t)-\dfrac{1}{2}[(A_t+B_t,\phi_t),(A_t+B_t,\phi_t)]_{T(E)}=0.
\end{equation*}
\end{theorem}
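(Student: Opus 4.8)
The plan is to reduce the statement to the flatness condition $\overline D_t^2 = 0$ and then to identify the operator $\overline D_t^2$ with the action of the Maurer--Cartan expression on $E$-valued forms. By Proposition \ref{N-N}, if $\overline D_t^2 = 0$ then $(A_t, B_t, \phi_t)$ defines a holomorphic-Higgs pair; conversely the operator attached by the construction of the introduction to a genuine deformation family of such pairs squares to zero. Thus $(A_t, B_t, \phi_t)$ defines a holomorphic-Higgs pair if and only if $\overline D_t^2 = 0$, and the theorem follows once we prove the operator identity
\begin{equation*}
\overline D_t^2 \;=\; \rho\!\left( d_{T(E)}\eta - \tfrac12[\eta,\eta]_{T(E)} \right), \qquad \eta := (A_t + B_t, \phi_t) \in L^1,
\end{equation*}
where $\rho$ sends a class in $L^2$ --- an $\mathrm{End}(E)$-valued $2$-form paired with a $TX$-valued $(0,2)$-form --- to the zeroth-order operator acting by composition in the first slot and by contraction in the second. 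Because $\rho$ is injective, $\overline D_t^2 = 0$ becomes equivalent to the vanishing of the Maurer--Cartan expression.

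To prove the identity I would write $\overline D_t = \mathcal D + P_t + \Phi_t$ with $\mathcal D = \overline\partial_E + \theta$, $P_t = \{\partial_K, \phi_t\lrcorner\}$ and $\Phi_t = A_t + B_t$, expand $\overline D_t^2$, and sort the terms by bidegree and by slot (composition versus contraction). The base term $\mathcal D^2 = \overline\partial_E^2 + \overline\partial_{\mathrm{End}(E)}\theta + \theta\wedge\theta$ vanishes by the Higgs conditions. The terms linear in the deformation reproduce $d_{T(E)}\eta$: the anticommutator $\mathcal D\,\Phi_t + \Phi_t\,\mathcal D$ contributes $\overline\partial_{\mathrm{End}(E)}(A_t + B_t)$ and the $\theta$-action in the first slot, while $\mathcal D\,P_t + P_t\,\mathcal D$ contributes the off-diagonal couplings $B(\phi_t)$ and $C(\phi_t)$ in the first slot and $\overline\partial_{TX}\phi_t$ in the second. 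The terms quadratic in the deformation reproduce $-\tfrac12[\eta,\eta]_{T(E)}$: $\Phi_t^2 = \tfrac12[A_t + B_t, A_t + B_t]$ gives the $-[A,B]$ component, the cross term $P_t\,\Phi_t + \Phi_t\,P_t$ gives the twisted term $\{\partial_{K}^{\mathrm{End}(E)}, \phi_t\lrcorner\}(A_t + B_t)$, and $P_t^2$ gives $-\tfrac12[\phi_t,\phi_t]_{SN}$ in the second slot.

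The main obstacle is the evaluation of the terms built from $P_t = \{\partial_K, \phi_t\lrcorner\}$, which is exactly what forces the form of $d_{T(E)}$ and $[\cdot,\cdot]_{T(E)}$; three operator identities carry the weight. First, in $\mathcal D\,P_t + P_t\,\mathcal D$ the failure of $\partial_K$ and $\overline\partial_E$ to anticommute enters through the Chern curvature $\overline\partial_E\partial_K + \partial_K\overline\partial_E = F_{d_{K}}$, and contracting with $\phi_t$ yields precisely $B(\phi_t) = (-1)\,\phi_t\lrcorner F_{d_{K}}$; this curvature coupling is the genuinely new feature compared with deforming the base manifold alone. Second, the interaction of the Higgs field $\theta$ with $P_t$ produces $C(\phi_t) = \{\partial_{K}^{\mathrm{End}(E)}, \phi_t\lrcorner\}\theta$, reflecting that a deformation of the complex structure alters the $(1,0)$-type of $\theta$. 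Third, and most delicate, is the self-interaction $P_t^2$: one must show that the square of the twisted contraction operator reduces to a contraction governed by the Schouten--Nijenhuis bracket $[\phi_t,\phi_t]_{SN}$, an identity which in the scalar case underlies the Kodaira--Spencer equation, but which here has to be carried out in the presence of $\partial_K$, with the curvature contributions already separated off into the $B$ coupling (using $\partial_K^2 = 0$ for the Chern connection, so that $P_t^2$ receives no further curvature correction).

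Collecting the blocks of bidegree $(0,2),(1,1),(2,0)$ in the first slot and $(0,2)$ in the second, and checking that the sign factors $(-1)^i$ and $(-1)^{(i+1)j}$ in the definition of $[\cdot,\cdot]_{T(E)}$ agree with those produced by the graded commutators, confirms the operator identity and in particular fixes the overall minus sign in front of the bracket. Combined with the equivalence of the first paragraph, this proves the theorem. The sign bookkeeping across bidegrees is routine but lengthy; the conceptual content is concentrated in the curvature identity and the Schouten--Nijenhuis identity, which are the two steps requiring genuine work.
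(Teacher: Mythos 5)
Your proposal is correct and follows essentially the same route as the paper: reduce the statement to $\overline D_t^2=0$ via Proposition \ref{N-N} (and the converse construction of Section 2), then expand $\overline D_t^2$ term by term and match the resulting blocks with $d_{T(E)}\eta-\tfrac12[\eta,\eta]_{T(E)}$, which is precisely the content of Proposition \ref{vanish a} and its proof via Corollaries \ref{Cartan b vb} and \ref{Leibniz b}. The only cosmetic discrepancy is that the $TX$-component of $\overline D_t^2$ acts as the first-order operator $-\{\partial_K,(\overline\partial_{TX}\phi_t-\tfrac12[\phi_t,\phi_t])\lrcorner\}$ rather than by plain contraction, but the separation of the two components that you need still goes through.
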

In this paper, the differential $d_{T(E)}$ and the bracket $[\cdot,\cdot]_{T(E)}$ is constructed differential geometrically. The advantage of constructing $d_{T(E)}$ and  $[\cdot,\cdot]_{T(E)}$ differential geometrically is that we can write them down explicitly. Hence we can apply the technique of \cite{Kod-Spe 1,Kod-Spe 2,Ku} to construct the universal family (= $\mathit{Kuranishi}$ $\mathit{famliy}$) for a pair $(X,E,\theta)$.\par
Let $\Delta_{T(E)}$ be the laplacian induced by $d_{T(E)}$.  Since $\Delta_{T(E)}$ is an elliptic operator, $\mathbb{H}^i:=\mathrm{ker}(\Delta_{T(E)}:L^i\to L^i)$ is finite dimension. Let $\{\eta_1,\dots,\eta_n\}$ be a basis of $\mathbb{H}^1$. Let $d_{T(E)}^*$ be the formal adjoint of $d_{T(E)}$, $H:L^i\to\mathbb{H}^i$ be the projection and $G$ be the Green operator associated to $\Delta_{T(E)}.$ The next result is based on Kuranishi \cite{Ku}.
\begin{proposition}[Propostion 4.1, 4.2]
Let $t=(t_1,\dots,t_n)\in\mathbb{C}^n$ and $\epsilon_1(t):=\sum_i t_i\eta_i.$ For all $|t|<<1$ we have a $\epsilon(t)$ such that $\epsilon(t)$ satisfies the following equation:
\begin{equation*}
\epsilon(t)=\epsilon_1(t)+\dfrac{1}{2}d_{T(E)}^*G[\epsilon(t),\epsilon(t)]_{T(E)}.
\end{equation*}
Moreover, $\epsilon(t)$ is holomorphic respect to the variable $t$ and $\epsilon(t)$ satisfies the Maurer-Cartan equation
\begin{equation*}
d_{T(E)}\epsilon(t)-\dfrac{1}{2}[\epsilon(t),\epsilon(t)]_{T(E)}=0
\end{equation*}
if and only if $H[\epsilon(t),\epsilon(t)]_{T(E)}=0.$
\end{proposition}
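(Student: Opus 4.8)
The plan is to adapt the construction of Kuranishi \cite{Ku} (see also \cite{Kod-Spe 1, Kod-Spe 2}) to the DGLA $(L,d_{T(E)},[\cdot,\cdot]_{T(E)})$ furnished by Theorem \ref{DGLA}, producing $\epsilon(t)$ first as a formal power series, then proving convergence, and finally reading off the Maurer--Cartan criterion from the Hodge theory of the elliptic Laplacian $\Delta_{T(E)}$. First I would solve the integral equation formally. Writing $\epsilon(t)=\sum_{\mu\ge1}\epsilon_\mu(t)$ with $\epsilon_\mu(t)$ homogeneous of degree $\mu$ in $t$ and matching degrees in the equation $\epsilon=\epsilon_1+\tfrac12 d_{T(E)}^*G[\epsilon,\epsilon]_{T(E)}$, one is forced to take the prescribed linear term $\epsilon_1(t)=\sum_i t_i\eta_i$ together with the recursion
\[
\epsilon_\mu(t)=\frac{1}{2}d_{T(E)}^*G\sum_{\nu=1}^{\mu-1}[\epsilon_\nu(t),\epsilon_{\mu-\nu}(t)]_{T(E)}\qquad(\mu\ge2).
\]
This determines each $\epsilon_\mu$ uniquely from lower order data, so the formal solution exists and is unique. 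Since $\epsilon_1$ is holomorphic in $t$ and the recursion only applies the fixed operator $d_{T(E)}^*G$ and the bilinear bracket, every $\epsilon_\mu$ is a polynomial in $t$ with no dependence on $\bar t$, so the formal series is automatically holomorphic in $t$.

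The main obstacle is convergence, and here I would work in a fixed Hölder space $C^{k,\alpha}$ (or a Sobolev space) and exploit two elliptic facts. The Green operator $G$ attached to $\Delta_{T(E)}$ smooths by two derivatives while $d_{T(E)}^*$ is first order, so $d_{T(E)}^*G$ gains one derivative; on the other hand the bracket $[\cdot,\cdot]_{T(E)}$ is a first-order bidifferential operator, since it contains the terms $\{\partial_K^{\mathrm{End}(E)},\cdot\lrcorner\}$, and so loses exactly one derivative. These two effects cancel, so the operator $\epsilon\mapsto\epsilon_1+\tfrac12 d_{T(E)}^*G[\epsilon,\epsilon]_{T(E)}$ preserves a fixed Hölder ball. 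I would then bound the coefficients term by term by those of a scalar majorant series $A(t)=\sum_\mu c_\mu\big(\sum_i t_i\big)^\mu$ satisfying a quadratic relation $A=\beta\sum_i t_i+\gamma A^2$, which has positive radius of convergence; the resulting estimates $\|\epsilon_\mu\|_{k,\alpha}\le c_\mu$ force convergence of $\epsilon(t)$ for $|t|\ll1$. Elliptic regularity (bootstrapping on the integral equation) makes the limit smooth, and holomorphy in $t$ passes to the limit. This majorant/estimate step is the technical heart, exactly as in \cite{Kod-Spe 1, Ku}.

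For the Maurer--Cartan criterion I would set $\psi:=d_{T(E)}\epsilon-\tfrac12[\epsilon,\epsilon]_{T(E)}$ and combine the integral equation with the Hodge decomposition. Applying $d_{T(E)}$ to the equation and using $d_{T(E)}\epsilon_1=0$ (as $\epsilon_1\in\mathbb{H}^1$ is harmonic) together with $[\epsilon,\epsilon]_{T(E)}=H[\epsilon,\epsilon]_{T(E)}+d_{T(E)}d_{T(E)}^*G[\epsilon,\epsilon]_{T(E)}+d_{T(E)}^*d_{T(E)}G[\epsilon,\epsilon]_{T(E)}$ yields
\[
\psi=-\frac{1}{2}H[\epsilon,\epsilon]_{T(E)}-\frac{1}{2}d_{T(E)}^*G\,d_{T(E)}[\epsilon,\epsilon]_{T(E)}.
\]
The graded Leibniz rule gives $d_{T(E)}[\epsilon,\epsilon]_{T(E)}=2[d_{T(E)}\epsilon,\epsilon]_{T(E)}$ and the graded Jacobi identity gives $[[\epsilon,\epsilon]_{T(E)},\epsilon]_{T(E)}=0$; substituting $d_{T(E)}\epsilon=\psi+\tfrac12[\epsilon,\epsilon]_{T(E)}$ collapses the above to
\[
\psi=-\frac{1}{2}H[\epsilon,\epsilon]_{T(E)}-d_{T(E)}^*G[\psi,\epsilon]_{T(E)}.
\]
Since $H[\epsilon,\epsilon]_{T(E)}$ is harmonic while $d_{T(E)}^*G[\psi,\epsilon]_{T(E)}$ lies in the image of $d_{T(E)}^*$ and is thus orthogonal to $\mathbb{H}^2$, the vanishing of $\psi$ forces $H[\epsilon,\epsilon]_{T(E)}=0$, giving one implication. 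Conversely, if $H[\epsilon,\epsilon]_{T(E)}=0$ the identity reduces to $\psi=-d_{T(E)}^*G[\psi,\epsilon]_{T(E)}$; estimating in the Hölder norm and using $\|\epsilon\|_{k,\alpha}=O(|t|)$ gives $\|\psi\|_{k,\alpha}\le C|t|\,\|\psi\|_{k,\alpha}$, which forces $\psi=0$ for $|t|\ll1$. This establishes the desired equivalence and completes the construction of the Kuranishi family.
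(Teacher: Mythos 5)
Your proposal is correct and follows essentially the same route as the paper: the paper constructs $\epsilon(t)$ by invoking the power-series/majorant argument of Kodaira--Morrow (or the Banach implicit function theorem as in Kuranishi) with exactly the derivative bookkeeping you describe, and its proof of the Maurer--Cartan criterion is the same Hodge-theoretic manipulation leading to $\delta(t)=-d_{T(E)}^{*}G[\delta(t),\epsilon(t)]_{T(E)}$ and the smallness estimate. The only cosmetic difference is in the forward implication, where the paper uses $H[\epsilon,\epsilon]_{T(E)}=2Hd_{T(E)}\epsilon=0$ directly rather than reading it off your unified identity.
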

 Let $\Delta\subset\mathbb{C}^n$ be a small ball such that $\epsilon(t)$ is holomorphic on $\Delta.$  We define $\mathcal{S}\subset\Delta$ as 
 \begin{equation*}
 \mathcal{S}:=\{t\in\Delta~ |~ H[\epsilon(t),\epsilon(t)]_{T(E)}=0\}.
 \end{equation*}
$\mathcal{S}$ might not be smooth, however, it is a complex analytic space. Let $X_{\epsilon(t)}, E_{\epsilon(t)}, \theta_{\epsilon(t)}$ be the complex manifold, the holomorphic bundle, and the Higgs field which ${\epsilon(t)}$ defines. By combining the above results we have a family of holomorphic-Higgs pair $\{(X_{\epsilon(t)}, E_{\epsilon(t)},\theta_{\epsilon(t)})\}_{t\in\mathcal{S}}$. We call this family the \textit {Kuranishi family} of $(X, E,\theta)$ and $\mathcal{S}$ the \textit{Kuranishi space}. \par
For a complex manifold $X$, the Kuranishi family has the universal property such that the Kuranishi family contains all small deformation of $X$ (See \cite{Cap, Doua a,Doua b,Ku} for more details). However, in this paper, we only prove the abbreviated version of the completeness of Kuranishi space. The next theorem states that the constructed Kuranishi family has a property of local completeness.\par
Let $|\cdot|_k$ be the $k$-th Sobolev norm on $L^1$. We assume $k>>1$.

\begin{theorem}[Theorem \ref{main}]
Let $(X, E,\theta)$ be a holomorphic-Higgs pair. Let $\mathcal{S}$ be a Kuranishi family for $(X,E,\theta)$.  Let $\eta\in L^1$ be a Maurer-Cartan element. If $|\eta|_k$ is small enough, then there is a $t\in \mathcal{S}$ such that $(X_\eta, E_\eta,\theta_\eta)$ is isomorphic to $(X_{\epsilon(t)}, E_{\epsilon(t)},\theta_{\epsilon(t)})$ (See section 4 for the meaning of isomorphic).
\end{theorem}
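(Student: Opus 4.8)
The plan is to show that every sufficiently small Maurer--Cartan element $\eta$ is gauge-equivalent, under the action of the gauge group $\exp(L^0)$, to one of the elements $\epsilon(t)$ of the Kuranishi family. Here $L^0=A^{0,0}(\mathrm{End}(E))\oplus A^{0,0}(TX)$, and an element of $L^0$ generates a one-parameter family of automorphisms of $E$ together with the flow of a vector field on $X$; by the meaning of isomorphism fixed in Section~4, two Maurer--Cartan elements lying in the same $\exp(L^0)$-orbit define isomorphic holomorphic-Higgs pairs. Thus it suffices to produce $a\in L^0$ and $t\in\mathcal{S}$ with $e^{a}*\eta=\epsilon(t)$, where $e^{a}*\eta$ denotes the gauge action, which preserves the set of Maurer--Cartan elements and whose linearization at the origin is $a\mapsto d_{T(E)}a$.

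The central step is to gauge-fix $\eta$ into the Kuranishi slice $\{d_{T(E)}^*=0\}$. Note that $\epsilon(t)$ already satisfies $d_{T(E)}^*\epsilon(t)=0$, since $\epsilon_1(t)$ is harmonic and $d_{T(E)}^*d_{T(E)}^*=0$. Consider the map
\[
F(a,\eta):=d_{T(E)}^*\bigl(e^{a}*\eta\bigr),
\]
defined near the origin of (Sobolev completions of) $L^0\times L^1$ and valued in $L^0$. We have $F(0,0)=0$, and since $d_{T(E)}^*$ vanishes on $L^0$ the partial derivative in $a$ at the origin is $b\mapsto d_{T(E)}^*d_{T(E)}b=\Delta_{T(E)}b$. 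As $\Delta_{T(E)}$ is elliptic and restricts to an isomorphism of $(\mathbb{H}^0)^\perp$, and as the image of $F$ lies in $(\mathbb{H}^0)^\perp$ (because $\langle d_{T(E)}^*c,h\rangle=\langle c,d_{T(E)}h\rangle=0$ for harmonic $h$), the implicit function theorem produces a unique small $a=a(\eta)\in(\mathbb{H}^0)^\perp\cap L^0$, depending continuously on $\eta$ with $a(0)=0$, solving $F(a(\eta),\eta)=0$.

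Writing $\eta':=e^{a(\eta)}*\eta$, we obtain a Maurer--Cartan element with $d_{T(E)}^*\eta'=0$. Applying the Hodge decomposition $\mathrm{Id}=H+\bigl(d_{T(E)}d_{T(E)}^*+d_{T(E)}^*d_{T(E)}\bigr)G$ together with the Maurer--Cartan equation $d_{T(E)}\eta'=\tfrac12[\eta',\eta']_{T(E)}$ gives
\[
\eta'=H\eta'+\tfrac12\,d_{T(E)}^*G[\eta',\eta']_{T(E)}.
\]
Choosing $t$ by $H\eta'=\sum_i t_i\eta_i=\epsilon_1(t)$ (which is small because $\eta'$ is), this is exactly the equation of Proposition~4.1 with parameter $t$, so by the uniqueness of its small solution we conclude $\eta'=\epsilon(t)$. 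Applying $H$ to the Maurer--Cartan equation and using $H d_{T(E)}=0$ yields $H[\eta',\eta']_{T(E)}=0$, i.e. $H[\epsilon(t),\epsilon(t)]_{T(E)}=0$, so $t\in\mathcal{S}$. Hence $\eta$ is gauge-equivalent to $\epsilon(t)$ with $t\in\mathcal{S}$, which by Section~4 gives the isomorphism $(X_\eta,E_\eta,\theta_\eta)\cong(X_{\epsilon(t)},E_{\epsilon(t)},\theta_{\epsilon(t)})$.

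The hard part is the gauge-fixing step. One must work in the $k$-th Sobolev completions of $L^0$ and $L^1$, verify that the nonlinear gauge action $e^{a}*\eta$ is well defined and smooth there (this is precisely where $k\gg1$ enters, through the Sobolev multiplication theorem making $H^k$ an algebra so that the exponential series converges), and establish the elliptic estimates for $\Delta_{T(E)}$ needed both to invert the linearization and to recover, by elliptic regularity, the smoothness of $a(\eta)$ and hence of $\eta'$. The key quantitative point is to bound the size of $a(\eta)$ in terms of $|\eta|_k$ so that $\eta'$ stays within the domain where both the gauge action and the Kuranishi solution $\epsilon(t)$ are defined.
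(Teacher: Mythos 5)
Your overall strategy coincides with the paper's: move $\eta$ into the slice $\{d_{T(E)}^*=0\}$ by an action of $L^0$ found via the implicit function theorem, then use the Hodge decomposition together with the uniqueness of small solutions of $\epsilon=\epsilon_1(t)+\tfrac12 d_{T(E)}^*G[\epsilon,\epsilon]_{T(E)}$ to identify the gauge-fixed element with $\epsilon(t)$, and read off $t\in\mathcal{S}$ from $H\circ d_{T(E)}=0$. The second half of your argument is exactly Proposition 4.4 of the paper and is correct as written.

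The gap is in the first half. You invoke a gauge action $e^{a}*\eta$ of $\exp(L^0)$ and assert that ``by the meaning of isomorphism fixed in Section 4, two Maurer--Cartan elements lying in the same $\exp(L^0)$-orbit define isomorphic holomorphic-Higgs pairs.'' Section 4 defines isomorphism concretely, via a biholomorphism $F\colon X_\eta\to X_{\eta'}$ and a compatible holomorphic bundle isomorphism intertwining the Higgs fields; no formal gauge action is defined there, and the equivalence you assert is not automatic. Note in particular that the $TX$-component of $a\in L^0$ is a smooth $(1,0)$-vector field, which does not directly integrate to a flow of diffeomorphisms of $X$; and even granting a definition of $e^{a}*\eta$ by the universal DGLA formula, one still has to prove that the resulting Maurer--Cartan element is the one induced by an actual geometric isomorphism of the pair. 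This bridge is precisely where the paper's work lies: from $\gamma=(\upsilon,\xi)\in L^0$ it builds the diffeomorphism $f_\xi$ by geodesic flow, the bundle map $\Phi=P_\xi\circ \exp(\upsilon)$ with $P_\xi$ the parallel transport of the Chern connection along $f_\xi$, and then proves (Proposition \ref{small deform}) the expansion $\eta_\gamma=\eta+d_{T(E)}\gamma+R(\eta,\gamma)$ with $R$ of second order --- which is exactly what both guarantees $(X_{\eta_\gamma},E_{\eta_\gamma},\theta_{\eta_\gamma})\cong(X_\eta,E_\eta,\theta_\eta)$ and supplies the linearization $d_{T(E)}^*d_{T(E)}$ needed for your implicit-function-theorem step (Proposition \ref{implicit fun}). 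Your closing paragraph locates the difficulty in Sobolev multiplication and elliptic estimates, but those are comparatively routine here; the substantive missing step is the construction and first-order computation of the geometric action of $L^0$.
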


\subsection*{Acknowledgement} 
The author would like to express his gratitude to his supervisor Hisashi Kasuya for his encouragement and patience. The author thanks Masataka Iwai and Takahiro Saito for their kindness and encouragement.   

\section{Deformation of holomorphic-Higgs pair}

\begin{definition}
Let $X$ be a compact complex manifold. Let $\overline{\partial}_{\mathrm{End}(E)}$ be the complex structure on $\mathrm{End}(E)$ induced by $E$. A Higgs bundle $(E,\theta)$ over $X$ is a pair such that, 
\begin{itemize}
      \item$E$ is a holomorphic bundle over $X$.
      \item$\theta$ is a Higgs filed that $\theta\in A^{1,0}(\mathrm{End}(E)),\overline{\partial}_{\mathrm{End}(E)}\theta=0,\theta\land\theta=0$.
\end{itemize}
We call a pair $(X, E,\theta)$ a holomorphic-Higgs pair.
\end{definition}
We fix a metric $K$ on $E$ and assume $X$ to be compact throughout this paper.
\begin{definition}
Let $(X, E,\theta)$ be a holomorphic-Higgs pair. A family of deformation of holomorphic-Higgs pair over a small ball $\Delta$ centered at the origin of $\mathbb{C}^{d}$, consists of a complex manifold $\mathcal{X}$, a proper submersive holomorphic map
$$\pi : \mathcal{X}\to\Delta$$
and a Higgs Bundle $(\mathcal{E},\Theta)$ over $\mathcal{X}$ such that, $\pi^{-1}(0)=X$, $\mathcal{E}|_{\pi^{-1}(0)}=E$, $\Theta|_{\pi^{-1}(0)}=\theta$.
\end{definition}

By Ehresmann's theorem and as in \cite[Chapter 7 Lemma 7.1]{Kod}, if we choose $\Delta$ small enough, we have maps $F: X\times\Delta\to\mathcal{X}$ and $P: E\times\Delta\to\mathcal{E}$ such that the all diagrams below commutes, $F$ is a diffeomorphism and $P$ is a smooth bundle isomorphism. 
\[
\begin{diagram}
    \node{E\times\Delta} \arrow{e,t}{P} \arrow{s} \node{\mathcal{E}} \arrow{s} \\
    \node{X\times\Delta} \arrow{e,t}{F}\arrow{s} \node{\mathcal{X}}\arrow{sw}\\
    \node[1]{\Delta}
\end{diagram}
\]
We can induce a complex structure on $X\times\{t\}$ and a Higgs bundle structure on $E\times\{t\}$ using $F|_{X\times\{t\}}:X\times\{t\}\to\mathcal{X}_{t}=\pi^{-1}(t)$ and $P|_{E\times\{t\}}:E\times\{t\}\to\mathcal{E}|_{\pi^{-1}(t)}$. We denote this family of holomorphic-Higgs pair $\{(X_{t},E_{t},\theta_{t})\}_{t\in\Delta}$.\par
  Since $\{X_{t}\}_{t\in\Delta}$ is a deformation of the complex manifold $X$, we have a family of Maurer-Cartan element $\{\phi_{t}\}_{t\in\Delta}$ such that each $\phi_{t}$ determines the complex structure of $X_{t}$. \par
  Let $A^{1,0}(X_{t}):=\{\alpha\in A^{1} ~|~ \alpha$ is a (1,0)-form of $X_{t}$$\}$ and $\pi_{X}^{(1.0)}:A^1(X)\to A^{(1,0)}(X)$, $\pi_{X}^{(0,1)}:A^{1}(X)\to A^{(0,1)}(X)$ be the natural projection. 
 
 \begin{lemma}\label{1,0-form}
 $\alpha\in A^{1,0}(X_{t})$ if and only if   $\pi_{X}^{(0.1)}(\alpha)=\phi_{t} \lrcorner\pi_{X}^{(1.0)}(\alpha)$
 \end{lemma}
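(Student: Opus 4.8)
The plan is to characterise the $(1,0)$-forms of $X_t$ dually, as exactly those $1$-forms that annihilate the antiholomorphic tangent bundle $T^{0,1}_{X_t}$, and then to rewrite that annihilation condition in terms of the fixed reference structure on $X=X_0$ by means of the pairing between forms and vectors. Recall first the sense in which $\phi_t$ determines $X_t$: viewing $\phi_t\in A^{0,1}(TX)$ as a bundle map $\phi_t:T^{0,1}_X\to T^{1,0}_X$, the antiholomorphic tangent bundle of $X_t$ is the graph
\begin{equation*}
T^{0,1}_{X_t}=\{\,\bar v-\phi_t(\bar v)\ :\ \bar v\in T^{0,1}_X\,\},
\end{equation*}
the sign being fixed so that the kernel of $\overline\partial+l_{\phi_t}$ on functions consists of the $X_t$-holomorphic functions, as in the Introduction. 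For $|t|\ll1$ this is a genuine graph, so $\bar v\mapsto \bar v-\phi_t(\bar v)$ is a bundle isomorphism $T^{0,1}_X\xrightarrow{\ \sim\ }T^{0,1}_{X_t}$.

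A smooth $1$-form $\alpha$ is of type $(1,0)$ for $X_t$ precisely when it vanishes on $T^{0,1}_{X_t}$, i.e.\ when $\iota_{\bar w}\alpha=0$ for every $\bar w\in T^{0,1}_{X_t}$; by the isomorphism above this is equivalent to $\iota_{\bar v-\phi_t(\bar v)}\,\alpha=0$ for every $\bar v\in T^{0,1}_X$. I would then split $\alpha=\pi_X^{(1,0)}(\alpha)+\pi_X^{(0,1)}(\alpha)$ with respect to the reference structure and expand. Since $\pi_X^{(1,0)}(\alpha)$ kills $T^{0,1}_X$, since $\pi_X^{(0,1)}(\alpha)$ kills $T^{1,0}_X$, and since $\phi_t(\bar v)\in T^{1,0}_X$, the condition reduces to
\begin{equation*}
\iota_{\bar v}\pi_X^{(0,1)}(\alpha)-\iota_{\phi_t(\bar v)}\pi_X^{(1,0)}(\alpha)=0 .
\end{equation*}

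The one point needing care is the contraction duality $\iota_{\phi_t(\bar v)}\pi_X^{(1,0)}(\alpha)=\iota_{\bar v}\big(\phi_t\lrcorner\pi_X^{(1,0)}(\alpha)\big)$, which relates the two uses of $\phi_t$ (as a map on vectors, and as the operator $\phi_t\lrcorner$ on forms appearing in the statement). I would verify it in a local holomorphic frame: writing $\phi_t=\phi^j_{\bar k}\,d\bar z^k\otimes\partial_j$ and $\pi_X^{(1,0)}(\alpha)=\alpha_m\,dz^m$, both sides pair $\bar v$ against the $(0,1)$-form $\alpha_j\phi^j_{\bar k}\,d\bar z^k$. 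Substituting this identity, the condition becomes $\iota_{\bar v}\big(\pi_X^{(0,1)}(\alpha)-\phi_t\lrcorner\pi_X^{(1,0)}(\alpha)\big)=0$ for all $\bar v\in T^{0,1}_X$. As both terms inside are $(0,1)$-forms and $\bar v$ ranges over all of $T^{0,1}_X$, this holds if and only if $\pi_X^{(0,1)}(\alpha)=\phi_t\lrcorner\pi_X^{(1,0)}(\alpha)$, which is the assertion.

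Since every step is an equivalence, both directions of the lemma are obtained simultaneously. The argument contains no analytic difficulty; the only genuine obstacle is purely one of bookkeeping, namely fixing the sign conventions for the graph of $\phi_t$ and for the interior products consistently, so that the final identity carries the $+$ sign exactly as stated and matches the convention under which $\phi_t$ was declared to determine the complex structure of $X_t$.
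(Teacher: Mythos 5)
Your argument is correct, and it takes a genuinely different route from the paper's. You characterize $A^{1,0}(X_t)$ dually as the annihilator of $T^{0,1}_{X_t}$, identify $T^{0,1}_{X_t}$ with the graph $\{\bar v-\phi_t(\bar v):\bar v\in T^{0,1}_X\}$, and reduce everything to the pointwise duality $\iota_{\phi_t(\bar v)}\pi_X^{(1,0)}(\alpha)=\iota_{\bar v}\bigl(\phi_t\lrcorner\pi_X^{(1,0)}(\alpha)\bigr)$; since each step is an equivalence, both implications come out simultaneously. Your sign convention for the graph is the right one: it matches the paper's matrix formula $(\phi^i_{t,j})=\bigl(\partial\xi_i/\partial z_k\bigr)^{-1}\bigl(\partial\xi_k/\partial\bar z_j\bigr)$, under which $\partial/\partial\bar z_l-\sum_k\phi^k_{t,l}\,\partial/\partial z_k$ annihilates the $d\xi_i$. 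The paper instead computes entirely in adapted coordinates: it writes $\alpha=\sum_i f_i\,d\xi_i$ in $X_t$-holomorphic coordinates and verifies the identity directly for the forward direction, and for the converse it separately shows that an $X_t$-$(0,1)$-form satisfying the relation must vanish, using the nonvanishing of $\det\bigl(\partial\bar\xi_i/\partial\bar z_k-\sum_j(\partial\bar\xi_i/\partial z_j)\phi^j_{t,k}\bigr)$ --- which is exactly the coordinate incarnation of your ``genuine graph for $|t|\ll1$'' hypothesis, so the two proofs rest on the same nondegeneracy. Your version is shorter and more conceptual and handles the converse for free; the paper's explicit computation has the side benefit of setting up the coordinate formulas that are immediately reused in the proof of the next lemma on holomorphic $1$-forms.
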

\begin{proof}
It is enough to prove it locally. Let $x\in X$ and $U_x$ be an open neighborhood of $x$. Let $(\xi_1,\dots,\xi_n),(z_1,\dots,z_n)$ be local coordinates on $U_x$ and $(\xi_1,\dots,\xi_n)$ be a complex coordinate for $X_t$ and $(z_1,\dots,z_n)$ be a complex coordinate for $X$.\par
Let $\alpha = \sum_i f_i d\xi_i$. We have $\pi_X^{0,1}(\alpha)=\sum_{i,j}f_i \dfrac{\partial\xi_i}{\partial \overline z_j}d\overline z_j$ and $\pi_X^{1,0}(\alpha)=\sum_{i,j}f_i \dfrac{\partial\xi_i}{\partial z_j}dz_j$.\par
Recall that $\phi_t=\sum_{i,j} \phi^i_{t,j}\dfrac{\partial}{\partial z_i}\otimes d\overline z_j,(\phi^i_{t,j})=\biggl(\dfrac{\partial\xi_i}{\partial z_k}\biggr)^{-1}\biggl(\dfrac{\partial \xi_k}{\partial\overline z_j}\biggr).$ See for more details.\par
Hence,
\begin{equation*}
\phi_t\lrcorner\pi^{1,0}_X(\alpha)=\biggl(\sum_{j,k}\phi^j_{t,k}\dfrac{\partial}{\partial z_j}\otimes d\overline z_k\biggr)\lrcorner\biggl(\sum_{i,j}f_i \dfrac{\partial\xi_i}{\partial z_j}dz_j\biggr)=\sum_{i,j,k}f_i \dfrac{\partial\xi_i}{\partial z_j}\phi^j_{t,k}d\overline z_k=\sum_{i,k}f_i\dfrac{\partial\xi_i}{\partial\overline z_k}d\overline z_k=\pi_X^{0,1}(\alpha).
\end{equation*}

To prove the converse, we only have to prove that if $\omega\in A_{X_t}^{0,1}$ and $\pi_{X}^{(0.1)}(\omega)=\phi_{t} \lrcorner\pi_{X}^{(1.0)}(\omega)$ stands, $\omega=0$.
Let $\omega=\sum_i h_i d\overline \xi_i$ and assume $\pi_{X}^{(0.1)}(\omega)=\phi_{t} \lrcorner\pi_{X}^{(1.0)}(\omega)$. We have $\pi_X^{0,1}(\omega)=\sum_{i,j}h_i\dfrac{\partial \overline \xi_i}{\partial \overline z_j}d\overline z_j$ and $\phi_t\lrcorner\pi^{1,0}_X(\omega)=\sum_{i,j,k}h_i \dfrac{\partial\overline\xi_i}{\partial z_j}\phi^j_{t,k}d\overline z_k$. Since $\pi_{X}^{(0.1)}(\omega)=\phi_{t} \lrcorner\pi_{X}^{(1.0)}(\omega)$, we have,
\begin{equation*}
0=\pi_{X}^{(0.1)}(\omega)-\phi_{t} \lrcorner\pi_{X}^{(1.0)}(\omega)=\sum_k\biggl \{\sum_{i}h_i\biggl(\dfrac{\partial\overline\xi_i}{\partial\overline z_k}-\sum_{j}\dfrac{\partial\overline\xi_i}{\partial z_j}\phi^j_{t,k}\biggr)\biggr \} d\overline z_k.
\end{equation*}
Hence,
\begin{equation*}
\biggl(\dfrac{\partial\overline\xi_i}{\partial\overline z_k}-\sum_{j}\dfrac{\partial\overline\xi_i}{\partial z_j}\phi^j_{t,k}\biggr) (h_1,\dots,h_n)^T=0.
\end{equation*}
Since $\phi_t$ defines a near complex structure with respect to the original one,
\begin{equation*}
\mathrm{det}\biggl(\dfrac{\partial\overline\xi_i}{\partial\overline z_k}-\sum_{j}\dfrac{\partial\overline\xi_i}{\partial z_j}\phi^j_{t,k}\biggr)\neq 0.
\end{equation*}
Hence $(h_1,\dots,h_n)=0$. This implies  $\omega=0$.
\end{proof}

 \begin{lemma}\label{hol 1 form}
 Let $\alpha\in A^{1,0}(X_{t})$. $\alpha$ is a holomorphic 1-form on $X_{t}$ if and only if $(\overline{\partial}+l_{\psi_{t}})\pi_{X}^{(1.0)}(\alpha)=0$.
 Here $l_{\psi_{t}}=\partial(\psi_{t} \lrcorner)-\psi_{t} \lrcorner\partial$.
\end{lemma}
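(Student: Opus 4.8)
The plan is to reduce the statement to the scalar case (holomorphy of a function), which is governed directly by Lemma~\ref{1,0-form}, and then to propagate it through a Leibniz rule for the operator $D:=\overline\partial+l_{\phi_t}$ (here $\psi_t=\phi_t$ is the element determining the complex structure of $X_t$). I would work locally on a chart carrying both coordinate systems $(\xi_1,\dots,\xi_n)$, holomorphic for $X_t$, and $(z_1,\dots,z_n)$, holomorphic for $X$, exactly as in the proof of Lemma~\ref{1,0-form}. Since $d\xi_1,\dots,d\xi_n$ is a local frame of $A^{1,0}(X_t)$, I write $\alpha=\sum_i f_i\,d\xi_i$, so that $\overline\partial_t\alpha=\sum_i\overline\partial_t f_i\wedge d\xi_i$ and hence $\alpha$ is holomorphic on $X_t$ if and only if each coefficient $f_i$ is a holomorphic function on $X_t$. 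For the scalar case, applying Lemma~\ref{1,0-form} to the $1$-form $df$ shows that $f$ is holomorphic on $X_t$, i.e. $df\in A^{1,0}(X_t)$, exactly when $\overline\partial f=\phi_t\lrcorner\partial f$; since $\phi_t\lrcorner f=0$ gives $l_{\phi_t}f=-\phi_t\lrcorner\partial f$, this is precisely $Df=0$.

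Next I would establish the two algebraic inputs that let the scalar case control $1$-forms. Set $\beta:=\pi_X^{1,0}(\alpha)=\sum_i f_i\,\partial\xi_i$ and $\beta_i:=\partial\xi_i$. First, $D$ annihilates each $\beta_i$: using $\overline\partial\partial=-\partial\overline\partial$, $\partial^2=0$, and the scalar identity $\overline\partial\xi_i=\phi_t\lrcorner\partial\xi_i$ just proved, one computes $D\partial\xi_i=\overline\partial\partial\xi_i+\partial(\phi_t\lrcorner\partial\xi_i)=-\partial\overline\partial\xi_i+\partial\overline\partial\xi_i=0$. Second, $D$ is an odd derivation from $\oplus_p A^{p,0}(X)$ to $\oplus_p A^{p,1}(X)$: indeed $\overline\partial$ is one, and $l_{\phi_t}=[\partial,\phi_t\lrcorner]$ is the graded commutator of the odd derivation $\partial$ with the even derivation $\phi_t\lrcorner=\sum_k\phi_t^k\wedge\iota_{\partial/\partial z_k}$, hence itself an odd derivation. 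Combining these two facts, the Leibniz rule yields $D\beta=\sum_i\big((Df_i)\wedge\beta_i+f_i\,D\beta_i\big)=\sum_i(Df_i)\wedge\beta_i$.

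It then remains to read off the equivalence from the identity $D\beta=\sum_i(Df_i)\wedge\beta_i$. If $\alpha$ is holomorphic on $X_t$, every $f_i$ is holomorphic on $X_t$, so every $Df_i=0$ by the scalar case and $D\beta=0$. For the converse, write $Df_i=\sum_k g_{ik}\,d\overline z_k$ and $\beta_i=\sum_j(\partial\xi_i/\partial z_j)\,dz_j$, so that $D\beta=\sum_{j,k}\big(\sum_i g_{ik}\,\partial\xi_i/\partial z_j\big)\,d\overline z_k\wedge dz_j$; the vanishing $D\beta=0$ together with the invertibility of $(\partial\xi_i/\partial z_j)$ --- the same non-degeneracy exploited in Lemma~\ref{1,0-form} --- forces every $g_{ik}=0$, hence every $Df_i=0$, hence $\alpha$ holomorphic on $X_t$. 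I expect the main obstacle to be the second paragraph: verifying that $D$ is a derivation and that it kills the coordinate forms $\partial\xi_i$ is exactly where the precise shape of $l_{\phi_t}=\partial(\phi_t\lrcorner)-\phi_t\lrcorner\partial$ matters, and it requires careful bookkeeping of bidegrees and signs. By contrast, the final non-degeneracy step is routine given the computations already made for Lemma~\ref{1,0-form}.
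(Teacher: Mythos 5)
Your proof is correct, but it organizes the computation differently from the paper. The paper proves the lemma by brute force: it expands $\overline\partial\alpha^{1,0}$ and $l_{\phi_t}\alpha^{1,0}$ completely in the coordinates $(z_i)$, cancels six families of terms, and lands on the identity $(\overline\partial+l_{\phi_t})\alpha^{1,0}=\sum_{j,l}\sum_i\tfrac{\partial\xi_i}{\partial z_j}\bigl(\tfrac{\partial f_i}{\partial\overline z_l}-\sum_k\tfrac{\partial f_i}{\partial z_k}\phi^k_{t,l}\bigr)d\overline z_l\wedge dz_j$, from which both implications follow by the invertibility of $(\partial\xi_i/\partial z_j)$. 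You reach exactly the same identity, but structurally: the scalar case $Df=0$ iff $f$ holomorphic on $X_t$ (correctly deduced from Lemma \ref{1,0-form} applied to $df$, using $\phi_t\lrcorner f=0$), the vanishing $D(\partial\xi_i)=0$, and the Leibniz rule for $D=\overline\partial+l_{\phi_t}$ give $D\beta=\sum_i(Df_i)\wedge\partial\xi_i$ with no index gymnastics, and your closing nondegeneracy step is identical to the paper's. The one point to flag is that the derivation property of $l_{\phi_t}$, which carries the whole weight of your argument, is something the paper only asserts later (in the proof that $\overline D_t$ is well defined) and never verifies; your identification of $l_{\phi_t}$ as the graded commutator of $\partial$ with the even derivation $\phi_t\lrcorner=\sum_k\phi_t^k\wedge\iota_{\partial/\partial z_k}$ is the right way to justify it, and once that is accepted your route is shorter and makes the cancellations in the paper's long coordinate computation transparent.
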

\begin{proof}
As in lemma \ref{1,0-form}, we only have to prove it locally. We use the notation we used in the proof of lemma  \ref{1,0-form}.\par
Let  $\alpha = \Sigma_i f_i d\xi_i$ and let $\alpha^{1,0}=\pi_{X}^{(1.0)}(\alpha)$. We first calculate $(\overline{\partial}+l_{\psi_{t}})(\alpha^{1,0}).$ Since $\alpha^{1,0}=\sum_{i,j}f_i\dfrac{\partial\xi_i}{\partial z_j}dz_j$, we have
\begin{equation*}
\overline\partial\alpha^{1,0}=\overline\partial\biggl(\sum_{i,j}f_i\dfrac{\partial\xi_i}{\partial z_j}dz_j\biggr)=\sum_{i,j,k}\bigg\{\dfrac{\partial f_i}{\partial\overline z_k}\dfrac{\partial\xi_i}{\partial z_j}+f_i\dfrac{\partial^2\xi_i}{\partial\overline z_k\partial z_j}\biggr\}d\overline z_k\wedge dz_j
\end{equation*}
\begin{align*}
l_{\phi_t}(\alpha^{1,0})&=l_{\phi_t}\bigg(\sum_{i,j}f_i\dfrac{\partial\xi_i}{\partial z_j}dz_j\bigg)=\partial\biggl(\phi_t\lrcorner\sum_{i,j}f_i\dfrac{\partial\xi_i}{\partial z_j}dz_j\biggr)-\phi_t\lrcorner\biggl\{\sum_{i,j,k}\bigg(\dfrac{\partial f_i}{\partial z_k}\dfrac{\partial\xi_i}{\partial z_j}+f_i\dfrac{\partial^2\xi_i}{\partial z_k\partial z_j}\biggr)dz_k\wedge dz_j\biggr\}\\
&=\partial\biggl(\sum_{i,j,k}f_i\dfrac{\partial\xi_i}{\partial z_j}\phi^j_{t,k}d\overline z_k\biggr)-\phi_t\lrcorner\biggl\{\sum_{i,j,k}\bigg(\dfrac{\partial f_i}{\partial z_k}\dfrac{\partial\xi_i}{\partial z_j}\bigg)dz_k\wedge dz_j\bigg\}\bigg.
\end{align*}
\textrm{and}
\begin{align*}
\partial\biggl(\sum_{i,j,k}f_i\dfrac{\partial\xi_i}{\partial z_j}\phi^j_{t,k}d\overline z_k\biggr)=\partial\bigg(\sum_{i,k}f_i\dfrac{\partial\xi_i}{\partial\overline z_k}d\overline z_k\bigg)=\sum_{i,j,k}\dfrac{\partial f_i}{\partial z_j}\dfrac{\partial \xi_i}{\partial\overline z_k}dz_j\wedge d\overline z_k+\sum_{i,j,k}f_i\dfrac{\partial^2\xi_i}{\partial\overline z_k\partial z_j}dz_j\wedge d\overline z_k
\end{align*}
\begin{align*}
&=\sum_{i,j,k}\dfrac{\partial f_i}{\partial z_k}\dfrac{\partial\xi_i}{\partial z_j}\sum_l\phi^k_{t,l}d\overline z_l\wedge dz_j-\sum_{i,j,k}\dfrac{\partial f_i}{\partial z_k}\dfrac{\partial\xi_i}{\partial z_j}\sum_l\phi^j_{t,l}d\overline z_l\wedge dz_k\\
&=\sum_{i,j,k,l}\dfrac{\partial f_i}{\partial z_k}\dfrac{\partial\xi_i}{\partial z_j}\phi^k_{t,l}d\overline z_l\wedge dz_j-\sum_{i,k,l}\dfrac{\partial f_i}{\partial z_k}\dfrac{\partial\xi_i}{\partial\overline z_l}d\overline z_l\wedge dz_k.
\end{align*}
Hence,
\begin{align}\label{dbar alpha}
(\overline{\partial}+l_{\phi_{t}})(\alpha^{1,0})&=\notag\sum_{i,j,k}\dfrac{\partial f_i}{\partial\overline z_k}\dfrac{\partial\xi_i}{\partial z_j}d\overline z_k\wedge dz_j+\sum_{i,j,k}f_i\dfrac{\partial^2\xi_i}{\partial\overline z_k\partial z_j}d\overline z_k\wedge dz_j\\
&\notag+\sum_{i,j,k}\dfrac{\partial f_i}{\partial z_j}\dfrac{\partial \xi_i}{\partial\overline z_k}dz_j\wedge d\overline z_k+\sum_{i,j,k}f_i\dfrac{\partial^2\xi_i}{\partial\overline z_k\partial z_j}dz_j\wedge d\overline z_k\\
&\notag-\sum_{i,j,k,l}\dfrac{\partial f_i}{\partial z_k}\dfrac{\partial\xi_i}{\partial z_j}\phi^k_{t,l}d\overline z_l\wedge dz_j+\sum_{i,k,l}\dfrac{\partial f_i}{\partial z_k}\dfrac{\partial\xi_i}{\partial\overline z_l}d\overline z_l\wedge dz_k\\
&\notag=\sum_{i,j,l}\dfrac{\partial f_i}{\partial\overline z_l}\dfrac{\partial\xi_i}{\partial z_j}d\overline z_l\wedge dz_j-\sum_{i,j,k,l}\dfrac{\partial f_i}{\partial z_k}\dfrac{\partial\xi_i}{\partial z_j}\phi^k_{t,l}d\overline z_l\wedge dz_j\\
&=\sum_{l,l}\sum_{i}\dfrac{\partial\xi_i}{\partial z_j}\bigg(\dfrac{\partial f_i}{\partial\overline z_l}-\sum_{k}\dfrac{\partial f_i}{\partial z_k}\phi^k_{t.l}\bigg)d\overline z_l\wedge dz_j.
\end{align}
If we assume $\alpha$ to be a holomorphic 1 form on $X_t$, this implies that $\{f_i\}_i$ are holomorphic functions on $X_t$. Hence we have,
\begin{equation*}
 \dfrac{\partial f_i}{\partial\overline z_l}-\sum\dfrac{\partial f_i}{\partial z_k}{\phi^k_{t,l}}=0.
\end{equation*}
Hence by $(\ref{dbar alpha})$, when $\alpha$ is a holomorphic 1 form on $X_t$, ($\overline{\partial}+l_{\phi_{t}})(\alpha^{1,0})=0$.\par
Conversely, if we assume ($\overline{\partial}+l_{\phi_{t}})(\alpha^{1,0})=0$, by $(\ref{dbar alpha})$ we have 
\begin{equation*}
\bigg(\dfrac{\partial\xi_i}{\partial z_j}\bigg)\bigg(\dfrac{\partial f_i}{\partial\overline z_l}-\sum_{k}\dfrac{\partial f_i}{\partial z_k}\phi^k_{t.l}\bigg)=0.
\end{equation*}
Since $\phi_t$ defines a near complex structure to $X$, we have $\mathrm{det}\bigg(\dfrac{\partial\xi_i}{\partial z_j}\bigg)\neq0$.
 Hence $\bigg(\dfrac{\partial f_i}{\partial\overline z_l}-\sum_{k}\dfrac{\partial f_i}{\partial z_k}\phi^k_{t,l}\bigg)=0.$ This shows that $\{f_i\}$  are holomorphic function on $X_t$ and $\alpha$ is a holomorphic 1 form on $X_t$.
\end{proof}
 By Lemma \ref{1,0-form}, $\theta_{t}$ can be decomposed as $\theta_{t}=\omega_{t}+\phi_{t}\lrcorner \omega_{t}$, where $\omega_{t}=\pi_{X}^{(1.0)}(\theta_{t})$. 
 We define an operator $\overline D_{t}: A(E)\to A^1(E)$ as follows.
 $$s\in A(E), \overline D_{t}(s)=\overline D_{t}(s^ke_k):=(\partial + l_{\phi_{t}})s^k\otimes e_k + \omega_{t}\wedge s$$
Here, $\{e_k\}$ is a local holomorphic frame of $E_{t}$ and we used the Einstein summation rule. 
\begin{proposition}
$\overline D_{t}$ is a well defined operator, that is, $\overline D_{t}$  is independent of the holomorphic frame of $E_{t}$.
Also $\overline D_{t}$ satisfies the Leibniz rule. :
      \begin{center}
       $\overline D_{t}(\alpha \wedge s)=(\overline{\partial}+l_{\phi_{t}})\alpha\otimes s + (-1)^{p}\alpha\wedge\overline D_{t}(s)$
      \end{center}
      for every $\alpha\in A^p(X)$ and $s\in A(E)$.
\end{proposition}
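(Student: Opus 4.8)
The plan is to reduce both assertions to a single structural fact: the first-order operator $\overline\partial+l_{\phi_t}$ is a graded derivation of degree $1$ on the full algebra $A^*(X)$, i.e.
\[
(\overline\partial+l_{\phi_t})(\beta\wedge\gamma)=(\overline\partial+l_{\phi_t})\beta\wedge\gamma+(-1)^{|\beta|}\beta\wedge(\overline\partial+l_{\phi_t})\gamma .
\]
Once this is available, well-definedness and the Leibniz rule are both short formal consequences.

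To establish the derivation property I would write $l_{\phi_t}$ as the graded commutator $l_{\phi_t}=[\partial,\phi_t\lrcorner]=\partial\circ(\phi_t\lrcorner)-(\phi_t\lrcorner)\circ\partial$. The contraction $\phi_t\lrcorner$ sends $A^{p,q}$ to $A^{p-1,q+1}$, so it has total form-degree $0$; testing on a decomposable $\phi_t=v\otimes\bar\omega$, with $\phi_t\lrcorner\beta=\bar\omega\wedge\iota_v\beta$, the odd-derivation property of $\iota_v$ together with commuting the $1$-form $\bar\omega$ past $\beta$ shows that $\phi_t\lrcorner$ is an (even) derivation. Since $\partial$ is a derivation of odd degree $1$, the graded commutator of a degree-$1$ and a degree-$0$ derivation is again a derivation, of degree $1$; adding the derivation $\overline\partial$ gives the displayed identity. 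This sign bookkeeping is the step I expect to require the most care; everything else is formal.

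For well-definedness, take two holomorphic frames of $E_t$ on an overlap, $e'_k=g^l_k e_l$, so the transition functions $g^l_k$ are holomorphic on $X_t$, and write $s=s^k e_k={s'}^k e'_k$, whence $s^l=g^l_k {s'}^k$. Applying the derivation property to the product of the $0$-forms $g^l_k$ and ${s'}^k$ gives $(\overline\partial+l_{\phi_t})s^l=((\overline\partial+l_{\phi_t})g^l_k)\,{s'}^k+g^l_k(\overline\partial+l_{\phi_t}){s'}^k$. Because $l_{\phi_t}h=-\phi_t\lrcorner\partial h$ on a function $h$, applying Lemma \ref{1,0-form} to $dg^l_k$ shows that a function is holomorphic on $X_t$ exactly when it is killed by $\overline\partial+l_{\phi_t}$; hence $(\overline\partial+l_{\phi_t})g^l_k=0$, the first term vanishes, and reindexing yields $(\overline\partial+l_{\phi_t})s^l\otimes e_l=(\overline\partial+l_{\phi_t}){s'}^k\otimes e'_k$. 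Thus the component part of $\overline{D}_t$ is frame-independent, and since $\omega_t\wedge s$ refers to no frame, $\overline{D}_t$ is well defined.

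For the Leibniz rule I would write $s=s^k e_k$ and $\alpha\wedge s=(s^k\alpha)\otimes e_k$, and apply to it the local formula defining $\overline{D}_t$ (extended verbatim from sections to $A^*(E)$). Expanding $(\overline\partial+l_{\phi_t})(s^k\alpha)$ by the derivation property produces exactly the two component terms $s^k(\overline\partial+l_{\phi_t})\alpha$ and $(-1)^p\alpha\wedge(\overline\partial+l_{\phi_t})s^k$, the sign being forced by commuting the degree-$1$ operator past the $p$-form $\alpha$. For the Higgs part one uses that $\omega_t$ has degree $1$, so $\omega_t\wedge(\alpha\wedge s)=(-1)^p\alpha\wedge(\omega_t\wedge s)$; assembling the pieces matches $(\overline\partial+l_{\phi_t})\alpha\otimes s+(-1)^p\alpha\wedge\overline{D}_t(s)$ term by term. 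So the same derivation property underlies both statements, the only genuine input from the deformation being the kernel characterisation of holomorphic functions on $X_t$ supplied by Lemma \ref{1,0-form}.
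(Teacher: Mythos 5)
Your proposal is correct and follows essentially the same route as the paper: frame-independence via the vanishing of $(\overline\partial+l_{\phi_t})$ on holomorphic transition functions of $X_t$, and the Leibniz rule via graded commutativity of the wedge product together with the derivation property of $\overline\partial+l_{\phi_t}$. The only difference is one of thoroughness: you actually prove that $\overline\partial+l_{\phi_t}=[\partial,\phi_t\lrcorner]+\overline\partial$ is a degree-one graded derivation and derive the kernel characterisation of holomorphic functions from Lemma \ref{1,0-form}, whereas the paper simply invokes both facts.
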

\begin{proof}
To prove well-definedness, we need to show that $\overline D_{t}$ is independent of the choice of a local holomorphic frame $\{e_{k}\}$ of $E_{t}.$ Take an another local holomorphic frame $\{f_{j}\}$ of $E_{t}$. Let $h^{j}_{k}$ be a holomorphic functions of $X_{t}$ such that $f_{j}=h^{k}_{j}e_{j}$. Then for local section $s\in A(E), s=\widetilde{s}^{j} f_{j}=s^k e_{k}$, we have $\widetilde{s}_{j}h^{k}_{j}=s_{k}$, thus we have 
\begin{equation*}
\begin{split}
\overline D_{t}(\widetilde{s}^{j}f_{j})&=(\overline\partial + l_{\phi_{t}})\widetilde{s}^{j}\otimes f_{j} + \omega_{t}\wedge(\widetilde{s}^{j}f_{j})\\
&=(\overline\partial + l_{\phi_{t}})\widetilde{s}^{j}\otimes h^k_j e_{k} +\omega_{t}\wedge s \\
&=(\overline\partial + l_{\phi_{t}})(\widetilde{s}^{j}h^{k}_{j})\otimes e_{k} + \omega_{t}\wedge (s^k e_k)\\
&=(\overline\partial + l_{\phi_{t}})(s^k)\otimes e_k +\omega_{t}(s^k e_k)\\
&=\overline D_{t}(s^k e_{k})
\end{split}
\end{equation*}
Hence $\overline D_{t}$ is well defined.\par
The Leibneiz rule for $\overline D_t$ follows from the fact that  $\alpha\in A^p(X),\beta\in A^q(X),\alpha\wedge\beta=(-1)^{pq}\beta\wedge\alpha$ stands and $\overline\partial + l_{\phi_{t}}$ satisfies the Leibniz rule.:
\begin{center}
$(\overline\partial + l_{\phi_{t}})(\alpha\wedge\beta)=(\overline\partial + l_{\phi_{t}})(\alpha)\wedge\beta + (-1)^p\alpha\wedge(\overline\partial + l_{\phi_{t}})(\beta).$
\end{center}
\end{proof}

\begin{proposition}
${\overline D_{t}}^{2}=0.$
\end{proposition}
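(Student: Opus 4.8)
The plan is to recognize $\overline D_{t}$ as the geometric avatar of the Higgs--Dolbeault operator $\overline\partial_{E_{t}}+\theta_{t}$ of the genuine Higgs bundle $(E_{t},\theta_{t})$ on $X_{t}$, and to show that $\overline D_{t}^{2}=0$ because each of the three defining properties of a Higgs bundle accounts for the vanishing of one bidegree component of the square: integrability of $X_{t}$ will kill the $(0,2)$-part, holomorphicity $\overline\partial_{E_{t}}\theta_{t}=0$ the $(1,1)$-part, and the Higgs condition $\theta_{t}\wedge\theta_{t}=0$ the $(2,0)$-part.

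First I would reduce the claim to a computation on a frame. Applying the Leibniz rule just proved to a function $f$ and a section $s$, a short manipulation gives $\overline D_{t}^{2}(fs)=(\overline\partial+l_{\phi_{t}})^{2}f\otimes s+f\,\overline D_{t}^{2}s$, so $\overline D_{t}^{2}$ is $A^{0}(X)$-linear --- a genuine $\mathrm{End}(E)$-valued $2$-form --- as soon as $(\overline\partial+l_{\phi_{t}})^{2}=0$ on functions. The latter is exactly the integrability of the almost complex structure determined by $\phi_{t}$ (the Maurer--Cartan condition), which holds because $X_{t}$ is an honest complex manifold. Once tensoriality is established it suffices to evaluate $\overline D_{t}^{2}$ on a local holomorphic frame $\{e_{k}\}$ of $E_{t}$.

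For the frame computation, since the coefficients of $e_{k}$ in its own frame are constant we have $\overline D_{t}e_{k}=\omega_{t}\wedge e_{k}$. Writing $\omega_{t}\wedge e_{l}=\sum_{m}(\omega_{t})^{m}_{l}\otimes e_{m}$ with scalar $(1,0)$-forms $(\omega_{t})^{m}_{l}=\pi_{X}^{1,0}((\theta_{t})^{m}_{l})$ and applying $\overline D_{t}$ once more through the Leibniz rule, $\overline D_{t}^{2}e_{k}$ splits by bidegree into a $(1,1)$-part $\sum_{l}(\overline\partial+l_{\phi_{t}})(\omega_{t})^{l}_{k}\otimes e_{l}$ and a $(2,0)$-part equal to $\pm(\omega_{t}\wedge\omega_{t})(e_{k})$; these live in different bidegrees on $X$ and so must vanish separately. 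The $(1,1)$-part vanishes by Lemma~\ref{hol 1 form}: in a holomorphic frame $\overline\partial_{E_{t}}\theta_{t}=0$ says each entry $(\theta_{t})^{l}_{k}$ is a holomorphic $1$-form on $X_{t}$, whence $(\overline\partial+l_{\phi_{t}})(\omega_{t})^{l}_{k}=(\overline\partial+l_{\phi_{t}})\pi_{X}^{1,0}((\theta_{t})^{l}_{k})=0$. The $(2,0)$-part vanishes from $\theta_{t}\wedge\theta_{t}=0$: decomposing $\theta_{t}=\omega_{t}+\phi_{t}\lrcorner\omega_{t}$ into its $(1,0)_{X}$- and $(0,1)_{X}$-parts and expanding the product, the $(2,0)_{X}$-component of $\theta_{t}\wedge\theta_{t}$ is precisely $\omega_{t}\wedge\omega_{t}$, so $\omega_{t}\wedge\omega_{t}=0$. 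Thus $\overline D_{t}^{2}e_{k}=0$, and tensoriality gives $\overline D_{t}^{2}=0$.

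The main obstacle is the bookkeeping that legitimizes the bidegree separation: one must verify that $(\overline\partial+l_{\phi_{t}})^{2}=0$ on functions (equivalently that $\phi_{t}$ is integrable) to obtain tensoriality, and one must correctly translate the frame-wise meaning of $\overline\partial_{E_{t}}\theta_{t}=0$ into the hypothesis of Lemma~\ref{hol 1 form} so that it applies to the entries $(\theta_{t})^{l}_{k}$. The wedge-and-composition sign conventions entering $\omega_{t}\wedge\omega_{t}$ also require care, though they do not affect the conclusion since the term vanishes outright.
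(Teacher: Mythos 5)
Your proof is correct and follows essentially the same route as the paper: both reduce via the Leibniz rule to showing $(\overline\partial+l_{\phi_t})^2=0$ (from the Maurer--Cartan equation for $\phi_t$) together with the vanishing of $\overline D_t^2$ on a local holomorphic frame of $E_t$, where Lemma~\ref{hol 1 form} kills the term coming from $\overline\partial_{E_t}\theta_t=0$ and $\omega_t\wedge\omega_t=0$ (extracted from $\theta_t\wedge\theta_t=0$) kills the rest. Your explicit tensoriality step and bidegree bookkeeping are a slightly cleaner organization of the same computation.
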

\begin{proof}
We calculate ${\overline D_{t}}^{2}$ locally and show ${\overline D_{t}}^{2}=0.$ Since $\overline D_t$ satisfies the Leibniz rule, we only have to prove $(\overline\partial+l_{\phi_t})^2= 0$ and ${\overline D_{t}}^{2}(s)=0$ for $s\in A(E).$
\par
 First we prove $(\overline\partial+l_{\phi_t})^2=0.$ According to \cite{M2}, we have,
 \begin{equation*}\label{square}
 (\overline\partial+l_{\phi_t})^2=l_{\overline\partial_{TX}\phi_t-\frac{1}{2}[\phi_t,\phi_t]}.
 \end{equation*}
 Since $\phi_t$ is a Maurer-Cartan element, we have $\overline\partial_{TX}\phi_t-\dfrac{1}{2}[\phi_t,\phi_t]=0.$ Hence $(\overline\partial+l_{\phi_t})^2$=0.
 \par
 Next we prove ${\overline D_{t}}^{2}(s)=0$ for $s\in A(E).$ Let  $\{e_k\}$ be a holomorphic frame for $E_t.$ Assume that   $s$ and $\omega_t$ has a trivialization as $s=s^ke_k$ and  $\omega_t=g_idz_i,g_i=(a^s_{i,t})$ respect to the frame $\{e_k\}.$ Here  $s^k,a^s_{i,t}\in A(X)$ and $g_i\in A(EndE).$ Since $\omega_t=\pi_X^{1,0}(\theta_t)$  and $\theta_t$ is a Higgs field we have $\omega_t\wedge\omega_t=0.$ Applying lemma \ref{hol 1 form} and the fact that $\overline D_t$ satisfies the Leibniz rule, we have,
\begin{equation*}
\begin{split}
\overline D_t^2( s)=\overline D_t^2( s^k\otimes e_k )&=\overline D_t((\overline\partial+l_{\phi_t})s^k\otimes e_k+\omega_t\wedge s)\\
&=(\overline\partial+l_{\phi_t})^2s^k\otimes e_k+\omega_t\wedge(\overline\partial+l_{\phi_t})s^k\otimes e_k+(\overline\partial+l_{\phi_t})(a^s_{i,k}s^kdz_i)\otimes e_s+\omega_t\wedge\omega_t\wedge s\\
&=\omega_t\wedge(\overline\partial+l_{\phi_t})s^k\otimes e_k+(\overline\partial+l_{\phi_t})(a^s_{i,k}dz_i)\wedge s^k\otimes e_s-\omega_t\wedge(\overline\partial+l_{\phi_t})s^k\otimes e_k+\omega_t\wedge\omega_t\wedge s\\
&=0.
\end{split}
\end{equation*}
Since $s\in A(E)$ is an arbitrary smooth section, this proves the claim.
\end{proof}

\begin{proposition}
We define $A_{t}$ as $A_{t}:=\overline D_{t}-\overline \partial_{E}-\{\partial_{K},\phi_{t}\}-\theta$ then $A_{t}\in A^{1}(\mathrm{End}(E)).$ Here $\partial_K$ is a (1,0)-part of the Chern connection which is uniquely determined by $\overline\partial_E$ and the Hermitian metric $K$. $\{\partial_K,\phi_t\lrcorner\}$ is the operator such that $\{\partial_K,\phi_t\lrcorner\}=\partial_K(\phi_t\lrcorner)-\phi_t\lrcorner\partial_K$.
\end{proposition}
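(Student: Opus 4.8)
The plan is to show that $A_t$, although written as a difference of first-order differential operators on $A(E)$, is in fact a zeroth-order (i.e.\ $C^\infty(X)$-linear) operator, and therefore is represented by a section of $\mathrm{Hom}(E,\Lambda^1\otimes E)\cong\Lambda^1\otimes\mathrm{End}(E)=A^1(\mathrm{End}(E))$. Concretely, I would verify that $A_t(fs)=f\,A_t(s)$ for every $f\in C^\infty(X)$ and every $s\in A(E)$: once the terms carrying a derivative of $f$ are shown to cancel, the standard identification of tensorial operators $A^0(E)\to A^1(E)$ with $\mathrm{End}(E)$-valued $1$-forms completes the proof.

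First I would record the Leibniz rule of each of the four constituent operators on a product $fs$. For $\overline D_t$, the Leibniz rule proved in the earlier proposition with $p=0$ gives $\overline D_t(fs)=(\overline\partial+l_{\phi_t})f\cdot s+f\,\overline D_t(s)$; and since $f$ is a $0$-form we have $\phi_t\lrcorner f=0$, so $l_{\phi_t}f=-\phi_t\lrcorner\partial f$ and hence $(\overline\partial+l_{\phi_t})f=\overline\partial f-\phi_t\lrcorner\partial f$. For $\overline\partial_E$ one simply has $\overline\partial_E(fs)=\overline\partial f\cdot s+f\,\overline\partial_E s$. For $\{\partial_K,\phi_t\lrcorner\}$ I would note that on form-degree $0$ the summand $\partial_K(\phi_t\lrcorner\,\cdot)$ vanishes (again because $\phi_t\lrcorner s=0$ for $s$ of degree $0$), so this operator reduces to $-\phi_t\lrcorner\partial_K$ on $A(E)$; combining $\partial_K(fs)=\partial f\cdot s+f\,\partial_K s$ with the fact that $\phi_t\lrcorner$ acts only on the form factor yields $\{\partial_K,\phi_t\lrcorner\}(fs)=-(\phi_t\lrcorner\partial f)\,s+f\{\partial_K,\phi_t\lrcorner\}s$. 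Finally $\theta$ acts by wedging, so $\theta\wedge(fs)=f\,\theta\wedge s$ and contributes no derivative of $f$.

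Collecting in $A_t(fs)$ exactly the terms that carry a derivative of $f$, I would exhibit the cancellation: the $\overline\partial f\cdot s$ produced by $\overline D_t$ is annihilated by the $-\overline\partial f\cdot s$ from $-\overline\partial_E$, while the $-(\phi_t\lrcorner\partial f)\,s$ produced by $\overline D_t$ is annihilated by the $+(\phi_t\lrcorner\partial f)\,s$ coming from $-\{\partial_K,\phi_t\lrcorner\}$. Thus all first-order terms drop out, leaving $A_t(fs)=f\,A_t(s)$, so $A_t$ is tensorial and defines an element of $A^1(\mathrm{End}(E))$. (Its $(0,1)$-part absorbs the $\omega_t$-versus-$\theta$ discrepancy and its $(1,0)$-part the difference between the two zeroth-order endomorphism factors, but these do not affect the argument.)

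The individual Leibniz computations are routine; the only step demanding care is the matching of the two contraction terms. One must confirm that the $\phi_t\lrcorner\partial f$ hidden inside $(\overline\partial+l_{\phi_t})f$ is exactly the one produced by $-\{\partial_K,\phi_t\lrcorner\}$ acting on $fs$, with the correct sign coming from the anticommutator convention, together with the observation that on functions $\partial_K$ reduces to the ordinary $\partial$ (the two differ only by the zeroth-order $(1,0)$ connection form, which affects the $\mathrm{End}(E)$-valued part but not the symbol). Tracking these signs is the crux; everything else is bookkeeping.
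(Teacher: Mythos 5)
Your proposal is correct and follows essentially the same route as the paper: both verify $C^\infty(X)$-linearity of $A_t$ by applying the Leibniz rule of each constituent operator to $fs$, observing that the $\overline\partial f$ terms cancel against $-\overline\partial_E$ and the $\phi_t\lrcorner\partial f$ terms cancel against $-\{\partial_K,\phi_t\lrcorner\}$, and then invoking the standard identification of tensorial operators $A(E)\to A^1(E)$ with elements of $A^1(\mathrm{End}(E))$. The sign-tracking you flag as the crux is exactly the content of the paper's two-line computation.
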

\begin{proof}
Let $f\in A(X)$ and $s\in A(E)$. Using the Leibniz rule and the fact that the contraction is only taken in the (1,0)-part, we have
\begin{equation*}
\begin{split}
A_{t}(fs)&=(\overline\partial + l_{\phi_{t}})f\otimes s + f\overline D_{t}(s) - \overline\partial f\otimes s-f\overline\partial_{E}s+\phi_{t}\lrcorner\partial_{K}(fs)-\theta\wedge(fs)\\
&=(\overline\partial  -\phi_{t}\lrcorner\partial)f\otimes s + f\overline D_{t}(s) - \overline\partial f\otimes s-f\overline\partial_{E}s+\phi_{t}\lrcorner(\partial f\otimes s +f\partial_{K}s)-f\theta\wedge s\\
&=f(\overline D_{t} - \overline\partial_{E} - \{\partial_{K},\phi_{t}\}-\theta)s\\
&=fA_{t}(s). 
\end{split}
\end{equation*}
This shows that $A_{t}\in A^{1}(EndE).$
\end{proof}
The above propositions tell us that if we have a family of holomorphic-Higgs pair $\{(X_{t}, E_{t},\theta_{t})\}_{t\in\Delta}$ which comes from the deformation of $(X, E,\theta)$, we naturally obtain a differential operator $\overline D_{t}$ such that $(\overline D_{t})^2=0$, and elements  $A_{t}\in A^{1}(\mathrm{End}(E))$ and  $\phi_{t}\in A_{X}^{(0,1)}(TX)$.
We want the converse.\par
Suppose we have a given smooth family $A_{t}\in A^{0,1}_{X}(\mathrm{End}E)$, $B_{t}\in A^{1,0}_{X}(\mathrm{End}E)$ and $\phi_{t}\in A^{0,1}_{X}(TX)$ parametrized by $t\in\Delta$.\par
We define the operator $\overline D_{t}:A(E)\to A^{1}(E)$ as
$$\overline D_{t}:=\overline\partial_{E} + \{\partial_{K},\phi_{t}\lrcorner\} + A_{t} + \theta + B_{t}.$$
We extend $\overline D_{t}$ to $A^{p}(E)$ in an obvious way so that it satisfies the Leibniz rule:
$$\overline D_{t}(\alpha\otimes s)=(\overline\partial + l_{\phi_{t}})\alpha\otimes s +(-1)^{p}\alpha\wedge\overline D_{t}(s).$$
 We want to show that if $\overline D_{t}^{2}= 0$, $(A_{t},B_{t},\phi_{t})$ defines a holomorphic-Higgs pair $(X_{t}, E_{t}, \theta_{t})$. First of all, we have,

\begin{proposition}
If $\overline D_{t}^{2}= 0$, $\phi_{t}$ defines a holomorphic structure on $X$. We denote this complex manifold $X_{t}$.
\end{proposition}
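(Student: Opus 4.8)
The plan is to extract from the hypothesis $\overline D_t^2 = 0$ its purely ``horizontal'' part over the base, show that this part forces $\phi_t$ to satisfy the classical integrability equation $\overline\partial_{TX}\phi_t - \tfrac{1}{2}[\phi_t,\phi_t] = 0$, and then invoke the Newlander--Nirenberg theorem to produce the complex structure $X_t$.

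First I would test $\overline D_t^2$ against a decomposable section $fs$ with $f\in A(X)$ a function and $s\in A(E)$. Applying the Leibniz rule for $\overline D_t$ together with the Leibniz rule for $\overline\partial+l_{\phi_t}$ on forms, the two cross terms cancel (with the signs as in the two Leibniz identities) and one is left with
\[
\overline D_t^2(fs) = (\overline\partial+l_{\phi_t})^2 f \otimes s + f\,\overline D_t^2 s.
\]
Since $\overline D_t^2=0$ on both $fs$ and $s$, the right-hand side reduces to $(\overline\partial+l_{\phi_t})^2 f \otimes s = 0$. Choosing $s$ nonvanishing on a local chart then yields $(\overline\partial+l_{\phi_t})^2 f = 0$ for every $f\in A(X)$.

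Next I would interpret this vanishing. By the formula recalled earlier (from \cite{M2}), $(\overline\partial+l_{\phi_t})^2 = l_{N}$ with $N := \overline\partial_{TX}\phi_t-\tfrac{1}{2}[\phi_t,\phi_t]\in A^{0,2}(TX)$. On a function one has $N\lrcorner f=0$, so $(\overline\partial+l_{\phi_t})^2 f = l_N f = -\,N\lrcorner\partial f$. The vanishing for all $f$ means $N\lrcorner\alpha = 0$ for every $(1,0)$-form $\alpha=\partial f$; since a $TX$-valued $(0,2)$-form is recovered from its contractions against all $(1,0)$-forms, I conclude $N=0$, i.e. $\phi_t$ is a Maurer--Cartan element for the Schouten--Nijenhuis bracket. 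Finally, because $\phi_0=0$ and the family is smooth, $\phi_t$ has small norm for $|t|$ small, so $\phi_t$ genuinely defines an almost complex structure on the underlying smooth manifold, whose Nijenhuis tensor is exactly $N$. Thus $N=0$ gives integrability, and Newlander--Nirenberg produces the complex manifold $X_t$.

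The main obstacle is the bookkeeping in the first step: one must verify that the cross terms $\pm(\overline\partial+l_{\phi_t})f \wedge \overline D_t s$ cancel with the correct signs so that the clean identity above holds, rather than leaving residual first-order contributions that mix with the $\mathrm{End}(E)$-valued pieces $A_t,\theta,B_t$. Conceptually, the only delicate point is ensuring that testing $(\overline\partial+l_{\phi_t})^2$ on functions alone already detects all of $N$, which I justify through the contraction-recovery argument above; once that is in place, the passage from the Maurer--Cartan equation to the complex structure is the standard Kodaira--Spencer dictionary.
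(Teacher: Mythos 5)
Your argument is correct and follows essentially the same route as the paper: test $\overline D_t^2=0$ on $f\otimes s$, use the Leibniz rule to isolate $(\overline\partial+l_{\phi_t})^2 f=0$, invoke the identity $(\overline\partial+l_{\phi_t})^2=l_{\overline\partial_{TX}\phi_t-\frac{1}{2}[\phi_t,\phi_t]}$ from \cite{M2}, and conclude integrability via Newlander--Nirenberg. Your contraction-recovery step (recovering $N$ from $N\lrcorner\partial f$ for all $f$) makes explicit a point the paper passes over when it asserts $(\overline\partial+l_{\phi_t})^2=0$ from its vanishing on functions, which is a welcome clarification but not a different proof.
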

\begin{proof}
Let $f\in A(X)$ and $s\in A(E)$. Since  $\overline D_{t}^{2}= 0$ and it satisfies the Leibniz rule,
\begin{equation*}
0=\overline D_{t}^2(f\otimes s)=(\overline\partial + l_{\phi_{t}})^2f\otimes s.
\end{equation*}
Since $f$ and $s$ are arbitrary function and section, we have $(\overline\partial + l_{\phi_{t}})^2=0$. By \eqref{square}, we have, 
\begin{equation*}
0=(\overline\partial+l_{\phi_t})^2=l_{\overline\partial_{TX}\phi_t-\frac{1}{2}[\phi_t,\phi_t]}.
\end{equation*}
Hence $\overline\partial_{TX}\phi_t-\dfrac{1}{2}[\phi_t,\phi_t]=0.$ Hence $\phi_t$ defines a integrable complex structure on $X$.
\end{proof}

Next, we show that $E$ admits a holomorphic structure over $X_{t}$ and we can induce a Higgs field on it. Let us define $\overline D_{t}':A(E)\to A^{0,1}(E)$ as $\overline D_{t}':=\overline \partial_{E} + \{\partial_{K},\phi_{t}\lrcorner\} + A_{t}$. Remark that $\overline D_{t}= \overline D_{t}' + \theta + B_{t}$. The next claim was proved by the method in \cite{Mo}.

\begin{lemma}
$\mathrm{ker}(\overline D_{t}'$) generates $A(E)$ locally.
\end{lemma}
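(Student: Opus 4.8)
The plan is to show that around any point $x_{0}\in X$ there exist $r=\operatorname{rank}E$ smooth sections $s_{1},\dots,s_{r}$ of $E$, defined on a neighbourhood $U$ of $x_{0}$, with $\overline D_{t}'s_{i}=0$ and with $s_{1}(x_{0}),\dots,s_{r}(x_{0})$ a basis of the fibre $E_{x_{0}}$. By continuity of the determinant these sections remain pointwise independent on a possibly smaller neighbourhood, hence form a local frame; consequently every $C^{\infty}$ section is a $C^{\infty}$-linear combination of the $s_{i}$, which is exactly the assertion that $\ker(\overline D_{t}')$ generates $A(E)$ locally. This is a Koszul--Malgrange / Newlander--Nirenberg type integrability statement for the $\overline\partial$-type operator $\overline D_{t}'$ on $E$ over the deformed base $X_{t}$, and the crucial structural input is the identity $(\overline D_{t}')^{2}=0$.

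First I would record $(\overline D_{t}')^{2}=0$. Writing $\overline D_{t}=\overline D_{t}'+(\theta+B_{t})$ with $\overline D_{t}'\colon A^{0}(E)\to A^{0,1}(E)$ and $\theta+B_{t}\colon A^{0}(E)\to A^{1,0}(E)$, the hypothesis $\overline D_{t}^{2}=0$ splits by $(p,q)$-bidegree into pieces of type $(2,0)$, $(1,1)$ and $(0,2)$. A short check, using that $\phi_{t}\lrcorner\alpha=0$ for $\alpha\in A^{0,1}$ so that $\overline\partial+l_{\phi_{t}}$ sends $A^{0,1}$ into $A^{0,2}$, shows that $(\overline D_{t}')^{2}$ is exactly the $(0,2)$-component, whence $(\overline D_{t}')^{2}=0$. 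Combined with the previous Proposition, which makes $\phi_{t}$ integrable so that $\overline\partial+l_{\phi_{t}}$ is the Dolbeault operator of a genuine complex manifold $X_{t}$, this exhibits $\overline D_{t}'$ as an integrable semi-connection on $E$ over $X_{t}$.

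Next I would solve the local problem. Fixing $x_{0}$, I choose a small coordinate polydisc $U$ and a local holomorphic frame of $(E,\overline\partial_{E})$, in which the equation $\overline D_{t}'s=0$ reads $\overline\partial_{E}s=-(\{\partial_{K},\phi_{t}\lrcorner\}+A_{t})s=:R_{t}(s)$, a perturbation of the flat $\overline\partial$ that is small for $|t|\ll 1$ since $A_{0}=0$ and $\phi_{0}=0$. Using the $\overline\partial$-homotopy operator $P$ on the polydisc, bounded on suitable H\"older or Sobolev spaces and satisfying $\overline\partial P+P\overline\partial=\mathrm{id}$ on forms of positive degree, I recast the equation as the fixed-point problem $s=h+P\,R_{t}(s)$, where $h$ is the constant section equal to a prescribed $v\in E_{x_{0}}$ in the chosen frame. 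For $|t|$ small and $U$ shrunk, $s\mapsto h+P\,R_{t}(s)$ is a contraction and has a unique fixed point $s_{v}$, depending continuously on the data (and smoothly on $t$); the integrability $(\overline D_{t}')^{2}=0$ is what guarantees that the fixed point genuinely satisfies $\overline D_{t}'s_{v}=0$ rather than only the homotopy equation, as in the classical argument. Running this for $v$ ranging over a basis of $E_{x_{0}}$ produces $s_{1},\dots,s_{r}$.

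The main obstacle is analytic rather than formal: the perturbation $R_{t}$ contains the genuinely first-order term $\{\partial_{K},\phi_{t}\lrcorner\}=-\phi_{t}\lrcorner\partial_{K}$, so $s\mapsto P\,R_{t}(s)$ is not a zeroth-order operator and one must control the loss of one derivative, for instance by working in $C^{k,\alpha}$ or $H^{k}$ and balancing the smallness of $\phi_{t}$ against the smoothing gained from $P$; verifying that the contraction constant is genuinely $<1$ on the correct space is the delicate point. A second, structural, subtlety is upgrading a solution of the homotopy equation to an honest element of $\ker\overline D_{t}'$: one applies $\overline D_{t}'$ to the fixed point and uses $(\overline D_{t}')^{2}=0$ together with the mapping properties of $P$ to conclude that the error term vanishes. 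Once these are settled, the frame property follows at once from continuity of $s_{v}$ in $v$ and the openness of the condition that the determinant of the $s_{i}$ be nonzero.
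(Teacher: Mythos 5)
Your proposal is correct in outline, and it supplies an argument where the paper gives none: the paper's entire proof of this lemma is a citation to Chan--Suen's Lemma 3.11, whose method (following Moroianu's linearized Newlander--Nirenberg argument) is exactly the integrability-plus-local-solvability scheme you describe. Your bidegree bookkeeping for extracting $(\overline D_t')^2=0$ from $\overline D_t^2=0$ is sound, since $\phi_t\lrcorner$ kills $(0,q)$-forms and hence $\overline\partial+l_{\phi_t}$ preserves the antiholomorphic degree on $A^{0,1}$. The one place you diverge from the cited route is in how the first-order term $-\phi_t\lrcorner\partial_K$ is handled: the reference first invokes Newlander--Nirenberg for $\phi_t$ to pass to $X_t$-holomorphic coordinates, in which $\overline D_t'$ becomes $\overline\partial_t+a$ with $a$ a genuinely zeroth-order $\operatorname{End}(E)$-valued $(0,1)$-form satisfying $\overline\partial_t a+a\wedge a=0$, so that the classical Koszul--Malgrange contraction applies verbatim; you instead run the contraction directly against the first-order perturbation on the undeformed polydisc, trading the use of Newlander--Nirenberg for the derivative-counting estimate you flag as delicate. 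Both close, but the coordinate-straightening route is cleaner and avoids having to balance the loss of a derivative in $R_t$ against the gain from the homotopy operator; if you keep your version, you should state explicitly the norm in which $s\mapsto P R_t(s)$ is a contraction and why the smallness of $\phi_t, A_t$ for $|t|\ll 1$ suffices there.
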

\begin{proof}
See the proof of \cite[Lemma 3.11.]{Chan}.
\end{proof}

The above lemma tells us that for every $x\in X$ we have an open neighborhood $U$ of $x$ and a frame $\{e_k\}$ on $U$ such that $\{e_k\}\subset \mathrm{ker}(\overline D_t')$.

Let $\{e_{k}\}$ be a local frame of $E$ such that $\{e_{k}\}\subset \mathrm{ker}(\overline D_{t}')$. Let $\overline\partial_t$ be the Dolbeault operator of $X_t.$ We can then define $\overline \partial_{E_{t}}$ by
$$\overline \partial_{E_{t}}(s^{k}e_{k}):=\overline\partial_{t}s^{k}\otimes e_{k}.$$
Let $\{f_{j}\}\subset \mathrm{ker}(\overline D_{t}')$ be an another local frame of $E$, then there exist $(h^{k}_{j})$ such that $f_{j}=h^{k}_{j}e_{k}$. Applying
$\overline D_{t}'$, we have,
$$\overline D_{t}'(f_{j})=\overline D_{t}'(h^{k}_{j}e_{j})=(\overline\partial - \phi_{t}\lrcorner \partial)h^{k}_{j}\otimes e_{k}$$
Since  $e_{k}$ is a local frame, we have $(\overline\partial - \phi_{t}\lrcorner \partial)h^{k}_{j}=0$, which is equivalent to $\overline\partial_{t}h^{k}_{j}=0$.

We can now check $\overline\partial_{E_{t}}$ is well defined. Let $s\in A(E)$ and assume $s$ has local trivilraization as $s=\widetilde{s}_{j}f_{j}=s_{k}e_{k}$.
Applying $\overline\partial_{E_{t}}$ we have,

\begin{center}
$\overline\partial_{E_{t}}(s^{k}e_{k})=\overline\partial_{t}s^{k}\otimes e_{k}=\overline\partial_{t}(\widetilde{s}_{j}h^{k}_{j})\otimes e_{k}=\overline\partial_{t}\widetilde{s}_{j}\otimes h^{k}_{j}e_{k}=\overline\partial_{t}\widetilde{s}_{j}\otimes f_{j}=\overline\partial_{E_{t}}(\widetilde{s}_{j} f_{j}).$
\end{center}

This proves the well-definedness. By definition, $\overline \partial_{E}$ satisfies the Leibniz rule:
\begin{center}
$\overline\partial_{E_{t}}(\alpha\otimes s)=\overline\partial_{t}\alpha\otimes s + (-1)^p\alpha\wedge \overline\partial_{E_{t}}s$
\end{center}

and $\overline\partial_{E_{t}}^2=0$ since $\phi_{t}$ defines an integral almost complex structure on $X$. Hence by the linearized version of the Newlander-Nirenberg Theorem, $E_{t}=(E,\overline\partial_{E_{t}})$ is a holomorphic bundle over $X_{t}$.\par
We want to show next that $\theta_{t}=\theta + B_{t} +\phi_{t} \lrcorner(\theta+B_{t})$ is a Higgs field for $E_{t}$ under the above assertion. By lemma \ref{1,0-form}, $\theta_{t}$ is a (1,0)-form of $X_{t}$ which takes value in $\mathrm{End}(E)$. \par

Let ${e_{k}}\subset \mathrm{ker}(\overline D_{t}')$ be a local frame of $E$ and assume $\theta + B_{t}$ is written as  $\theta + B_{t}=\Sigma_{i}g_{i}dz_{i} $ $(g_{i}\in A(\mathrm{End}(E)))$ respect to this frame. By lemma \ref{hol 1 form}, to show $\theta_{t}$ is a Higgs field on $E_{t}$, it is enough to show $(\overline\partial + l_{\phi_{t}})g_{i}dz_{i}=0$ and $(\theta + B_t)\wedge(\theta + B_t)$=0.\par
 Since $\overline D_{t}$ satisfies the Leibniz rule,
\begin{equation*}
\begin{split}
0&=\overline D_{t}^{2}(e_{k})=\overline D_{t}(\overline D_{t}(e_{k}))=\overline D_{t}((\theta + B_{t})(e_{k}))=\overline D_{t}(g_{i}dz_{i}(e_{k}))\\
&=(\overline\partial + l_{\phi_{t}})(g_{i}dz_{i})e_{k} - g_{i}dz_{i}\wedge \overline D_{t}(e_{k})\\
&=(\overline\partial + l_{\phi_{t}})(g_{i}dz_{i})e_{k}-(\theta+B_t)\wedge(\theta +B_t)(e_k)
\end{split}
\end{equation*}
Hence $\theta_{t}$ is a Higgs field for $E_{t}$ and $(X_{t},E_{t},\theta_{t})$ is a holomorphic-Higgs pair.

In summary, we have proved the following,

\begin{proposition}\label{N-N}
Suppose we have a given smooth family $A_{t}\in A^{0,1}_{X}(\mathrm{End}(E))$, $B_{t}\in A^{1,0}_{X}(\mathrm{End}(E))$, $\phi_{t}\in A_{X}^{0,1}(TX)$ parametrized by $t$. If the induced differential operator
$\overline D_{t}:A^{p}(E)\to A^{p+1}(E)$ satisfies $\overline D_{t}^{2}=0$ and the Leibniz rule 
$$\overline D_{t}(\alpha\wedge s)=(\overline{\partial}+l_{\phi_{t}})\alpha\otimes s + (-1)^{p}\alpha\wedge\overline D_{t}(s),$$

then $E$ admits a holomorphic structure over the complex manifold $X_{t}$, which we denote $E_{t}$ and a Higgs field $\theta_{t}$ such that $(X_{t},E_{t},\theta_{t})$ is a holmorphic-Higgs pair.
\end{proposition}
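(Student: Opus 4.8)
The plan is to decompose the flatness hypothesis $\overline D_t^2=0$ by bidegree and read off one piece of the holomorphic-Higgs structure from each graded component. Writing $\overline D_t=\overline D_t'+(\theta+B_t)$, where $\overline D_t'=\overline\partial_E+\{\partial_K,\phi_t\lrcorner\}+A_t$ is the part of type $(0,1)$ and $\theta+B_t$ the part of type $(1,0)$, the operator $\overline D_t^2$ splits into components of bidegree $(0,2)$, $(1,1)$ and $(2,0)$, namely $(\overline D_t')^2$, the anticommutator $\overline D_t'(\theta+B_t)+(\theta+B_t)\overline D_t'$, and $(\theta+B_t)\wedge(\theta+B_t)$. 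Living in distinct bidegrees, these must each vanish separately. I expect the $(0,2)$ part to furnish the complex structures on $X$ and on $E$, the $(1,1)$ part to give holomorphicity of the Higgs field, and the $(2,0)$ part to give its integrability. For the base, the Leibniz rule applied to a product $f\otimes s$ with $f\in A(X)$ shows $\overline D_t^2$ acts on the scalar factor through $(\overline\partial+l_{\phi_t})^2$, so $\overline D_t^2=0$ forces $(\overline\partial+l_{\phi_t})^2=0$; by the identity $(\overline\partial+l_{\phi_t})^2=l_{\overline\partial_{TX}\phi_t-\frac12[\phi_t,\phi_t]}$ this is the Maurer-Cartan equation $\overline\partial_{TX}\phi_t-\frac12[\phi_t,\phi_t]=0$, and Newlander-Nirenberg integrates $\phi_t$ to a complex structure $X_t$ with Dolbeault operator $\overline\partial_t$.

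Next I would build the holomorphic structure on $E$. The crucial input is the local generation lemma: since $(\overline D_t')^2=0$, the sheaf $\ker\overline D_t'$ generates $A(E)$ locally, so around each point there is a frame $\{e_k\}\subset\ker\overline D_t'$. I would then define $\overline\partial_{E_t}(s^k e_k):=\overline\partial_t s^k\otimes e_k$. Well-definedness follows because any transition matrix $(h^k_j)$ between two such frames satisfies $(\overline\partial-\phi_t\lrcorner\partial)h^k_j=0$, i.e. the $h^k_j$ are holomorphic on $X_t$ by the scalar case of the computation in Lemma \ref{hol 1 form}; the Leibniz rule and $\overline\partial_{E_t}^2=0$ are then immediate from $\overline\partial_t^2=0$. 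The linearized Newlander-Nirenberg theorem for bundles upgrades $(E,\overline\partial_{E_t})$ to a genuine holomorphic bundle $E_t$ over $X_t$.

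Finally I would produce the Higgs field. Set $\theta_t:=(\theta+B_t)+\phi_t\lrcorner(\theta+B_t)$; by Lemma \ref{1,0-form} this is an honest $(1,0)$-form on $X_t$ with values in $\mathrm{End}(E)$, whose $(1,0)$-part is $\theta+B_t$. It remains to check $\overline\partial_{\mathrm{End}(E_t)}\theta_t=0$ and $\theta_t\wedge\theta_t=0$. Evaluating $\overline D_t^2$ on a frame $e_k\in\ker\overline D_t'$ and writing $\theta+B_t=\sum_i g_i\,dz_i$, the Leibniz rule collapses $\overline D_t^2(e_k)$ to $(\overline\partial+l_{\phi_t})(\theta+B_t)\,e_k-(\theta+B_t)\wedge(\theta+B_t)(e_k)$; separating the $(1,1)$ and $(2,0)$ parts gives $(\overline\partial+l_{\phi_t})(\theta+B_t)=0$ and $(\theta+B_t)\wedge(\theta+B_t)=0$. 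The first, via Lemma \ref{hol 1 form} applied to $\mathrm{End}(E)$-valued forms, is precisely holomorphicity of $\theta_t$ on $X_t$, and the second is its integrability, so $(X_t,E_t,\theta_t)$ is a holomorphic-Higgs pair.

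I expect the main obstacle to be the local generation lemma for $\ker\overline D_t'$: this is the genuine analytic existence statement, an integrability theorem of Koszul-Malgrange / Newlander-Nirenberg type for the flat partial connection $\overline D_t'$, and it is what makes the intrinsic definition of $\overline\partial_{E_t}$ possible at all. Fortunately the excerpt supplies it by citation, so I may take it for granted; modulo that input, the remaining work is the bidegree bookkeeping that cleanly separates holomorphicity from integrability of $\theta_t$, which is routine once frames in $\ker\overline D_t'$ are in hand.
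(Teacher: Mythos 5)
Your proposal is correct and follows essentially the same route as the paper: the base complex structure is extracted from $(\overline\partial+l_{\phi_t})^2=0$ via the Leibniz rule on $f\otimes s$, the bundle structure comes from the local generation lemma for $\ker\overline D_t'$ together with holomorphicity of the transition matrices, and the Higgs field $\theta_t=(\theta+B_t)+\phi_t\lrcorner(\theta+B_t)$ is verified by evaluating $\overline D_t^2$ on a frame in $\ker\overline D_t'$ and separating bidegrees. The only cosmetic difference is that you announce the bidegree decomposition of $\overline D_t^2$ up front, whereas the paper lets it emerge from the frame computation; the substance is identical.
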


\section{DGLA and the Maurer-Cartan equation}\label{DGLA section}
Let us recall the definition of DGLA.\par
\begin{definition} A differential graded Lie algebra (DGLA) $(L,[\cdot,\cdot],d)$ is the date of $\mathbb{Z}$-graded vector space $L=\oplus_{i\in\mathbb{Z}}L^{i}$ with a bilinear bracket
 $[ \cdot,\cdot ]: L\times L\to L$ and a linear map $d$ such that
  \begin{enumerate}
  \item $a\in L^i, b\in L^j, ~[a,b]+(-1)^{ij}[b,a]=0.$
  \item The graded Jacobi identity holds:\par
  $a\in L^i,b\in L^j, c\in L^k,$ $[a,[b,c]]=[[a,b],c]+(-1)^{ij}[b,[a,c]].$
  \item $d(L^i)\subset L^{i+1},d\circ d=0$ and $ a\in L^i, b\in L^j,d[a,b]=[da,b]+(-1)^i[a,db].$ The map $d$ is called the differential of $L$.
\end{enumerate}
\end{definition}

We also recall the definition of the Maurer-Cartan equation of a DGLA. 

\begin{definition}
The Maurer-Cartan equation of a DGLA $L$ is 
\begin{center}
$da-\dfrac{1}{2}[a,a]=0,$ $a\in L^1$
\end{center}
The solutions of the Maurer-Cartan equation are called the Maurer-Cartan elements of the DGLA $L$.

\end{definition}
 We derive the Maurer-Cartan equation and DGLA which governs the deformation of the holomorphic-Higgs pair. The next proposition is important to construct the DGLA. Before we state it, we introduce some notation. Let $\partial_{K}^{\mathrm{End}(E)}:A(\mathrm{End}(E))\to A^{1,0}(\mathrm{End}(E))$ be the differential operator induced by $\partial_K$. Let $F_{d_K}$ be the curvature of the Chern connection. Let the bracket $[\cdot,\cdot]$ be the canonical Lie bracket defined on $A^*(\mathrm{End}(E))$ and $[\cdot,\cdot]_{SH}$ be the standard Schouten-Nijenhuys bracket defined on $A^{0,*}(TX)$.
 \begin{proposition}\label{vanish a}
 Suppose we have a $A\in A^{0,1}(\mathrm{End}(E))$, $B\in A^{1,0}(\mathrm{End}(E))$ and $\phi\in A^{0,1}(TX)$. Let $\overline D$ be the differential operator defined as $\overline D : = \overline\partial_{E} + \{\partial_{K},\phi\lrcorner\} + \theta + A + B$.  $\overline D_{}^2=0$ holds if and only if  the following two equations hold:
\begin{equation*}
\left\{
\begin{aligned}
& \overline\partial_{\mathrm{End}(E)}(A+B)-\phi\lrcorner F_{d_{K}}+[\theta,A +B]+\{\partial_{K}^{\mathrm{End}(E)},\phi\lrcorner\}\theta+\{\partial_{K}^{\mathrm{End}(E)},\phi\lrcorner\}(A+B)+\frac{1}{2}[A+B,A+B]=0,\\
&\overline\partial_{TX}\phi-\frac{1}{2}[\phi,\phi]_{SH}=0.\\
\end{aligned}
\right.
\end{equation*}

\end{proposition}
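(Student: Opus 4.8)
The plan is to compute $\overline D^2$ directly as an operator on $A(E)$ and to split it into a part that is tensorial over $A(X)$ (which will reproduce the first equation) and a purely scalar part (which will reproduce the second). First I would exploit the Leibniz rule $\overline D(\alpha\otimes s)=(\overline\partial+l_\phi)\alpha\otimes s+(-1)^p\alpha\wedge \overline D s$. Applying it twice to $f\otimes s$ with $f\in A(X)$, $s\in A(E)$, the two cross terms cancel and one is left with
\[
\overline D^2(f\otimes s)=(\overline\partial+l_\phi)^2 f\otimes s+f\,\overline D^2 s .
\]
Setting $f\equiv 1$ gives $\overline D^2e_k=0$ for a local frame, and then the displayed identity forces $(\overline\partial+l_\phi)^2=0$ on $A(X)$; conversely these two conditions give $\overline D^2=0$ on all of $A(E)$. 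For the scalar condition, \eqref{square} yields $(\overline\partial+l_\phi)^2=l_{\overline\partial_{TX}\phi-\frac12[\phi,\phi]}$, and since evaluating $l_\psi$ on functions gives $\pm\psi\lrcorner\partial f$ with $\partial f$ spanning the $(1,0)$-covectors, $l_\psi=0$ forces $\psi=0$. Hence $(\overline\partial+l_\phi)^2=0$ is exactly $\overline\partial_{TX}\phi-\frac12[\phi,\phi]_{SH}=0$, the second equation.

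It then remains to identify the tensorial part $\overline D^2 s=F_{\overline D}\wedge s$ with the first equation. Writing $\overline D=\mathcal D+(\theta+A+B)$ with $\mathcal D:=\overline\partial_E+\{\partial_K,\phi\lrcorner\}$, I would expand
\[
\overline D^2=\mathcal D^2+\{\mathcal D,\theta+A+B\}+(\theta+A+B)^2 .
\]
Because $\theta,A,B$ are $1$-forms, $(\theta+A+B)^2=\theta\wedge\theta+[\theta,A+B]+\tfrac12[A+B,A+B]$, and $\theta\wedge\theta=0$ since $\theta$ is a Higgs field. The anticommutator of the degree-one Leibniz operator $\mathcal D$ with wedging by an $\mathrm{End}(E)$-valued $1$-form $\omega$ extracts the induced operator on $\mathrm{End}(E)$: the $s$-derivative terms cancel through the sign in the Leibniz rule, leaving $\{\overline\partial_E,\omega\wedge\cdot\}=(\overline\partial_{\mathrm{End}(E)}\omega)\wedge\cdot$ and $\{\{\partial_K,\phi\lrcorner\},\omega\wedge\cdot\}=(\{\partial_K^{\mathrm{End}(E)},\phi\lrcorner\}\omega)\wedge\cdot$. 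Using $\overline\partial_{\mathrm{End}(E)}\theta=0$, this produces $\overline\partial_{\mathrm{End}(E)}(A+B)+\{\partial_K^{\mathrm{End}(E)},\phi\lrcorner\}\theta+\{\partial_K^{\mathrm{End}(E)},\phi\lrcorner\}(A+B)$. Collecting these contributions reproduces every term of the first equation except $-\phi\lrcorner F_{d_K}$.

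The remaining term must come from $\mathcal D^2=(\overline\partial_E+\{\partial_K,\phi\lrcorner\})^2$, and computing this is the main obstacle. I would expand it into the anticommutators $\{\overline\partial_E,\partial_K(\phi\lrcorner)\}$, $\{\overline\partial_E,\phi\lrcorner\partial_K\}$ and $\{\partial_K(\phi\lrcorner),\phi\lrcorner\partial_K\}$ together with $\overline\partial_E^2=0$, then reorganize using $\partial_K^2=0$ and the Chern-curvature identity $\overline\partial_E\partial_K+\partial_K\overline\partial_E=F_{d_K}$. The first-order (scalar) contributions should assemble into $l_{\overline\partial_{TX}\phi-\frac12[\phi,\phi]}$, consistent with the reduction above so that they vanish precisely when the second equation holds, while the genuinely zeroth-order contribution collapses to $-\phi\lrcorner F_{d_K}$. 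This is essentially the computation of \cite{M2} for $(\overline\partial+l_\phi)^2$, now carried out for the Chern connection, so that the non-vanishing of $\partial_K\overline\partial_E+\overline\partial_E\partial_K$ is exactly what supplies the curvature term; the delicate bookkeeping is the interplay of the contraction $\phi\lrcorner(\phi\lrcorner\,\cdot\,)$ with the Schouten--Nijenhuis bracket $[\phi,\phi]_{SH}$ and the attendant signs. Summing all tensorial parts then gives $F_{\overline D}$ equal to the left-hand side of the first equation, which together with the first paragraph establishes the claimed equivalence.
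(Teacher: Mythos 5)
Your proposal is correct and follows essentially the same route as the paper: expand $\overline D^2$, reduce $(\theta+A+B)^2$ and the anticommutators $\{\overline\partial_E+\{\partial_K,\phi\lrcorner\},\,\theta+A+B\}$ to wedging by $\overline\partial_{\mathrm{End}(E)}(A+B)+\{\partial_K^{\mathrm{End}(E)},\phi\lrcorner\}(\theta+A+B)+[\theta,A+B]+\tfrac12[A+B,A+B]$, and extract $-\phi\lrcorner F_{d_K}$ together with the first-order term $-\{\partial_K,(\overline\partial_{TX}\phi-\tfrac12[\phi,\phi])\lrcorner\}$ from $(\overline\partial_E+\{\partial_K,\phi\lrcorner\})^2$ via exactly the two identities the paper isolates as Corollaries \ref{Cartan b vb} and \ref{Leibniz b}. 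Your preliminary step of splitting off the scalar part with the Leibniz rule and the observation that $l_\psi=0$ forces $\psi=0$ is only an organizational difference (and in fact makes explicit why the first-order and zeroth-order pieces must vanish separately, a point the paper leaves implicit); the minor imprecision of calling the residual first-order part $l_{\overline\partial_{TX}\phi-\frac12[\phi,\phi]}$ rather than $-\{\partial_K,(\overline\partial_{TX}\phi-\tfrac12[\phi,\phi])\lrcorner\}$ does not affect the argument.
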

From now on we denote $[\cdot,\cdot]_{SH}$ as $[\cdot,\cdot]$ if there is no confusion.
The proof of the above proposition will be given at the end of the section.\par
Let us define some notation. Let $L^i$ be $L^i:=\oplus_{p+q=i}A^{p,q}(\mathrm{End}(E))\oplus A^{0,i}(TX))$. Let for $\phi\in A^{0,i}(TX),$ $\{\partial_{K}^{\mathrm{End}(E)},\phi\lrcorner\}:=\partial_{K}^{\mathrm{End}(E)}(\phi\lrcorner)+(-1)^i\phi\lrcorner\partial_{K}^{\mathrm{End}(E)}.$ Define the bracket $[\cdot,\cdot]_{T(E)}:L^i\times L^j\to L^{i+j}$ by
\begin{center}
$[(A,\phi),(B,\psi)]_{T(E)}:=((-1)^i\{\partial_{K}^{\mathrm{End}(E)},\psi\lrcorner\}A-(-1)^{(i+1)j}\{\partial_{K}^{\mathrm{End}(E)},\phi\lrcorner\}B-[A,B],[\phi,\psi])$
\end{center}
Let $B\in A^{0,1}(Hom(TX,\mathrm{End}(E)))$ and $C\in A^{1,0}(Hom(TX,\mathrm{End}(E)))$ acts on $v\in A^{0,p}(TX)$ as,
\begin{center}
$B(v):=(-1)^{p}v\lrcorner F_{d_{K}},$ $C(v):=\{\partial^{\mathrm{End}(E)}_{K},v\lrcorner\}\theta.$
\end{center}
We define the linear operator $d_{T(E)}:L\to L$ as,
\begin{center}
$d_{T(E)} :=
\begin{pmatrix}
\overline\partial_{\mathrm{End}(E)} & B\\
0 & \overline\partial_{TX} 
\end{pmatrix}
+
\begin{pmatrix}
\theta & C\\
0 & 0
\end{pmatrix}$
\end{center}

\begin{theorem}\label{DGLA}
$(L=\oplus_i L_i,[\cdot,\cdot]_{T(E)},d_{T(E)})$ is a DGLA.
\end{theorem}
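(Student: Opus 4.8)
The plan is to verify the three DGLA axioms for the explicit data $([\cdot,\cdot]_{T(E)},d_{T(E)})$: graded antisymmetry, the graded Jacobi identity, and the compatibility conditions $d_{T(E)}^2=0$ together with the graded Leibniz rule. The organizing observation is that $L=\oplus_i L^i$ splits as a direct sum of graded vector spaces $\mathfrak g\oplus\mathfrak h$, where $\mathfrak g=\oplus_{p,q}A^{p,q}(\mathrm{End}(E))$ carries the differential $\overline\partial_{\mathrm{End}(E)}+[\theta,\cdot]$ and the bracket $-[\cdot,\cdot]$, and $\mathfrak h=A^{0,*}(TX)$ carries $\overline\partial_{TX}$ and the Schouten--Nijenhuis bracket $[\cdot,\cdot]$. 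On each summand the restricted data is already a DGLA: $(\mathfrak h,\overline\partial_{TX},[\cdot,\cdot])$ is the classical Kodaira--Spencer DGLA, and $(\mathfrak g,\overline\partial_{\mathrm{End}(E)}+[\theta,\cdot],[\cdot,\cdot])$ is a DGLA because $\overline\partial_{\mathrm{End}(E)}^2=0$, $\theta\wedge\theta=0$ and $\overline\partial_{\mathrm{End}(E)}\theta=0$. Hence the genuinely new content lies in the off-diagonal coupling: the Lie-derivative operator $\phi\mapsto\{\partial_K^{\mathrm{End}(E)},\phi\lrcorner\}$ in the bracket, and the maps $B,C$ (curvature and Higgs) in the lower-triangular part of $d_{T(E)}$.

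A cleaner conceptual backbone, which I would take as the primary route, is to realize the whole structure through a faithful representation by operators on $A^*(E)$. Set $\overline D_0:=\overline\partial_E+\theta$, which satisfies $\overline D_0^2=0$. For $(A,\phi)\in L^i$ define the degree-$i$ operator $\rho(A,\phi):=\{\partial_K,\phi\lrcorner\}+A\wedge$ on $A^*(E)$; this is exactly the perturbation appearing in $\overline D=\overline D_0+\rho(A+B,\phi)$. The two key claims are that $\rho$ is injective and that it intertwines the structures with the graded commutator of operators: $[\rho(\eta_1),\rho(\eta_2)]=\pm\,\rho([\eta_1,\eta_2]_{T(E)})$ and $[\overline D_0,\rho(\eta)]=\pm\,\rho(d_{T(E)}\eta)$ for one fixed global sign. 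Granting these, graded antisymmetry and graded Jacobi for $[\cdot,\cdot]_{T(E)}$ are inherited for free from the operator commutator, $d_{T(E)}^2=0$ follows from $[\overline D_0,\overline D_0]=2\overline D_0^2=0$, and the graded Leibniz rule is precisely the operator identity $[\overline D_0,[\rho(\eta_1),\rho(\eta_2)]]=[[\overline D_0,\rho(\eta_1)],\rho(\eta_2)]+(-1)^{i}[\rho(\eta_1),[\overline D_0,\rho(\eta_2)]]$ pulled back through $\rho$. Faithfulness I would check directly: $A\wedge$ is multiplication (zero iff $A=0$) while $\{\partial_K,\phi\lrcorner\}$ is a genuine first-order operator vanishing only for $\phi=0$, and operators of different order cannot cancel.

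Carrying this out reduces to computing four operator commutators and decoding them through $\rho$. The multiplicative pieces are immediate: $[\overline\partial_E,A\wedge]=(\overline\partial_{\mathrm{End}(E)}A)\wedge$ and $[\theta,A\wedge]=[\theta,A]\wedge$ reproduce the diagonal $\overline\partial_{\mathrm{End}(E)}$ and $\theta$ entries of $d_{T(E)}$. The commutator $[\theta,\{\partial_K,\phi\lrcorner\}]$ yields the Higgs term $C(\phi)=\{\partial_K^{\mathrm{End}(E)},\phi\lrcorner\}\theta$, and $[\overline\partial_E,\{\partial_K,\phi\lrcorner\}]$ produces both the contracted-curvature term $B(\phi)=(-1)^i\phi\lrcorner F_{d_K}$ (using $\overline\partial_E\partial_K+\partial_K\overline\partial_E=F_{d_K}$) and the operator $\{\partial_K,(\overline\partial_{TX}\phi)\lrcorner\}$, so decoding returns the $\overline\partial_{TX}\phi$ entry in the $\mathfrak h$-direction. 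The bracket correspondence is checked the same way: $[A\wedge,B\wedge]=[A,B]\wedge$ gives the $-[A,B]$ term, $[\{\partial_K,\phi\lrcorner\},\{\partial_K,\psi\lrcorner\}]$ decodes to the Schouten--Nijenhuis component $[\phi,\psi]$ (with no leftover multiplicative term, since the Chern connection has $\partial_K^2=0$, consistent with the absence of a curvature term in the bracket), and the cross commutators $[\{\partial_K,\phi\lrcorner\},B\wedge]$ reproduce the mixed terms $\{\partial_K^{\mathrm{End}(E)},\phi\lrcorner\}A$.

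The step I expect to be the main obstacle is precisely this off-diagonal/mixed part — equivalently, in the direct approach, the graded Jacobi identity coupling $\mathfrak g$ and $\mathfrak h$. Concretely it amounts to showing that $\phi\mapsto\{\partial_K^{\mathrm{End}(E)},\phi\lrcorner\}$ is a representation of the Schouten--Nijenhuis bracket up to sign, i.e. a Cartan-type identity of the shape $\{\partial_K^{\mathrm{End}(E)},[\phi,\psi]\lrcorner\}=\pm[\{\partial_K^{\mathrm{End}(E)},\phi\lrcorner\},\{\partial_K^{\mathrm{End}(E)},\psi\lrcorner\}]$ together with its compatibility with $B$ and $C$; and, for $d_{T(E)}^2=0$ on the $\mathfrak h$-summand, the Bianchi identity $\overline\partial_{\mathrm{End}(E)}F_{d_K}=0$ (with the matching $\partial_K$-Bianchi relation) and $\overline\partial_{\mathrm{End}(E)}\theta=0$, which force $(\overline\partial_{\mathrm{End}(E)}+[\theta,\cdot])\circ(B+C)+(B+C)\circ\overline\partial_{TX}=0$. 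As a guide and sanity check, the Maurer--Cartan equation of the resulting DGLA reproduces exactly the two equations of Proposition \ref{vanish a} once one specializes to degree $1$ (where the two mixed bracket terms collapse into a single $\{\partial_K^{\mathrm{End}(E)},\phi\lrcorner\}(A+B)$); thus the identities I must verify are already implicit in the computation behind Proposition \ref{vanish a}, and the operator route lets me reuse that work rather than repeat it.
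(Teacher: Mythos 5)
Your operator-representation route is genuinely different from the paper's proof, which verifies the axioms directly: the paper splits the theorem into Proposition \ref{bracket for DGLA} (antisymmetry and Jacobi) and Proposition \ref{d for DGLA} ($d_{T(E)}^2=0$ and the Leibniz rule), expands everything, and reduces to the Cartan-type identities of Corollaries \ref{Cartan b vb}--\ref{Leibniz b}, which are proved from the contraction formula of Lemma \ref{Cartan b} together with the Bianchi identity and the Higgs conditions. Your plan needs exactly the same identities (e.g.\ Corollary \ref{Cartan b vb} \emph{is} the statement that $\phi\mapsto\{\partial_K^{\mathrm{End}(E)},\phi\lrcorner\}$ intertwines the Schouten--Nijenhuis bracket with the graded commutator, and Corollary \ref{Leibniz b} \emph{is} the commutator $[\overline\partial_E,\{\partial_K,\phi\lrcorner\}]$), so in substance the two arguments overlap heavily; what your packaging would buy, if it worked, is getting antisymmetry, Jacobi, $d^2=0$ and Leibniz ``for free'' from the operator algebra instead of by expansion.

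The gap is the claim that $[\rho(\eta_1),\rho(\eta_2)]=\pm\,\rho([\eta_1,\eta_2]_{T(E)})$ holds with one fixed global sign. It does not, for the bracket as defined in the paper. Computing the graded commutator of $\rho(A,\phi)=\{\partial_K,\phi\lrcorner\}+A\wedge$ and $\rho(B,\psi)$ for $(A,\phi)\in L^i$, $(B,\psi)\in L^j$ gives multiplicative part
\begin{equation*}
\{\partial_K^{\mathrm{End}(E)},\phi\lrcorner\}B-(-1)^{ij}\{\partial_K^{\mathrm{End}(E)},\psi\lrcorner\}A+[A,B],
\end{equation*}
whereas the first component of $[(A,\phi),(B,\psi)]_{T(E)}$ carries the coefficients $(-1)^i$, $-(-1)^{(i+1)j}$ and $-1$ on these three terms. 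For $i=j=1$ the two expressions agree up to an overall sign, which is why your sanity check against Proposition \ref{vanish a} (a degree-$1$ statement) succeeds; but already for $(i,j)=(1,2)$ the coefficient patterns are $(+1,-1,+1)$ versus $(-1,-1,-1)$, which are not proportional, so no single constant $c$ satisfies $[\rho(\eta_1),\rho(\eta_2)]=c\,\rho([\eta_1,\eta_2]_{T(E)})$. (The same issue recurs in the differential: the entry $B(v)=(-1)^pv\lrcorner F_{d_K}$ carries a degree-dependent sign that must be matched against $[\overline\partial_E,\{\partial_K,\phi\lrcorner\}]$.) Consequently the transport argument does not deliver the Jacobi identity or the Leibniz rule for the bracket actually named in the theorem; it delivers them for the bracket induced by the commutator, which differs from $[\cdot,\cdot]_{T(E)}$ by degree-dependent signs outside degree~$1$. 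To repair this you must either insert an explicit degree-dependent sign twist into $\rho$ and verify that it is a strict morphism (at which point the sign bookkeeping you hoped to avoid reappears), or fall back on the paper's direct verification. I would also note that injectivity of $\rho$ deserves a one-line symbol argument rather than the assertion that ``operators of different order cannot cancel,'' since $\{\partial_K,\phi\lrcorner\}$ must be shown to have nonvanishing principal symbol whenever $\phi\neq0$.
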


We separate the proof of the theorem into the two propositions below.
Before going to the proof, we introduce some formulas which are useful for the proof.

\begin{lemma}[{\cite[Lemma 3.1.]{M2}}]\label{Cartan}
Let $i_\xi(\omega)=\xi\lrcorner\omega$ for all $\omega\in A^{*}(X).$ For every $\xi,\eta\in A^{0,*}(TX)$, the following equations holds:
\begin{equation}\label{Cartan a}
i_{[\xi,\eta]}=[i_\xi,[\partial,i_\eta]],~ [i_\xi,i_\eta]=0.
\end{equation}
\end{lemma}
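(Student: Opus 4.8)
The plan is to prove both identities of the lemma by the standard principle that a graded derivation of the exterior algebra $A^*(X)$ is determined by its action on a generating set, namely on functions $A^0(X)$ and on $1$-forms. The first step is bookkeeping of degrees: for $\xi\in A^{0,p}(TX)$ the contraction $i_\xi$ sends $A^{r,s}$ to $A^{r-1,s+p}$, so it is an operator of total degree $p-1$, while $\partial$ has degree $1$; throughout, $[\,\cdot\,,\,\cdot\,]$ applied to operators denotes the graded commutator $[S,T]=ST-(-1)^{(\deg S)(\deg T)}TS$. A short computation on a wedge of homogeneous forms shows that each $i_\xi$ is in fact a graded derivation of $A^*(X)$ of degree $p-1$; since the graded commutator of two graded derivations is again a graded derivation, every operator appearing on either side of the two identities is a graded derivation, and it therefore suffices to verify them on functions and on $1$-forms. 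By bilinearity it is also enough to treat decomposable $\xi=\bar\alpha\otimes v$ and $\eta=\bar\beta\otimes w$.

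For the second identity I would argue that $[i_\xi,i_\eta]$ is a graded derivation which vanishes on all generators. On a function it is zero because $i_\xi$ and $i_\eta$ annihilate $A^0(X)$. On a $(0,1)$-form it is zero because the holomorphic contraction kills a purely antiholomorphic form. On a $(1,0)$-form $dz$, the inner contraction gives $i_\eta(dz)=\langle w,dz\rangle\,\bar\beta$, which lies in $A^{0,q}(X)$ and hence carries no $(1,0)$-component, so the outer contraction $i_\xi$ annihilates it; the same holds with the roles reversed. Thus $[i_\xi,i_\eta]$ vanishes on generators and is therefore zero.

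For the first identity I would first observe that $\mathcal L_\eta:=[\partial,i_\eta]$ is a graded derivation of degree $q$, the $\partial$-analogue of the Lie derivative. Both $i_{[\xi,\eta]}$ and $[i_\xi,\mathcal L_\eta]$ are then graded derivations of degree $p+q-1$, so it again suffices to compare them on $A^0(X)$ and on $1$-forms. On functions and on $(0,1)$-forms both sides vanish by the same degree considerations as above. The substantive check is on $(1,0)$-forms, and here I would feed in the classical Cartan calculus in its holomorphic form, $\mathcal L_v=[\partial,i_v]$ and $[\mathcal L_v,i_w]=i_{[v,w]}$ for honest $(1,0)$ vector fields: these produce exactly the vector-field bracket $[v,w]$ together with the Lie-derivative terms acting on the antiholomorphic factors $\bar\alpha,\bar\beta$ that constitute the Schouten--Nijenhuys bracket $[\xi,\eta]$ on $A^{0,*}(TX)$. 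Matching these term by term gives $i_{[\xi,\eta]}=[i_\xi,\mathcal L_\eta]$ on $(1,0)$-forms, completing the verification on generators.

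The main obstacle I anticipate is the sign bookkeeping: because $i_\xi$ carries the unusual degree $p-1$, the graded-commutator signs in $[i_\xi,\mathcal L_\eta]$ and the signs in the Leibniz rules for these derivations must be tracked carefully and then reconciled with the sign conventions built into the definition of the Schouten--Nijenhuys bracket on $A^{0,*}(TX)$. The cleanest way to control this is to carry out every computation on decomposable tensors $\bar\alpha\otimes v$ and to isolate the purely holomorphic content, where the identities collapse to the familiar Cartan formulas for vector fields; the antiholomorphic factors $\bar\alpha,\bar\beta$ then contribute only wedge factors and the associated Koszul signs, which can be settled once and for all.
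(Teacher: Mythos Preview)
The paper does not prove this lemma at all: it is quoted verbatim from Martinengo \cite[Lemma~3.1]{M2} and used as a black box, so there is no in-paper argument to compare against. Your strategy---reducing both identities to equalities of graded derivations and checking them on generators (functions and $1$-forms), after decomposing $\xi=\bar\alpha\otimes v$, $\eta=\bar\beta\otimes w$---is the standard and correct route, and it is essentially how such Cartan-type formulas are proved in the cited reference. Your degree bookkeeping and the vanishing checks on $A^0$ and $A^{0,1}$ are right; the only real work, as you note, is the $(1,0)$-form case together with the Koszul signs coming from the shifted degree $p-1$ of $i_\xi$, and your plan to isolate the holomorphic content and invoke the classical identities $\mathcal L_v=[\partial,i_v]$, $[\mathcal L_v,i_w]=i_{[v,w]}$ is exactly how that case closes.
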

We slightly modify the Lemma \ref{Cartan} so that we can use it in our proof.
\begin{lemma}\label{Cartan b}
Let  $X$  be a complex manifold and $E$ be a holomorphic bundle over $X$.  Let $K$ be a hermitian metric on $E$ and $\partial_K$ be a (1,0)-part of the Chern connection. By considering the degree of the differential form of (\ref{Cartan a}), for any $\omega\in A^{*}(E)$ and any $\phi\in A^{0,j}(TX)$ and $\psi\in A^{0,k}(TX),$ we have 
\begin{equation}
[\phi,\psi]\lrcorner\omega=\phi\lrcorner\partial_K(\psi\lrcorner\omega)-(-1)^{jk+k}\partial_K(\psi\lrcorner(\phi\lrcorner\omega))-(-1)^{jk}\psi\lrcorner\partial_K(\phi\lrcorner\omega)-(-1)^{jk+k}\psi\lrcorner\phi\lrcorner\partial_K\omega.
\end{equation}
\end{lemma}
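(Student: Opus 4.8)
The plan is to obtain \eqref{Cartan a} in the bundle-valued setting and then simply expand the graded commutators. First I would check that the two operator identities of Lemma~\ref{Cartan}, stated for $\partial$ on $A^*(X)$, continue to hold verbatim with $\partial$ replaced by $\partial_K$ and $\omega$ allowed to be $E$-valued. Using the operator degrees $|i_\xi|=j-1$ for $\xi\in A^{0,j}(TX)$ and $|\partial_K|=1$, the relation $[i_\xi,i_\eta]=0$ is purely algebraic (contraction ignores the $E$-factor), so it transplants immediately. For $i_{[\phi,\psi]}=[i_\phi,[\partial_K,i_\psi]]$ I would work in a local holomorphic frame and write $\partial_K=\partial+e_\alpha$, where $\alpha\in A^{1,0}(\mathrm{End}(E))$ is the connection matrix and $e_\alpha=\alpha\wedge(\cdot)$; the difference between the $\partial_K$- and $\partial$-versions of the right-hand side is then the correction term $[i_\phi,[e_\alpha,i_\psi]]$.

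Second, I would show this correction vanishes. By the graded Jacobi identity for operators together with $[i_\phi,i_\psi]=0$,
\[
[i_\phi,[e_\alpha,i_\psi]]=\pm[[i_\phi,e_\alpha],i_\psi].
\]
A direct check shows $[i_\phi,e_\alpha]$ is, up to sign, wedging by $\phi\lrcorner\alpha$; since $\alpha$ carries a single $(1,0)$-form index, $\phi\lrcorner\alpha\in A^{0,j}(\mathrm{End}(E))$ has no $(1,0)$-component left. Consequently $\psi\lrcorner(\phi\lrcorner\alpha)=0$, so the remaining commutator with $i_\psi$ is zero and the correction vanishes. Thus all the formal properties of $\partial$ used in \cite{M2} survive for $\partial_K$; in particular $\partial_K^2=F_{d_K}^{2,0}=0$ because the Chern connection has curvature of type $(1,1)$, which is exactly why $\partial_K$ (rather than an arbitrary connection) is the correct operator. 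Hence \eqref{Cartan a} holds with $\partial_K$ acting on $A^*(E)$.

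Third, I would expand. Writing $[\partial_K,i_\psi]=\partial_K i_\psi-(-1)^{k-1}i_\psi\partial_K$ and then $[i_\phi,[\partial_K,i_\psi]]=i_\phi[\partial_K,i_\psi]-(-1)^{(j-1)k}[\partial_K,i_\psi]i_\phi$, applying the result to $\omega$ produces the four compositions $\phi\lrcorner\partial_K(\psi\lrcorner\omega)$, $\partial_K(\psi\lrcorner(\phi\lrcorner\omega))$, $\psi\lrcorner\partial_K(\phi\lrcorner\omega)$ and $\phi\lrcorner\psi\lrcorner\partial_K\omega$, each with a sign read off from the degrees. Finally I would use $i_\phi i_\psi=(-1)^{(j-1)(k-1)}i_\psi i_\phi$ (the second identity of Lemma~\ref{Cartan}) to rewrite the $\phi\lrcorner\psi\lrcorner\partial_K\omega$ term in the $\psi\lrcorner\phi\lrcorner\partial_K\omega$ form of the statement, collecting the four signs into the stated coefficients.

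I expect the \emph{sign bookkeeping} in this last step to be the main obstacle. The coefficients $(-1)^{jk}$ and $(-1)^{jk+k}$ depend delicately on the grading shift $|i_\xi|=|\xi|-1$, on the ordering convention chosen inside each graded commutator, and on the reordering of the two contractions, so all four signs must be tracked simultaneously and reconciled with the contraction convention fixed earlier in the section. The geometric input (vanishing of the connection correction and $\partial_K^2=0$) is comparatively short; the bulk of the work is purely combinatorial.
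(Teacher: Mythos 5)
Your plan follows the same route as the paper, which gives no written proof of this lemma beyond the instruction to expand the double graded commutator of Lemma \ref{Cartan} while keeping track of degrees; your third step is exactly that. Your first two steps are a genuine improvement on the paper, which silently replaces $\partial$ by $\partial_K$ and scalar forms by $E$-valued ones: reducing the correction $[i_\phi,[e_\alpha,i_\psi]]$ via the graded Jacobi identity and $[i_\phi,i_\psi]=0$ to wedging by $\psi\lrcorner(\phi\lrcorner\alpha)$, which vanishes because $\alpha$ is of pure type $(1,0)$, is correct and is the real content of the upgrade.

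One concrete warning about the step you yourself flagged as the main obstacle. With the paper's conventions ($|i_\xi|=|\xi|-1$, so that $[\partial_K,i_\psi]=\partial_K i_\psi+(-1)^{k}i_\psi\partial_K$, consistent with the definition of $\{\partial_K^{\mathrm{End}(E)},\psi\lrcorner\}$ in Section~3), the expansion gives
\begin{align*}
[i_\phi,[\partial_K,i_\psi]]\,\omega
= \phi\lrcorner\partial_K(\psi\lrcorner\omega)+(-1)^{k}\,\phi\lrcorner\psi\lrcorner\partial_K\omega
-(-1)^{jk+k}\,\partial_K(\psi\lrcorner\phi\lrcorner\omega)-(-1)^{jk}\,\psi\lrcorner\partial_K(\phi\lrcorner\omega),
\end{align*}
and rewriting the second term with $i_\phi i_\psi=(-1)^{(j-1)(k-1)}i_\psi i_\phi$ turns it into $-(-1)^{jk+j}\,\psi\lrcorner\phi\lrcorner\partial_K\omega$, not $-(-1)^{jk+k}\,\psi\lrcorner\phi\lrcorner\partial_K\omega$ as in the statement. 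The two coincide precisely when $j+k$ is even --- in particular for $j=k=1$, the only case needed for the Maurer--Cartan equation --- so you should either record the last coefficient as $-(-1)^{jk+j}$ (equivalently leave the term as $+(-1)^{k}\phi\lrcorner\psi\lrcorner\partial_K\omega$) or note explicitly that the stated form is used only for $j\equiv k\pmod 2$. Apart from reconciling this one sign with the statement, your argument goes through.
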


We obtain the corollaries below by applying lemma \ref {Cartan b}.
\begin{corollary}\label{Cartan b vb}
Let $A\in A^i(\mathrm{End}(E))$, $\phi\in A^{0,j}(TX)$ and $\psi\in A^{0,k}(TX)$. Then we have,
\begin{equation*}\label{Cartan bracket}
\{\partial_{K}^{\mathrm{End}(E)},[\phi,\psi]\lrcorner\}A=\{\partial_{K}^{\mathrm{End}(E)}, \phi\lrcorner\}\{\partial_{K}^{\mathrm{End}(E)}, \psi\lrcorner\}A-(-1)^{jk}\{\partial_{K}^{\mathrm{End}(E)},\psi\lrcorner\}\{\partial_{K}^{\mathrm{End}(E)}, \phi\lrcorner\}A.
\end{equation*}
\end{corollary}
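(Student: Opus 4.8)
The plan is to prove the identity by direct substitution of Lemma \ref{Cartan b}, exactly as the phrase ``by applying lemma \ref{Cartan b}'' suggests. The two structural inputs that do the real work are that the Chern connection has curvature of type $(1,1)$, so that $(\partial_K^{\mathrm{End}(E)})^2=0$, and that contractions graded-commute, $[\phi\lrcorner,\psi\lrcorner]=0$, as recorded in Lemma \ref{Cartan}.

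First I would abbreviate $\partial:=\partial_K^{\mathrm{End}(E)}$ and unwind both sides into words in the three operators $\partial$, $\phi\lrcorner$ and $\psi\lrcorner$. Since $[\phi,\psi]\in A^{0,j+k}(TX)$, the definition of $\{\partial,\cdot\}$ gives
\[
\{\partial,[\phi,\psi]\lrcorner\}A=\partial\bigl([\phi,\psi]\lrcorner A\bigr)+(-1)^{j+k}[\phi,\psi]\lrcorner(\partial A),
\]
and I would substitute Lemma \ref{Cartan b} twice, once with $\omega=A$ and once with $\omega=\partial A$. On the right-hand side I would expand $\{\partial,\phi\lrcorner\}=\partial(\phi\lrcorner)+(-1)^j\phi\lrcorner\partial$ and likewise for $\psi$, and multiply out the composite $\{\partial,\phi\lrcorner\}\{\partial,\psi\lrcorner\}A-(-1)^{jk}\{\partial,\psi\lrcorner\}\{\partial,\phi\lrcorner\}A$.

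The decisive simplification is that every monomial carrying two adjacent copies of $\partial$ dies by $\partial^2=0$: this removes the $\partial^2(\psi\lrcorner\phi\lrcorner A)$ term from $\partial([\phi,\psi]\lrcorner A)$, the term containing $\partial\partial A$ from $[\phi,\psi]\lrcorner(\partial A)$, and the two inner $\partial^2$ terms from each composite on the right. After these deletions both sides collapse to the same short list of monomials in $\partial$, $\phi\lrcorner$, $\psi\lrcorner$ applied to $A$, and I would match them one at a time. All but the monomials of the shape $\partial(\psi\lrcorner\,\phi\lrcorner\,\partial\,\cdot)$ pair off at sight; the last matching forces me to reorder a factor $\partial(\phi\lrcorner\,\psi\lrcorner\,\partial A)$ into $\partial(\psi\lrcorner\,\phi\lrcorner\,\partial A)$ via $\phi\lrcorner\,\psi\lrcorner=(-1)^{(j-1)(k-1)}\psi\lrcorner\,\phi\lrcorner$, after which the coefficients on the two sides agree.

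I expect the only genuine obstacle to be the sign bookkeeping, concentrated precisely in this last reordering step, where the parities $j-1$ and $k-1$ of the two contractions and the degree $j+k$ of $[\phi,\psi]$ must all be tracked consistently. A cleaner, if less self-contained, alternative is to observe that $\{\partial,\phi\lrcorner\}$ is nothing but the graded commutator $[\partial,\phi\lrcorner]$, so that the assertion is the operator identity $\{\partial,[\phi,\psi]\lrcorner\}=[\,\{\partial,\phi\lrcorner\},\{\partial,\psi\lrcorner\}\,]$. Since $\partial^2=0$, the graded Jacobi identity yields $[\partial,\{\partial,\phi\lrcorner\}]=0$, and a second application reduces the composite to $\pm[\partial,[\{\partial,\phi\lrcorner\},\psi\lrcorner]]$, where $[\{\partial,\phi\lrcorner\},\psi\lrcorner]=\pm[\phi,\psi]\lrcorner$ is exactly the Cartan formula of Lemma \ref{Cartan b}. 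This route isolates where $\partial^2=0$ and $[\phi\lrcorner,\psi\lrcorner]=0$ each enter and leaves only the overall sign to pin down.
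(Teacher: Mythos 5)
Your proposal is correct and follows essentially the same route as the paper: the paper likewise expands $\{\partial_K^{\mathrm{End}(E)},[\phi,\psi]\lrcorner\}A$ by substituting Lemma \ref{Cartan b} with $\omega=A$ and $\omega=\partial_K^{\mathrm{End}(E)}A$, kills every monomial containing $(\partial_K^{\mathrm{End}(E)})^2$ (which vanishes since the Chern curvature is of type $(1,1)$), multiplies out the right-hand side, and matches terms using the graded commutativity of the two contractions. Your closing remark recasting the identity as a graded-commutator/Jacobi computation is a nice conceptual shortcut the paper does not take, but the main argument you outline is the paper's proof.
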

\begin{proof}
We denote $\partial_{K}^{\mathrm{End}(E)}$ as $\partial_K$ in this proof.\par
Applying lemma \ref {Cartan b} to the left-hand side of the equation we have,
\begin{align*}
\{\partial_K,[\phi,\psi]\lrcorner\}A=& \partial_K([\phi,\psi]\lrcorner A)+(-1)^{j+k}[\phi,\psi]\lrcorner\partial_KA\\
=& \partial_K\{\phi\lrcorner\partial_K(\psi\lrcorner A)-(-1)^{jk+k}\partial_K(\psi\lrcorner(\phi\lrcorner A))-(-1)^{jk}\psi\lrcorner\partial_K(\phi\lrcorner A)-(-1)^{jk+k}\psi\lrcorner\phi\lrcorner\partial_K A\}\\
&+(-1)^{j+k}\{\phi\lrcorner\partial_K(\psi\lrcorner\partial_K A)-(-1)^{jk+k}\partial_K(\psi\lrcorner(\phi\lrcorner\partial_K A))-(-1)^{jk}\psi\lrcorner\partial_K(\phi\lrcorner\partial_K A)\}\\
=&\partial_K(\phi\lrcorner\partial_K(\psi\lrcorner A))-(-1)^{jk}\partial_K(\psi\lrcorner\partial_K(\phi\lrcorner A))-(-1)^{jk+k}\partial_K(\psi\lrcorner\phi\lrcorner\partial_K A)\\
&+(-1)^{j+k}\{\phi\lrcorner\partial_K(\psi\lrcorner\partial_K A)-(-1)^{jk+k}\partial_K(\psi\lrcorner(\phi\lrcorner\partial_K A))-(-1)^{jk}\psi\lrcorner\partial_K(\phi\lrcorner\partial_K A)\}\\
=&\partial_K(\phi\lrcorner\partial_K(\psi\lrcorner A))-(-1)^{jk}\partial_K(\psi\lrcorner\partial_K(\phi\lrcorner A))-(-1)^{jk+k}\partial_K(\psi\lrcorner\phi\lrcorner\partial_K A)\\
&+(-1)^{j+k}\phi\lrcorner\partial_K(\psi\lrcorner\partial_K A)-(-1)^{jk+j}\partial_K(\psi\lrcorner(\phi\lrcorner\partial_K A))-(-1)^{jk+j+k}\psi\lrcorner\partial_K(\phi\lrcorner\partial_K A)\}.
\end{align*}
We apply lemma \ref{Cartan a} for the computation of the right-hand side of the equation.
\begin{align*}
&\{\partial_K, \phi\lrcorner\}\{\partial_K, \psi\lrcorner\}A-(-1)^{jk}\{\partial_K, \psi\lrcorner\}\{\partial_K, \phi\lrcorner\}A\\
=&\{\partial_K, \phi\lrcorner\}(\partial_K\psi\lrcorner A+(-1)^k\psi\lrcorner\partial_K )A-(-1)^{jk}\{\partial_K, \psi\lrcorner\}(\partial_K\phi\lrcorner A+(-1)^j\phi\lrcorner\partial_K A)\\
=&\partial_K(\phi\lrcorner\partial_K(\psi\lrcorner A))+(-1)^k\partial_K(\phi\lrcorner\psi\lrcorner\partial_K A)+(-1)^{j+k}\phi\lrcorner\partial_K(\psi\lrcorner\partial_K A)\\
&-(-1)^{jk}\{\partial_K(\psi\lrcorner\partial_K(\phi\lrcorner A))+(-1)^j\partial_K(\psi\lrcorner\phi\lrcorner\partial_K A)+(-1)^{j+k}\psi\lrcorner\partial_K(\phi\lrcorner\partial_K A)\}\\
=&\partial_K(\phi\lrcorner\partial_K(\psi\lrcorner A))-(-1)^{jk+k}\partial_K(\psi\lrcorner\phi\lrcorner\partial_K A)+(-1)^{j+k}\phi\lrcorner\partial_K(\psi\lrcorner\partial_K A)\\
&-(-1)^{jk}\partial_K(\psi\lrcorner\partial_K(\phi\lrcorner A))-(-1)^{jk+j}\partial_K(\psi\lrcorner\phi\lrcorner\partial_K A)-(-1)^{jk+j+k}\psi\lrcorner\partial_K(\phi\lrcorner\partial_K A).
\end{align*}
Hence we have equality holds.
\end{proof}

\begin{corollary}\label{Cartan curvature}
Let $F_{d_K}$ be the curvature of the Chern connection. Let $\phi\in A^{0,i}(TX)$ and $\psi\in A^{0,j}(TX)$. Then we have
\begin{equation*}
[\phi,\psi]\lrcorner F_{d_K}=(-1)^i\{\partial_{K}^{\mathrm{End}(E)},\phi\lrcorner\}\psi\lrcorner F_{d_K}-(-1)^{ij+j}\{\partial_{K}^{\mathrm{End}(E)},\psi\lrcorner\}\phi\lrcorner F_{d_K}.
\end{equation*}
\end{corollary}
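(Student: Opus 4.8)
The plan is to derive the identity directly from Lemma \ref{Cartan b} by taking $\omega=F_{d_K}$ and then exploiting two structural features of the Chern curvature that annihilate most of the resulting terms. Concretely, I apply Lemma \ref{Cartan b} with its degree parameters $j,k$ set to $i,j$, so that $\phi\in A^{0,i}(TX)$ and $\psi\in A^{0,j}(TX)$ occupy the first and second argument. This gives
\begin{equation*}
[\phi,\psi]\lrcorner F_{d_K}=\phi\lrcorner\partial_K(\psi\lrcorner F_{d_K})-(-1)^{ij+j}\partial_K(\psi\lrcorner(\phi\lrcorner F_{d_K}))-(-1)^{ij}\psi\lrcorner\partial_K(\phi\lrcorner F_{d_K})-(-1)^{ij+j}\psi\lrcorner\phi\lrcorner\partial_K F_{d_K},
\end{equation*}
where $\partial_K$ abbreviates $\partial_K^{\mathrm{End}(E)}$ acting on $\mathrm{End}(E)$-valued forms.

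Two simplifications then apply. First, the Bianchi identity for the Chern connection, $d_K F_{d_K}=0$, splits by type into $\partial_K^{\mathrm{End}(E)}F_{d_K}=0$ and $\overline\partial_{\mathrm{End}(E)}F_{d_K}=0$; the former kills the last term $\psi\lrcorner\phi\lrcorner\partial_K F_{d_K}$. Second, since $F_{d_K}$ is of type $(1,1)$, a single contraction $\phi\lrcorner F_{d_K}$ already lies in $A^{0,1+i}(\mathrm{End}(E))$ and hence has no remaining $(1,0)$-component, so any second contraction $\psi\lrcorner(\phi\lrcorner F_{d_K})$ vanishes; this removes the term $\partial_K(\psi\lrcorner(\phi\lrcorner F_{d_K}))$. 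What survives is the compact expression
\begin{equation*}
[\phi,\psi]\lrcorner F_{d_K}=\phi\lrcorner\partial_K(\psi\lrcorner F_{d_K})-(-1)^{ij}\psi\lrcorner\partial_K(\phi\lrcorner F_{d_K}).
\end{equation*}

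It then remains to check that the right-hand side of the corollary reduces to this same expression. Expanding each anticommutator via $\{\partial_K^{\mathrm{End}(E)},\xi\lrcorner\}=\partial_K^{\mathrm{End}(E)}(\xi\lrcorner)+(-1)^{\deg\xi}\xi\lrcorner\partial_K^{\mathrm{End}(E)}$ and again discarding the double-contraction terms $\phi\lrcorner(\psi\lrcorner F_{d_K})$ and $\psi\lrcorner(\phi\lrcorner F_{d_K})$ (both zero by the same type argument), the first summand $(-1)^i\{\partial_K^{\mathrm{End}(E)},\phi\lrcorner\}(\psi\lrcorner F_{d_K})$ collapses to $(-1)^{2i}\phi\lrcorner\partial_K(\psi\lrcorner F_{d_K})=\phi\lrcorner\partial_K(\psi\lrcorner F_{d_K})$, while the second summand $-(-1)^{ij+j}\{\partial_K^{\mathrm{End}(E)},\psi\lrcorner\}(\phi\lrcorner F_{d_K})$ collapses to $-(-1)^{ij+2j}\psi\lrcorner\partial_K(\phi\lrcorner F_{d_K})=-(-1)^{ij}\psi\lrcorner\partial_K(\phi\lrcorner F_{d_K})$. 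These agree term by term with the simplified left-hand side, which proves the identity.

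I expect the only delicate point to be the sign and degree bookkeeping: one must be consistent about which degree parameter of Lemma \ref{Cartan b} is assigned to $\phi$ and which to $\psi$, and must track the factors $(-1)^i$ and $(-1)^j$ produced by the anticommutator convention. The conceptual input beyond the preceding lemma is minimal, namely the Bianchi identity $\partial_K^{\mathrm{End}(E)}F_{d_K}=0$ together with the vanishing of iterated $(1,0)$-contractions of the $(1,1)$-form $F_{d_K}$; everything else is sign arithmetic.
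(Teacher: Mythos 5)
Your proof is correct and follows essentially the same route as the paper: apply Lemma \ref{Cartan b} with $\omega=F_{d_K}$, kill the $\partial_K F_{d_K}$ term by the Bianchi identity and the double contractions by the $(1,1)$-type of $F_{d_K}$, then expand the anticommutators on the right-hand side and match terms. The only difference is that you make explicit the type argument for why $\psi\lrcorner(\phi\lrcorner F_{d_K})=0$, which the paper leaves implicit.
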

\begin{proof}
We denote $\partial_{K}^{\mathrm{End}(E)}$ as $\partial_K$ in this proof.\par
Recall that  $F_{d_K}$ is a $(1,1)$-form which takes value in $\mathrm{End}(E)$.\par
Applying Lemma \ref {Cartan b} to the left-hand side of the equation and by the Bianchi identity we have,
\begin{align*}
[\phi,\psi]\lrcorner F_{d_K}=&\phi\lrcorner\partial_K(\psi\lrcorner  F_{d_K})-(-1)^{ij+j}\partial_K(\psi\lrcorner(\phi\lrcorner  F_{d_K}))-(-1)^{ij}\psi\lrcorner\partial_K(\phi\lrcorner  F_{d_K})-(-1)^{ij+j}\psi\lrcorner\phi\lrcorner\partial_K  F_{d_K}\\
=&\phi\lrcorner\partial_K(\psi\lrcorner  F_{d_K})-(-1)^{ij}\psi\lrcorner\partial_K(\phi\lrcorner  F_{d_K}).
\end{align*}
By direct computation for the right-hand side of the equation, we have,
\begin{equation*}
\begin{split}
&(-1)^i\{\partial_K,\phi\lrcorner\}\psi\lrcorner F_{d_K}-(-1)^{ij+j}\{\partial_K,\psi\lrcorner\}\phi\lrcorner F_{d_K}\\
=&(-1)^i\partial_K(\phi\lrcorner\psi\lrcorner F_{d_K})+ \phi\lrcorner\partial_K\psi\lrcorner F_{d_K}-(-1)^{ij+j}\partial_K(\psi\lrcorner\phi\lrcorner F_{d_K})-(-1)^{ij} \psi\lrcorner\partial_K\phi\lrcorner F_{d_K}\\
=&\phi\lrcorner\partial_K\psi\lrcorner F_{d_K}-(-1)^{ij} \psi\lrcorner\partial_K\phi\lrcorner F_{d_K}.
\end{split}
\end{equation*}
Hence we have the desired equality.
\end{proof}
By direct computation, we obtain some corollaries.
\begin{corollary}\label{Leibniz a}
Let $A\in A^i(\mathrm{End}(E))$, $B\in A^j(\mathrm{End}(E))$ and $\phi\in A^{0,k}(TX)$. Then we have 
\begin{equation}
\{\partial_{K}^{\mathrm{End}(E)},\phi\lrcorner\}[A,B]=[\{\partial_{K}^{\mathrm{End}(E)},\phi\lrcorner\}A,B]+(-1)^{ik}[A,\{\partial_{K}^{\mathrm{End}(E)},\phi\lrcorner\}B].
\end{equation}
\end{corollary}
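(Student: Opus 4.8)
The plan is to recognize the operator $\{\partial_{K}^{\mathrm{End}(E)},\phi\lrcorner\}$ as the graded commutator of two operators that are each graded derivations of the bracket $[\cdot,\cdot]$, and then to invoke the standard fact that the graded commutator of two graded derivations is again a graded derivation. Writing $\partial_K$ for $\partial_{K}^{\mathrm{End}(E)}$, I first note that as an operator on $A^*(\mathrm{End}(E))$ the interior product $\phi\lrcorner$ lowers the holomorphic degree by one and raises the antiholomorphic degree by $k$, so it has total parity $k-1$. With this parity the definition given just before the statement is exactly the graded commutator $\{\partial_K,\phi\lrcorner\}=\partial_K(\phi\lrcorner)-(-1)^{1\cdot(k-1)}(\phi\lrcorner)\partial_K=[\partial_K,\phi\lrcorner]$, since $-(-1)^{k-1}=(-1)^k$.

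Next I would record the two building-block Leibniz rules. The Chern connection is an algebra connection on $\mathrm{End}(E)$, so its $(1,0)$-part is a degree-$1$ graded derivation of the bracket: $\partial_K[A,B]=[\partial_K A,B]+(-1)^i[A,\partial_K B]$ for $A\in A^i(\mathrm{End}(E))$. Interior multiplication $\phi\lrcorner$ acts only on the form part and not on the endomorphism factors, and interior multiplication is an antiderivation of the wedge product; hence $\phi\lrcorner$ is a graded derivation of parity $k-1$, namely $\phi\lrcorner[A,B]=[\phi\lrcorner A,B]+(-1)^{(k-1)i}[A,\phi\lrcorner B]$.

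The key step is then the general lemma that if $D_1,D_2$ are graded derivations of the bracket of parities $d_1,d_2$, then $[D_1,D_2]=D_1D_2-(-1)^{d_1d_2}D_2D_1$ is a graded derivation of parity $d_1+d_2$. Applying this with $D_1=\partial_K$ ($d_1=1$) and $D_2=\phi\lrcorner$ ($d_2=k-1$) shows that $\{\partial_K,\phi\lrcorner\}$ is a graded derivation of parity $k$, which is precisely the asserted identity with sign $(-1)^{ik}$.

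If one prefers the direct route advertised by the phrase ``by direct computation'', I would instead expand $\{\partial_K,\phi\lrcorner\}[A,B]=\partial_K(\phi\lrcorner[A,B])+(-1)^k\phi\lrcorner\partial_K[A,B]$ using the two building-block rules above and collect the eight resulting terms. Four of them reassemble into $[\{\partial_K,\phi\lrcorner\}A,B]$ and $(-1)^{ik}[A,\{\partial_K,\phi\lrcorner\}B]$, while the remaining four---two copies of $[\phi\lrcorner A,\partial_K B]$ and two of $[\partial_K A,\phi\lrcorner B]$---cancel in pairs because within each pair the sign exponents differ by one. The only real obstacle is the sign bookkeeping: one must assign $\phi\lrcorner$ the parity $k-1$ rather than $k$, and then verify that the two stray pairs indeed carry opposite signs; modulo this, the computation is entirely mechanical.
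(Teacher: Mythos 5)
Your proof is correct, but it takes a genuinely different route from the paper's. The paper proves this identity by brute force in a local trivialization: it writes $\partial_{K}^{\mathrm{End}(E)}=\partial+[K^{-1}\partial K,\cdot]$, expands $\partial_K(\phi\lrcorner[A,B])+(-1)^k\phi\lrcorner\partial_K[A,B]$ into some ten terms, and cancels the cross terms $[\phi\lrcorner A,\partial B]$ and $[\partial A,\phi\lrcorner B]$ by hand. You instead isolate the two structural inputs that make that cancellation inevitable --- $\partial_{K}^{\mathrm{End}(E)}$ is a parity-$1$ graded derivation of the bracket, and $\phi\lrcorner$, acting only on the form factor, is a graded derivation of parity $k-1$ --- and then invoke the standard lemma that the graded commutator of graded derivations is a graded derivation of the sum of the parities; the bookkeeping $1+(k-1)=k$ produces the sign $(-1)^{ik}$ with no term-by-term work. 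Your identification of $\{\partial_K,\phi\lrcorner\}$ with the graded commutator (parity of $\phi\lrcorner$ equal to $k-1$, so $-(-1)^{k-1}=(-1)^{k}$) matches the paper's definition, and your intermediate rule $\phi\lrcorner[A,B]=[\phi\lrcorner A,B]+(-1)^{(k-1)i}[A,\phi\lrcorner B]$ is exactly what the paper uses implicitly in the third line of its computation. The trade-off: your route is shorter and far less sign-error-prone, but it requires you to justify separately that the induced operator on $A^{*}(\mathrm{End}(E))$ is a bracket derivation and to fix the contraction convention (e.g. $\phi\lrcorner\alpha=\bar\omega\wedge(v\lrcorner\alpha)$ for $\phi=v\otimes\bar\omega$) so that the parity-$(k-1)$ derivation property actually holds; the paper's local expansion verifies both of these in passing. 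Your fallback sketch of the direct computation is essentially the paper's proof and is also sound.
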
 
\begin{proof}
We denote $\partial_{K}^{\mathrm{End}(E)}$ as $\partial_K$ in this proof.\par
By using local trivialization we have,
\begin{align*}
\{\partial_K,\phi\lrcorner\}[A,B]&=\partial_K(\phi\lrcorner[A,B])+(-1)^k\phi\lrcorner\partial_K[A,B]\\
=&\partial(\phi\lrcorner[A,B])+[K^{-1}\partial K,\phi\lrcorner[A,B]]+(-1)^{k}\phi\lrcorner(\partial[A,B])+[K^{-1}\partial K,[A,B]])\\
=&[\partial(\phi\lrcorner A),B]+(-1)^{i+k-1}[\phi\lrcorner A,\partial B]+(-1)^{i+ik}[\partial A,\phi\lrcorner B]+(-1)^{ik}[A,\partial(\phi\lrcorner B)]\\
&+(-1)^k[\phi\lrcorner\partial A,B]+(-1)^{ik+i+1}[\partial A,\phi\lrcorner B]+(-1)^{k+i}[\phi\lrcorner A,\partial B]+(-1)^{k+ki}[A,\phi\lrcorner\partial B]\\
&+(-1)^k[\phi\lrcorner K^{-1}\partial K,[A,B]]\\
=&[\partial(\phi\lrcorner A),B]+(-1)^{ik}[A,\partial(\phi\lrcorner B)]+(-1)^k[\phi\lrcorner\partial A,B]+(-1)^{ik+k}[A,\phi\lrcorner\partial B]\\
&+(-1)^k[[\phi\lrcorner K^{-1}\partial K,A],B]+(-1)^{ki+k}[A,[\phi\lrcorner K^{-1}\partial K,B]]\\
=&[\{\partial_K,\phi\lrcorner\}A,B]+(-1)^{ik}[A,\{\partial_K,\phi\lrcorner\}B].
\end{align*}
Hence we have the desired equality.
\end{proof}
\begin{corollary}\label{Leibniz b}
Let $A\in A^{i}(\mathrm{End}(E))$ and $\phi\in A^{0,j}(TX).$ Then the following equality holds:
\begin{equation*}
\overline\partial_{\mathrm{End}(E)}\{\partial_{K}^{\mathrm{End}(E)},\phi\lrcorner\}A=(-1)^j\{\partial_{K}^{\mathrm{End}(E)},\phi\lrcorner\}\overline\partial_{\mathrm{End}(E)}A-\{\partial_{K}^{\mathrm{End}(E)},\overline\partial_{TX}\phi\lrcorner\}A-[\phi\lrcorner F_{d_K},A].
\end{equation*}
\end{corollary}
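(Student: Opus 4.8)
The plan is to reduce the identity to three local facts and then assemble them with careful sign-tracking. Throughout I write $\overline\partial=\overline\partial_{\mathrm{End}(E)}$ and $\partial_K=\partial_K^{\mathrm{End}(E)}$, and recall that by definition $\{\partial_K,\phi\lrcorner\}A=\partial_K(\phi\lrcorner A)+(-1)^j\phi\lrcorner\partial_K A$ for $\phi\in A^{0,j}(TX)$, the sign $(-1)^j$ being fixed by the degree of $\phi$ regardless of the form it acts on. I would work in a local holomorphic frame for $E$, in which $\overline\partial$ acts only on components (no connection term) and $\partial_K=\partial+[K^{-1}\partial K,\cdot]$, exactly as in the proof of Corollary \ref{Leibniz a}; this lets me treat $\overline\partial$, $\partial_K$ and the contraction $\phi\lrcorner$ as explicit operators on scalar-valued data.

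First I would establish the $\overline\partial$-analogue of the Cartan formula of Lemma \ref{Cartan b}: for $\phi\in A^{0,j}(TX)$ and any $\mathrm{End}(E)$-valued form $\omega$,
\[
\overline\partial(\phi\lrcorner\omega)=(\overline\partial_{TX}\phi)\lrcorner\omega-(-1)^j\phi\lrcorner\overline\partial\omega .
\]
Writing $\phi=\phi^k_{\bar J}\,\partial/\partial z_k\otimes d\bar z^J$, so that $\phi\lrcorner\omega=\phi^k_{\bar J}\,d\bar z^J\wedge\iota_k\omega$ with $\iota_k=\partial/\partial z_k\lrcorner$, this follows from the graded anticommutation $\overline\partial\,\iota_k+\iota_k\,\overline\partial=0$ (the interior product kills a $(1,0)$-leg while $\overline\partial$ creates a $(0,1)$-leg) together with the Leibniz rule, the derivatives of the coefficients $\phi^k_{\bar J}$ assembling into $\overline\partial_{TX}\phi$.

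Second, I would record the curvature decomposition coming from $(\partial_K+\overline\partial)^2=[F_{d_K},\cdot]$ on $\mathrm{End}(E)$-valued forms: comparing bidegrees gives $\overline\partial^2=0$ together with the $(1,1)$-identity $\overline\partial\partial_K+\partial_K\overline\partial=[F_{d_K},\cdot]$. Third, I need that contraction is a derivation of the bracket against the curvature, namely $\phi\lrcorner[F_{d_K},A]=[\phi\lrcorner F_{d_K},A]+[F_{d_K},\phi\lrcorner A]$; since each $\iota_k$ is an odd derivation of the wedge-and-matrix product, hence of $[\cdot,\cdot]$, and left-wedging by the scalar $(0,j)$-forms $\phi^k_{\bar J}d\bar z^J$ passes cleanly into the first bracket slot, this is proved by the same local computation as Corollary \ref{Leibniz a}.

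With these in hand the rest is mechanical: expand $\overline\partial\{\partial_K,\phi\lrcorner\}A=\overline\partial\partial_K(\phi\lrcorner A)+(-1)^j\overline\partial(\phi\lrcorner\partial_K A)$, replace each occurrence of $\overline\partial\partial_K$ by $[F_{d_K},\cdot]-\partial_K\overline\partial$, and push every remaining $\overline\partial$ through a contraction using the first formula. The terms involving $\partial_K\overline\partial A$, $\partial_K(\phi\lrcorner\overline\partial A)$, $\partial_K((\overline\partial_{TX}\phi)\lrcorner A)$ and $(\overline\partial_{TX}\phi)\lrcorner\partial_K A$ reassemble termwise into $(-1)^j\{\partial_K,\phi\lrcorner\}\overline\partial A-\{\partial_K,(\overline\partial_{TX}\phi)\lrcorner\}A$, while the residual curvature contributions are exactly $[F_{d_K},\phi\lrcorner A]-\phi\lrcorner[F_{d_K},A]$, which by the third fact equals $-[\phi\lrcorner F_{d_K},A]$. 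I expect the main obstacle to be purely clerical rather than conceptual: every line carries a sign governed by $j=\deg\phi$ and by $\deg A$, and the three ingredients must be invoked with precisely the sign conventions in the definition of $\{\partial_K,\phi\lrcorner\}$. The one genuinely delicate point is fixing the sign in the $\overline\partial$-Cartan formula, since an error there propagates through the entire cancellation; I would pin it down on a small coordinate example (e.g. $\phi=\partial/\partial z_1\otimes d\bar z_1$ acting on $f\,dz_1$ in two variables) before running the general computation.
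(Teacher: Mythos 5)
Your proposal is correct, and I verified that your final assembly does close up: the six terms produced by expanding $\overline\partial\partial_K(\phi\lrcorner A)+(-1)^j\overline\partial(\phi\lrcorner\partial_K A)$ via your three ingredients regroup exactly into $(-1)^j\{\partial_K^{\mathrm{End}(E)},\phi\lrcorner\}\overline\partial_{\mathrm{End}(E)}A-\{\partial_K^{\mathrm{End}(E)},(\overline\partial_{TX}\phi)\lrcorner\}A$ plus the residue $[F_{d_K},\phi\lrcorner A]-\phi\lrcorner[F_{d_K},A]=-[\phi\lrcorner F_{d_K},A]$. This is essentially the paper's argument: the paper also computes in a local holomorphic frame with $\partial_K=\partial+[K^{-1}\partial K,\cdot]$ and implicitly uses your $\overline\partial$-Cartan formula $\overline\partial(\phi\lrcorner\omega)=(\overline\partial_{TX}\phi)\lrcorner\omega-(-1)^j\phi\lrcorner\overline\partial\omega$; the only organizational difference is that you identify the curvature contribution through the global anticommutator $\overline\partial_{\mathrm{End}(E)}\partial_K^{\mathrm{End}(E)}+\partial_K^{\mathrm{End}(E)}\overline\partial_{\mathrm{End}(E)}=[F_{d_K},\cdot]$, whereas the paper recognizes it from the local-frame expression $F_{d_K}=\overline\partial(K^{-1}\partial K)$ --- a harmless repackaging that, if anything, isolates the needed sign conventions more cleanly.
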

\begin{proof}
We denote $\partial_{K}^{\mathrm{End}(E)}$ as $\partial_K$ in this proof.\par
We prove the above equality by using local trivialization.
\begin{equation*}
\begin{split}
\overline\partial_{\mathrm{End}(E)}\{\partial_K,\phi\lrcorner\}A=&\overline\partial_{\mathrm{End}(E)}\{\partial_K(\phi\lrcorner)+(-1)^j\phi\lrcorner\partial_K A\}\\
=&\overline\partial_{\mathrm{End}(E)}\{\partial(\phi\lrcorner A)+(-1)^j[\phi\lrcorner K^{-1}\partial K,A]+(-1)^j\phi\lrcorner\partial A\}\\
=&-\partial(\overline\partial_{TX}\phi\lrcorner A)+(-1)^j\partial(\phi\lrcorner\overline\partial_{\mathrm{End}(E)}A)+(-1)^j[\overline\partial_{EndE}(\phi\lrcorner K^{-1}\partial K ),A]\\
&+[\phi\lrcorner K^{-1}\partial K ,\overline\partial_{\mathrm{End}(E)}A]
+(-1)^j\overline\partial_{TX}\phi\lrcorner\partial A-\phi\lrcorner\overline\partial_{\mathrm{End}(E)}\partial A\\
=&(-1)^j\{\partial_K,\phi\lrcorner\}\overline\partial_{\mathrm{End}(E)}A-[\phi\lrcorner F_{d_K},A]-\partial(\overline\partial_{TX})\phi\lrcorner A+(-1)^j[(\overline\partial_{TX}\phi)\lrcorner K^{-1}\partial K,A]+(-1)^j\overline\partial_{TX}\phi\lrcorner\partial A\\
=&(-1)^j\{\partial_K,\phi\lrcorner\}\overline\partial_{\mathrm{End}(E)}A-\{\partial_K,\overline\partial_{TX}\phi\lrcorner\}A-[\phi\lrcorner F_{d_K},A].
\end{split}
\end{equation*}
Hence we have the desired equality.
\end{proof}

\begin{proposition} \label{bracket for DGLA}
The bracket $[\cdot,\cdot]_{T(E)}:L\times L\to L$ satisfies the following,
\begin{enumerate}
\item $ (A,\phi)\in L^i, (B,\psi)\in L^j (i, j\in\mathbb{Z})$, $[(A,\phi),(B,\psi)]_{T(E)}+(-1)^{pq}[(B,\psi),(A,\phi)]_{T(E)}=0$.
\item The graded Jacobi identity holds.
\end{enumerate}
\end{proposition}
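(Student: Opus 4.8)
The plan is to check the two bracket axioms separately, and in each to split the computation along the two summands of $L^i=\bigoplus_{p+q=i}A^{p,q}(\mathrm{End}(E))\oplus A^{0,i}(TX)$. Write $[(A,\phi),(B,\psi)]_{T(E)}=(\Lambda,[\phi,\psi])$, where $\Lambda$ is the $\mathrm{End}(E)$-valued component, a sum of the two terms built from $\{\partial_{K}^{\mathrm{End}(E)},\cdot\lrcorner\}$ and the canonical bracket term $-[A,B]$. The $A^{0,*}(TX)$-component of the bracket is literally the Schouten--Nijenhuijs bracket, so both axioms are inherited for that component from the known graded antisymmetry and graded Jacobi identity of $[\cdot,\cdot]_{SN}$. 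The whole content of the proposition therefore lies in the $\mathrm{End}(E)$-component $\Lambda$.

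For the graded antisymmetry, I would compute $\Lambda(A,\phi;B,\psi)+(-1)^{ij}\Lambda(B,\psi;A,\phi)$ directly. Under the swap $(A,\phi)\leftrightarrow(B,\psi)$ weighted by $(-1)^{ij}$, the two $\{\partial_{K}^{\mathrm{End}(E)},\cdot\lrcorner\}$-terms exchange roles and cancel in pairs once one uses the elementary congruences $(i+1)j\equiv ij+j$ and $(j+1)i\equiv ij+i\pmod 2$, while the remaining terms cancel because $[A,B]+(-1)^{ij}[B,A]=0$ is the graded antisymmetry of the canonical Lie bracket on $A^{*}(\mathrm{End}(E))$. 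This is pure sign bookkeeping with no analytic input.

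The graded Jacobi identity is the substantive part. With $a=(A,\phi)\in L^i$, $b=(B,\psi)\in L^j$, $c=(C,\chi)\in L^k$, I would expand the $\mathrm{End}(E)$-components of both sides of
\[
[a,[b,c]]_{T(E)}=[[a,b]_{T(E)},c]_{T(E)}+(-1)^{ij}[b,[a,c]_{T(E)}]_{T(E)}
\]
and sort the resulting monomials into three families: (i) terms built only from the canonical bracket, such as $[A,[B,C]]$ and its permutations; (ii) iterated operator terms of the form $\{\partial_{K}^{\mathrm{End}(E)},\cdot\lrcorner\}\{\partial_{K}^{\mathrm{End}(E)},\cdot\lrcorner\}$ applied to one of $A,B,C$; and (iii) mixed terms in which a single $\{\partial_{K}^{\mathrm{End}(E)},\cdot\lrcorner\}$ meets a canonical bracket. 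Family (i) cancels by the graded Jacobi identity of $[\cdot,\cdot]$ on $A^{*}(\mathrm{End}(E))$. The families (ii) and (iii) are exactly what Corollary~\ref{Cartan b vb} and Corollary~\ref{Leibniz a} are built to reconcile: whenever an outer bracket contributes a factor $\{\partial_{K}^{\mathrm{End}(E)},[\psi,\chi]\lrcorner\}$ --- produced because the inner bracket injects the Schouten bracket $[\psi,\chi]$ into a $TX$-slot --- I would rewrite it by Corollary~\ref{Cartan b vb} as the graded commutator of $\{\partial_{K}^{\mathrm{End}(E)},\psi\lrcorner\}$ and $\{\partial_{K}^{\mathrm{End}(E)},\chi\lrcorner\}$, matching the type-(ii) terms; and whenever a single $\{\partial_{K}^{\mathrm{End}(E)},\phi\lrcorner\}$ lands on a bracket $[B,C]$ coming from an inner $\mathrm{End}(E)$-component, I would distribute it by the graded Leibniz rule of Corollary~\ref{Leibniz a}, matching the type-(iii) terms. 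After these substitutions the three families close up and cancel.

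The main obstacle is combinatorial rather than conceptual: each of the three triple-bracket expressions expands into a large number of monomials once the two-term shape of $\Lambda$ is inserted twice, and the crux is to confirm that the Koszul signs emitted by Corollary~\ref{Cartan b vb} and Corollary~\ref{Leibniz a} are exactly those demanded by the definition of $[\cdot,\cdot]_{T(E)}$. I would tame the bookkeeping by keeping the tridegrees $(i,j,k)$ symbolic and grouping monomials according to which of $A,B,C$ carries the differential operators, checking cancellation group by group. It is worth noting that no curvature or Bianchi contribution enters here; those belong to the compatibility of $d_{T(E)}$ with the bracket rather than to the bracket axioms, which keeps the present computation self-contained.
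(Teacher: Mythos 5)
Your proposal is correct and follows essentially the same route as the paper: the paper likewise dismisses graded antisymmetry as immediate from the definition, expands the three triple brackets, and reduces the Jacobi identity to exactly the identities you cite --- Corollary~\ref{Cartan b vb} for the iterated $\{\partial_{K}^{\mathrm{End}(E)},\cdot\lrcorner\}$ terms, Corollary~\ref{Leibniz a} for the mixed terms, and the Jacobi identities of the canonical and Schouten--Nijenhuis brackets for the rest. Your observation that no curvature or Bianchi input is needed here is also consistent with the paper, where those enter only in the proof that $d_{T(E)}$ is a derivation.
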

\begin{proof}
We denote $\partial_{K}^{\mathrm{End}(E)}$ as $\partial_K$ in this proof.\par
$\mathit{1.}$ is obvious form the definition. We only prove $\mathit{2}.$\par
Let $(A,\phi)\in L^i$, $(B,\psi)\in L^j$ and $(C,\tau)\in L^k.$ We prove the following eqaution:
\begin{equation}
[(A,\phi),[(B,\psi),(C,\tau)]_{T(E)}]_{T(E)}=[[(A,\phi),(B,\psi)]_{T(E)},(C,\tau)]_{T(E)}+(-1)^{ij}[(B,\psi),[(A,\phi),(C,\tau)]_{T(E)}]_{T(E)}.
\end{equation}
We have,
\begin{equation}\label{bracket calculation}
\begin{split}
&[(A,\phi),[(B,\psi),(C,\tau)]_{T(E)}]_{T(E)}\\
=&[(A,\phi),((-1)^j\{\partial_K,\psi\lrcorner\}C-(-1)^{(j+1)k}\{\partial_K,\tau\lrcorner\}B-[B,C],[\psi,\tau])]_{T(E)}\\
=&((-1)^i\{\partial_K,\phi\lrcorner\}\{(-1)^j\{\partial_K,\psi\lrcorner\}C-(-1)^{(j+1)k}\{\partial_K,\tau\lrcorner\}B-[B,C]\}-(-1)^{(i+1)(j+k)}\{\partial_K,[\psi,\tau]\lrcorner\}A\\
&-[A,(-1)^j\{\partial_K,\psi\lrcorner\}C-(-1)^{(j+1)k}\{\partial_K,\tau\lrcorner\}B-[B,C]],[\phi,[\psi,\tau]]).\\
\textrm{and}&\\
&[[(A,\phi),(B,\psi)]_{T(E)},(C,\tau)]_{T(E)}\\
=&((-1)^{i+j}\{\partial,[\phi,\psi]\lrcorner\}C-(-1)^{(i+j+1)k})\{\partial_K,\tau\lrcorner\}\{(-1)^i\{\partial_K,\phi\lrcorner\}B-(-1)^{(i+1)j}\{\partial_K,\psi\lrcorner\}A-[A,B]\}\\
&-[(-1)^i\{\partial_K,\phi\lrcorner\}B-(-1)^{(i+1)j}\{\partial_K,\psi\lrcorner\}A-[A,B],C],[[\phi,\psi],\tau]).\\
&(-1)^{ij}[(B,\psi),[(A,\phi),(C,\tau)]_{T(E)}]_{T(E)}\\
\textrm{and}&\\
=&(-1)^{ij}((-1)^i\{\partial_K,\psi\lrcorner\}((-1)^i\{\partial_K,\phi\lrcorner\}C-(-1)^{(i+1)k}\{\partial_K,\phi\lrcorner\}A-[A,C]-(-1)^{(j+1)(i+k)}\{\partial_K,[\phi,\tau]\lrcorner\}B\\
&-[B,(-1)^i\{\partial_K,\psi\lrcorner\}((-1)^i\{\partial_K,\phi\lrcorner\}C-(-1)^{(i+1)k}\{\partial_K,\phi\lrcorner\}A-[A,C]],[\psi,[\phi,\tau]]).
\end{split}
\end{equation}
Hence by \eqref{bracket calculation}, we only have to prove the following equations,
\begin{equation*}
\begin{split}
\{\partial_K,[\phi,\psi]\lrcorner\}A&=\{\partial_K, \phi\lrcorner\}\{\partial_K, \psi\lrcorner\}A-(-1)^{jk}\{\partial_K, \psi\lrcorner\}\{\partial_K, \phi\lrcorner\}A,\\
\{\partial_K,\phi\lrcorner\}[A,B]&=[\{\partial_K,\phi\lrcorner\}A,B]+(-1)^{ik}[A,\{\partial_K,\phi\lrcorner\}B],\\
[A,[B,C]]&=[[A,B],C]+(-1)^{ij}[B,[A,C]],\\
[\phi,[\psi,\tau]]&=[[\phi,\psi],\tau]+(-1)^{ij}[\psi,[\phi,\tau]].
\end{split}
\end{equation*}
The above equations follow from corollary \ref{Cartan b vb} and  \ref{Leibniz a} and the fact that the Schouten-Nijenhuis bracket satisfies the Jacobi identity. Hence we proved that $[\cdot,\cdot]_{T(E)}$ satisfies the Jacobi identity.
\end{proof}

\begin{proposition}\label{d for DGLA}
$d_{T(E)}$ is a differential respect to the bracket $[\cdot,\cdot]_{T(E)}$, that is :
\begin{enumerate}
\item $d_{T(E)}(L^{i})\subset L^{i+1}.$
\item $d_{T(E)}\circ d_{T(E)}= 0.$
\item $(A,\phi)\in L^i, (B,\psi)\in L^j, d_{T(E)}[(A,\phi),(B,\psi)]_{T(E)}=[d_{T(E)}(A,\phi),(B,\psi)]_{T(E)}+(-1)^i[(A,\phi),d_{T(E)}(B,\psi)]_{T(E)}.$
\end{enumerate}
\end{proposition}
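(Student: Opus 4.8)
The plan is to verify in turn the three defining properties of a differential. Throughout I write
\[
d_{T(E)}=\begin{pmatrix}\overline\partial_{\mathrm{End}(E)}+[\theta,\cdot\,] & B+C\\ 0 & \overline\partial_{TX}\end{pmatrix},
\]
so that $d_{T(E)}(A,\phi)=(\overline\partial_{\mathrm{End}(E)}A+[\theta,A]+B(\phi)+C(\phi),\ \overline\partial_{TX}\phi)$, where the $\theta$-entry is read as the adjoint action $[\theta,\cdot\,]$, as dictated by the Maurer-Cartan equation of Proposition \ref{vanish a}. Item (1) is then a bidegree count: $\overline\partial_{\mathrm{End}(E)}$ and $\overline\partial_{TX}$ raise total degree by one by definition, $[\theta,\cdot\,]$ does so because $\theta\in A^{1,0}(\mathrm{End}(E))$, the map $B$ sends $A^{0,i}(TX)$ into $A^{0,i+1}(\mathrm{End}(E))$ since $v\lrcorner F_{d_K}$ contracts the $(1,0)$-slot of $F_{d_K}$, and $C$ sends $A^{0,i}(TX)$ into $A^{1,i}(\mathrm{End}(E))$ since $\{\partial_{K}^{\mathrm{End}(E)},v\lrcorner\}\theta$ raises the holomorphic degree by one; hence $d_{T(E)}(L^i)\subset L^{i+1}$.

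For item (2) I would square the matrix,
\[
d_{T(E)}^2=\begin{pmatrix}(\overline\partial_{\mathrm{End}(E)}+[\theta,\cdot\,])^2 & (\overline\partial_{\mathrm{End}(E)}+[\theta,\cdot\,])(B+C)+(B+C)\overline\partial_{TX}\\ 0 & \overline\partial_{TX}^2\end{pmatrix},
\]
and show every entry vanishes. The entry $\overline\partial_{TX}^2$ vanishes classically. The top-left entry expands as $\overline\partial_{\mathrm{End}(E)}^2+(\overline\partial_{\mathrm{End}(E)}[\theta,\cdot\,]+[\theta,\overline\partial_{\mathrm{End}(E)}\cdot\,])+[\theta,[\theta,\cdot\,]]$; the first summand is zero, the middle one collapses to $[\overline\partial_{\mathrm{End}(E)}\theta,\cdot\,]=0$ by the Leibniz rule together with $\overline\partial_{\mathrm{End}(E)}\theta=0$, and the last equals $\tfrac12[[\theta,\theta],\cdot\,]=0$ because $\theta\wedge\theta=0$. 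Applied to $v\in A^{0,i}(TX)$, the off-diagonal entry separates by bidegree into three pieces, which I treat independently: the $(0,i+2)$ piece $\overline\partial_{\mathrm{End}(E)}B(v)+B(\overline\partial_{TX}v)$ vanishes by the Bianchi identity $\overline\partial_{\mathrm{End}(E)}F_{d_K}=0$ and a Cartan identity for $\overline\partial_{\mathrm{End}(E)}(v\lrcorner F_{d_K})$ analogous to Lemma \ref{Cartan}; the $(1,i+1)$ piece $\overline\partial_{\mathrm{End}(E)}C(v)+[\theta,B(v)]+C(\overline\partial_{TX}v)$ collapses via Corollary \ref{Leibniz b} applied to $\theta$, where $\overline\partial_{\mathrm{End}(E)}\theta=0$ kills the leading term and leaves $-[v\lrcorner F_{d_K},\theta]$, which cancels $[\theta,B(v)]$ after rearranging signs; and the $(2,i)$ piece $[\theta,C(v)]$ vanishes upon applying Corollary \ref{Leibniz a} to $[\theta,\theta]=0$.

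Item (3) I would establish componentwise. On the $TX$-component both sides reduce to $\overline\partial_{TX}[\phi,\psi]=[\overline\partial_{TX}\phi,\psi]+(-1)^i[\phi,\overline\partial_{TX}\psi]$, i.e. the classical fact that $\overline\partial_{TX}$ is a derivation of the Schouten-Nijenhuys bracket. The $\mathrm{End}(E)$-component is the substantive part: one applies $\overline\partial_{\mathrm{End}(E)}+[\theta,\cdot\,]$ to the endomorphism entry of $[(A,\phi),(B,\psi)]_{T(E)}$, adds $(B+C)[\phi,\psi]$, and matches the result against the endomorphism entries of $[d_{T(E)}(A,\phi),(B,\psi)]_{T(E)}+(-1)^i[(A,\phi),d_{T(E)}(B,\psi)]_{T(E)}$. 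The terms fall into families, each closed by one corollary: the commutators $\overline\partial_{\mathrm{End}(E)}\{\partial_{K}^{\mathrm{End}(E)},\psi\lrcorner\}A$ and $\overline\partial_{\mathrm{End}(E)}\{\partial_{K}^{\mathrm{End}(E)},\phi\lrcorner\}B$ are resolved by Corollary \ref{Leibniz b}, which manufactures precisely the $\{\partial_{K}^{\mathrm{End}(E)},\overline\partial_{TX}\psi\lrcorner\}A$-type terms and the curvature commutators that the right-hand side requires; the summand $(B+C)[\phi,\psi]$ splits, its curvature part $[\phi,\psi]\lrcorner F_{d_K}$ being rewritten by Corollary \ref{Cartan curvature} and its Higgs part $\{\partial_{K}^{\mathrm{End}(E)},[\phi,\psi]\lrcorner\}\theta$ by Corollary \ref{Cartan b vb}, so as to reproduce exactly the $B(\phi),B(\psi)$ and $C(\phi),C(\psi)$ contributions occurring inside the two right-hand brackets; and the remaining $[\theta,\cdot\,]$- and $[A,B]$-terms close up through Corollary \ref{Leibniz a}, the graded Jacobi identity for the fibrewise bracket $[\cdot,\cdot]$, and once more $\overline\partial_{\mathrm{End}(E)}\theta=0$.

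The main obstacle is exactly this $\mathrm{End}(E)$-component of item (3): it is a long, mechanical but sign-sensitive reconciliation in which each term produced by the four corollaries must be paired off with a matching term from the two right-hand brackets, leaving no remainder. I would keep it under control by fixing the degree conventions $(A,\phi)\in L^i$, $(B,\psi)\in L^j$ at the outset, expanding every $\{\partial_{K}^{\mathrm{End}(E)},\cdot\lrcorner\}$ and carrying the signs $(-1)^{(i+1)j}$ and their relatives explicitly; the bidegree decomposition already used for item (2) keeps the bookkeeping honest, since terms of distinct bidegree cannot cancel against one another.
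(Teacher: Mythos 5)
Your proposal is correct and follows essentially the same route as the paper: item (3) is reduced to exactly the list of identities the paper writes down, each discharged by Corollaries \ref{Cartan b vb}, \ref{Cartan curvature}, \ref{Leibniz a}, \ref{Leibniz b} together with the Jacobi and Leibniz rules for $[\cdot,\cdot]$, $\overline\partial_{\mathrm{End}(E)}$ and $\overline\partial_{TX}$. The only difference is cosmetic: for item (2) you organize the square by matrix entry and bidegree and invoke Corollaries \ref{Leibniz a} and \ref{Leibniz b} (plus $\overline\partial_{\mathrm{End}(E)}\theta=0$, $\theta\wedge\theta=0$ and Bianchi), whereas the paper groups the terms by which of the two summand matrices contributes and verifies the middle and last groups by direct local computation; your version is, if anything, slightly cleaner and is stated in arbitrary degree rather than only on $L^1$.
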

\begin{proof}
We denote $\partial_{K}^{\mathrm{End}(E)}$ as $\partial_K$ in this proof.\par
$\mathit{1.}$ is obvious from the definition of $d_{T(E)}$.\par
We prove $\mathit{2.}$ for $d_{T(E)}\circ d_{T(E)}:L^1\to L^3.$ Let $(A,\phi)\in L^1$. We calculate $d_{T(E)}\circ d_{T(E)}(A,\phi).$ 
\begin{align*}
d_{T(E)}(A,\phi)=
\begin{pmatrix}
\overline\partial_{\mathrm{End}(E)}A-\phi\lrcorner F_{d_K}\\
\overline\partial_{T(X)}
\end{pmatrix}
+
\begin{pmatrix}
[\theta,A] + \{\partial_K,\phi\lrcorner\}\theta\\
0
\end{pmatrix}.
\end{align*}
\begin{align}
\notag d_{T(E)}&\bigg(
\begin{pmatrix}
\overline\partial_{\mathrm{End}(E)}A-\phi\lrcorner F_{d_K}\\
\overline\partial_{T(X)}\phi
\end{pmatrix}
+
\begin{pmatrix}
[\theta,A] + \{\partial_K,\phi\lrcorner\}\theta\\
0
\end{pmatrix}
\bigg)\\
=&
\begin{pmatrix}
\overline\partial_{\mathrm{End}(E)}&B\\
0 & \overline\partial_{T(X)}
\end{pmatrix}
\begin{pmatrix}
\overline\partial_{\mathrm{End}(E)}A-\phi\lrcorner F_{d_K}\\
\overline\partial_{T(X)}\phi
\end{pmatrix}
\label{A}
\\
&+
\begin{pmatrix}
\theta & C\\
0 & 0
\end{pmatrix}
\begin{pmatrix}
\overline\partial_{\mathrm{End}(E)}A-\phi\lrcorner F_{d_K}\\
\overline\partial_{TX}\phi
\end{pmatrix}
+
\begin{pmatrix}
\overline\partial_{\mathrm{End}(E)}&B\\
0 & \overline\partial_{TX}
\end{pmatrix}
\begin{pmatrix}
[\theta,A] + \{\partial_K,\phi\lrcorner\}\theta\\
0
\end{pmatrix}
\label{B}
\\
&+
\begin{pmatrix}
\theta & C\\
0 & 0
\end{pmatrix}
\begin{pmatrix}
[\theta,A] + \{\partial_K,\phi\lrcorner\}\theta\\
0
\end{pmatrix}
\label{C}.
\end{align}
Let us show \eqref{A}=\eqref{B}=\eqref{C}=0
\begin{align*}
\eqref{A}&=
\begin{pmatrix}
\overline\partial_{\mathrm{End}(E)}&B\\
0 & \overline\partial_{TX}
\end{pmatrix}
\begin{pmatrix}
\overline\partial_{\mathrm{End}(E)}A-\phi\lrcorner F_{d_K}\\
\overline\partial_{TX}\phi
\end{pmatrix}
=
\begin{pmatrix}
\overline\partial_{\mathrm{End}(E)}\circ\overline\partial_{\mathrm{End}(E)}A + \overline\partial_{\mathrm{End}(E)}(\phi\lrcorner F_{d_K}) + B(\overline\partial_{T(X)}\phi)\\
\overline\partial_{TX}\circ\overline\partial_{TX}\phi
\end{pmatrix}\\
&=
\begin{pmatrix}
\overline\partial_{TX}\phi\lrcorner F_{d_K} + \phi\lrcorner\overline\partial_{\mathrm{End}(E)}F_{d_k}-\overline\partial_{TX}\phi\lrcorner F_{d_K}\\
0
\end{pmatrix}
=
\begin{pmatrix}
 \phi\lrcorner\overline\partial_{\mathrm{End}(E)}F_{d_k}\\
0
\end{pmatrix}=0. 
\end{align*}
The last equation comes from Bianchi identity. Next we show that \eqref{B}=0
\begin{align*}
\eqref{B}&=
\begin{pmatrix}
\theta & C\\
0 & 0
\end{pmatrix}
\begin{pmatrix}
\overline\partial_{\mathrm{End}(E)}A-\phi\lrcorner F_{d_K}\\
\overline\partial_{TX}\phi
\end{pmatrix}
+
\begin{pmatrix}
\overline\partial_{\mathrm{End}(E)}&B\\
0 & \overline\partial_{TX}
\end{pmatrix}
\begin{pmatrix}
[\theta,A] + \{\partial_K,\phi\lrcorner\}\theta\\
0
\end{pmatrix}\\
&=
\begin{pmatrix}
[\theta,\overline\partial_{\mathrm{End}(E)} A]-[\theta,\phi\lrcorner F_{d_K}]+\{\partial_{K},\overline\partial_{TX}\phi\lrcorner\}\theta\\
0
\end{pmatrix}
+
\begin{pmatrix}
\overline\partial_{\mathrm{End}(E)}[\theta,A] + \overline\partial_{\mathrm{End}(E)}(\{\partial_K,\phi\lrcorner\}\theta)\\
0
\end{pmatrix}.
\end{align*}
Since $\theta$ is a Higgs field, $\overline\partial_{\mathrm{End}(E)}[\theta,A]=-[\theta,\overline\partial_{\mathrm{End}(E)}A]$.  Hence we have
\begin{equation}
\eqref{B}=
\begin{pmatrix}
-[\theta,\phi\lrcorner F_{d_K}]+\{\partial_{K},\overline\partial_{TX}\phi\lrcorner\}\theta+\overline\partial_{\mathrm{End}(E)}(\{\partial_K,\phi\lrcorner\}\theta)\\
0
\end{pmatrix}.\label{D}
\end{equation}
By direct computation using the local realization we have,
\begin{align}
\{\partial_{K},\overline\partial_{TX}\phi\lrcorner\}\theta&=\notag\partial_K (\overline\partial_{TX}\phi\lrcorner\theta)+\overline\partial_{TX}\phi\lrcorner(\partial_K \theta)\\
&=\partial (\overline\partial_{TX}\phi\lrcorner\theta)+[K^{-1}\partial K, \overline\partial_{TX}\phi\lrcorner\theta]+\overline\partial_{TX}\phi\lrcorner(\partial\theta + [K^{-1}\partial K, \theta]).\label{E}
\end{align}
\begin{align}
\overline\partial_{\mathrm{End}(E)}(\{\partial_K,\phi\lrcorner\}\theta)&=\notag\overline\partial_{\mathrm{End}(E)}\{\partial(\phi\lrcorner\theta)+[K^{-1}\partial K,\phi\lrcorner\theta]-\phi\lrcorner(\partial\theta + [K^{-1}\partial K,\theta])\}\\
&\notag=-\partial\overline\partial(\phi\lrcorner\theta)+[F_{d_{K}},\phi\lrcorner\theta]-[K^{-1}\partial K,\overline\partial_{\mathrm{End}(E)}(\phi\lrcorner\theta)]-\overline\partial_{TX}\phi\lrcorner(\partial\theta+[K^{-1}\partial K,\theta])-\phi\lrcorner[F_{d_{K}},\theta]\\
&=-\partial(\overline\partial_{TX}\phi\lrcorner\theta)-[\phi\lrcorner F_{d_{K}},\theta]-[K^{-1}\partial K, \overline\partial_{TX}\phi\lrcorner\theta]-\overline\partial_{TX}\phi\lrcorner(\partial\theta+[K^{-1}\partial K,\theta])\label{F}.
\end{align}
Hence by \eqref{D}, \eqref{E}, \eqref{F} we obtain that \eqref{B}=0.\par
Next, we show \eqref{C}=0. 
\begin{align}
\eqref{C}=
\begin{pmatrix}
\theta & C\\
0 & 0
\end{pmatrix}
\begin{pmatrix}
[\theta, A]+\{\partial_K,\phi\lrcorner\}\theta\\
0
\end{pmatrix}
=
\begin{pmatrix}
[\theta,\{\partial_K,\phi\lrcorner\}\theta]\\
0
\end{pmatrix}\label{G}.
\end{align}
 By direct computation using the local realization we have,
 \begin{align}
 [\theta,\{\partial_K,\phi\lrcorner\}\theta]=&\notag\theta\wedge\{\partial_K,\phi\lrcorner\}\theta-\{\partial_K,\phi\lrcorner\}\theta\wedge\theta\\
 \notag=&\theta\wedge\{\partial(\phi\lrcorner\theta)+[K^{-1}\partial K,\phi\lrcorner\theta]-\phi\lrcorner\partial\theta-\phi\lrcorner[K^{-1}\partial K,\theta]\}\\
 &\notag-\{\partial(\phi\lrcorner\theta)+[K^{-1}\partial K,\phi\lrcorner]\theta-\phi\lrcorner\partial\theta-\phi\lrcorner[K^{-1}\partial K,\theta]\}\wedge\theta\\
 \notag=&\theta\wedge\{\partial(\phi\lrcorner\theta)-\phi\lrcorner\partial\theta-[\phi\lrcorner K^{-1}\partial K,\theta]\}-\{\partial(\phi\lrcorner\theta)-\phi\lrcorner\partial\theta-[\phi\lrcorner K^{-1}\partial K,\theta]\}\wedge\theta\\
 =&\theta\wedge\partial(\phi\lrcorner\theta)-\theta\wedge\phi\lrcorner\partial\theta-\partial(\phi\lrcorner\theta)\wedge\theta+(\phi\lrcorner\partial\theta)\wedge\theta\label{H}.
  \end{align}
 Since $\theta\wedge\theta=0$,
 \begin{align}
 0=&\notag\partial(\phi\lrcorner(\theta\wedge\theta))-\phi(\partial(\theta\wedge\theta))\\
 =&\notag\partial(\phi\lrcorner\theta)\wedge\theta-(\phi\lrcorner\theta)\wedge\partial\theta+\partial\theta\wedge\phi\lrcorner\theta-\theta\wedge\partial(\phi\lrcorner\theta)-(\phi\lrcorner\partial\theta)\wedge\theta-\partial\theta\wedge(\phi\lrcorner\theta)+(\phi\lrcorner\theta)\wedge\partial\theta+\theta\wedge(\phi\lrcorner\partial\theta)\\
 =&\partial(\phi\lrcorner\theta)\wedge\theta-\theta\wedge\partial(\phi\lrcorner\theta)-(\phi\lrcorner\partial\theta)\wedge\theta+\theta\wedge\phi\lrcorner\partial\theta\label{I}.
    \end{align}
Hence by \eqref{G}, \eqref{H} and \eqref{I} we obtain that \eqref{C}=0. This completes the proof of $\mathit{2}.$\par
Next we prove $\mathit{3}.$\par
Let $(A,\phi)\in L^i$ and $(B,\psi)\in L^j.$    We have
\begin{equation*}
\begin{split}
&d_{T(E)}[(A,\phi),(B,\psi)]_{T(E)}\\
=&
\begin{pmatrix}
(\overline\partial_{\mathrm{End}(E)}+\theta)((-1)^i\{\{\partial_K,\phi\lrcorner\}B-(-1)^{(i+1)j}\{\partial_K,\psi\lrcorner\}A-[A,B]\})+(-1)^{i+j}[\phi,\psi]\lrcorner F_{d_K}+\{\partial_K,[\phi,\psi]\lrcorner\}\theta\\
\overline\partial_{TX}[\phi,\psi]
\end{pmatrix}
\end{split}
\end{equation*} 
and
\begin{equation*}
\begin{split}
&[d_{T(E)}(A,\phi),(B,\psi)]_{T(E)}\\
=&
((-1)^{i+1}\{\partial_K,\overline\partial_{TX}\phi\lrcorner\}B-(-1)^{(i+2)j}\{\partial_K,\psi\lrcorner\}(\overline\partial_{\mathrm{End}(E)}A+(-1)^i\phi\lrcorner F_{d_K}+[\theta,A]+\{\partial_K,\phi\lrcorner\}\theta\\
&-[\overline\partial_{\mathrm{End}(E)}A+(-1)^i\phi\lrcorner F_{d_K}+[\theta,A]+\{\partial_K,\phi\lrcorner\}\theta,B]
,[\overline\partial_{TX}\phi,\psi])\\
\textrm{and}&\\
&(-1)^i[(A,\phi),d_{T(E)}(B,\psi)]_{T(E)}\\
=&(\{\partial_K,\phi\lrcorner\}(\overline\partial_{\mathrm{End}(E)}B+(-1)^{j}\psi\lrcorner F_{d_K}+[\theta,B]+\{\partial_K,\phi\lrcorner\}\theta)\\
&-(-1)^{(i+1)(j+1)+i}\{\partial_K,\overline\partial_{TX}\psi\lrcorner\}A-(-1)^i[A,\overline\partial_{\mathrm{End}(E)}B+(-1)^{j}\psi\lrcorner F_{d_K}+[\theta,B]+\{\partial_K,\psi\lrcorner\}\theta],(-1)^i[\phi,\overline\partial_{TX}\psi]).
\end{split}
\end{equation*}
Hence by the above equations, we have to prove the following,
\begin{equation*}
\begin{split}
\overline\partial_{\mathrm{End}(E)}\{\partial_K,\phi\lrcorner\}A&=(-1)^j\{\partial_K,\phi\}\overline\partial_{\mathrm{End}(E)}A-\{\partial_K,\overline\partial_{TX}\phi\lrcorner\}A-[\phi\lrcorner F_{d_K},A],\\
\{\partial_K,\phi\lrcorner\}[\theta,A]&=[\{\partial_K,\phi\lrcorner\}\theta,A]+(-1)^{i}[\theta,\{\partial_K,\phi\lrcorner\}A],\\
[\phi,\psi]\lrcorner F_{d_K}&=(-1)^i\{\partial_K,\phi\lrcorner\}\psi\lrcorner F_{d_K}-(-1)^{ij+j}\{\partial_K,\psi\lrcorner\}\phi\lrcorner F_{d_K}\\
[\theta,[A,B]]&=[[\theta,A],B]+(-1)^i[A,[\theta,B]],\\
\overline\partial_{\mathrm{End}(E)}[A,B]&=[\overline\partial_{\mathrm{End}(E)}A,B]+(-1)^i[A,\overline\partial_{\mathrm{End}(E)}B],\\
\overline\partial_{TX}[\phi,\psi]&=[\overline\partial_{TX}\phi,\psi]+(-1)^i[\phi,\overline\partial_{TX}\psi].
\end{split}
\end{equation*}
These equations follow from the Corollary \ref{Cartan b vb}, \ref{Cartan curvature}, \ref{Leibniz a}, \ref{Leibniz b} and the fact that $\overline\partial_{\mathrm{End}(E)}$ and $\overline\partial_{TX}$ satisfies the Leibniz rule and the canonical bracket satisfies the Jacobi identity. 
 \end{proof}
Proposition \ref{bracket for DGLA} and  \ref{d for DGLA} shows us that $(L,[\cdot,\cdot]_{T(E)},d_{T(E)})$ is a DGLA. Hence we proved Theorem 3.1.
Combining theorem \ref{N-N}, Proposition \ref{vanish a}, Theorem \ref{DGLA}, we have,
\begin{theorem}\label{DGLA for Higgs bundle}
Given a holomorphic-Higgs pair $(X,E,\theta)$ and a smooth family of elements $\{A_{t},B_{t},\phi_{t}\}_{t\in\Delta}\subset A^{0,1}(\mathrm{End}(E))\oplus A^{1,0}(\mathrm{End}(E))\oplus A^{0,1}(TX)$.
Then, $(A_{t},B_{t},\phi_{t})$ defines a holomorphic-Higgs pair if and only if $(A_{t},B_{t},\phi_{t})$ satisfies the Maurer-Cartan:
\begin{align}\label{eq DGLA for Higgs bundle}
d_{T(E)}(A_{t}+B_{t},\phi_{t})-\dfrac{1}{2}[(A_{t}+B_{t},\phi_{t}),(A_{t}+B_{t},\phi_{t})]=0.
\end{align}
\end{theorem}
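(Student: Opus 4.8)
The plan is to reduce the Maurer-Cartan equation \eqref{eq DGLA for Higgs bundle} to the pair of equations appearing in Proposition \ref{vanish a}, and then to run the equivalence chain: Maurer-Cartan $\iff$ those two equations $\iff \overline D_t^2=0 \iff$ holomorphic-Higgs pair. Write $a_t:=A_t+B_t$. Since $L=A^*(\mathrm{End}(E))\oplus A^{0,*}(TX)$ and both $d_{T(E)}$ and $[\cdot,\cdot]_{T(E)}$ respect this decomposition, I would split \eqref{eq DGLA for Higgs bundle} into its $\mathrm{End}(E)$-component and its $TX$-component and treat each separately. Evaluating the matrix form of $d_{T(E)}$ on $(a_t,\phi_t)\in L^1$ (so that $B(\phi_t)=-\phi_t\lrcorner F_{d_K}$ and $C(\phi_t)=\{\partial_K,\phi_t\lrcorner\}\theta$, while the diagonal $\theta$-entry acts as $[\theta,a_t]$), the differential contributes
\begin{equation*}
d_{T(E)}(a_t,\phi_t)=\left(\overline\partial_{\mathrm{End}(E)}a_t-\phi_t\lrcorner F_{d_K}+[\theta,a_t]+\{\partial_K,\phi_t\lrcorner\}\theta,\ \overline\partial_{TX}\phi_t\right).
\end{equation*}

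The key computation is the self-bracket. I would substitute $(a_t,\phi_t)$ into both slots of $[\cdot,\cdot]_{T(E)}$ with $i=j=1$. The two first-order terms of the bracket coincide on the diagonal, producing a factor of $2$ that is exactly cancelled by the $-\tfrac12$ in front; explicitly
\begin{equation*}
-\tfrac12[(a_t,\phi_t),(a_t,\phi_t)]_{T(E)}=\left(\{\partial_K,\phi_t\lrcorner\}a_t+\tfrac12[a_t,a_t],\ -\tfrac12[\phi_t,\phi_t]\right).
\end{equation*}
Adding the two displays, the $\mathrm{End}(E)$-component of \eqref{eq DGLA for Higgs bundle} becomes verbatim the first equation of Proposition \ref{vanish a} (upon restoring $a_t=A_t+B_t$), while the $TX$-component becomes the second equation $\overline\partial_{TX}\phi_t-\tfrac12[\phi_t,\phi_t]=0$. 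Hence \eqref{eq DGLA for Higgs bundle} holds if and only if both equations of Proposition \ref{vanish a} hold.

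To close the equivalence I would invoke Proposition \ref{vanish a}, which states that these two equations are equivalent to $\overline D_t^2=0$ for $\overline D_t=\overline\partial_E+\{\partial_K,\phi_t\lrcorner\}+\theta+A_t+B_t$. The implication $\overline D_t^2=0\Rightarrow$ holomorphic-Higgs pair is Proposition \ref{N-N} (the required Leibniz rule holds by construction of $\overline D_t$), and the reverse implication is furnished by the propositions of Section 2, which show that a genuine family of holomorphic-Higgs pairs produces a $\overline D_t$ with $\overline D_t^2=0$. Chaining these equivalences establishes the theorem.

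The main obstacle is purely the coefficient-and-sign bookkeeping in the self-bracket: one must check that the two off-diagonal terms of $[\cdot,\cdot]_{T(E)}$ genuinely coincide when both entries are equal, so that the doubling against $\tfrac12$ leaves a single clean term $\{\partial_K,\phi_t\lrcorner\}a_t$, and that the degree-dependent signs $(-1)^i$ and $(-1)^{(i+1)j}$ evaluate correctly at $i=j=1$. No deeper identity is needed at this stage: the Cartan-type identities of Corollaries \ref{Cartan b vb}, \ref{Cartan curvature}, \ref{Leibniz a} and \ref{Leibniz b} were already consumed in establishing that $d_{T(E)}$ is a DGLA differential (Theorem \ref{DGLA}), so the present theorem is simply the translation of the abstract Maurer-Cartan equation through the explicit definitions of $d_{T(E)}$ and $[\cdot,\cdot]_{T(E)}$.
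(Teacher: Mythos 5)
Your proposal is correct and follows exactly the route the paper intends: the paper's own proof is the single sentence ``Combining Theorem \ref{N-N}, Proposition \ref{vanish a}, Theorem \ref{DGLA}, we have\dots'', and your expansion of $d_{T(E)}(a_t,\phi_t)$ and of the self-bracket at $i=j=1$ (where the two first-order terms double and cancel the $\tfrac12$) reproduces verbatim the two equations of Proposition \ref{vanish a}, after which the chain through $\overline D_t^2=0$ and Proposition \ref{N-N} closes the argument. You have in fact supplied more detail than the paper does.
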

We now give the proof of proposition \ref{vanish a}.
\begin{proof}[Proof of Proposition \ref{vanish a}]
We calculate $\overline D^2$. Applying Corollary \ref{Cartan b vb}, \ref{Leibniz b}, we have,
\begin{equation*}
\begin{split}
\overline D^2=&(\overline\partial_{E}+\{\partial_K,\phi\lrcorner\}+A+\theta+B)^2\\
=&\overline\partial_E\overline\partial_E +\overline\partial_E\{\partial_K,\phi\lrcorner\}+\{\partial_K,\phi\lrcorner\}\overline\partial_E+\{\partial_K,\phi\lrcorner\}\{\partial_K,\phi\lrcorner\}+\{\partial_K,\phi\lrcorner\}(A+\theta+B) +(A+\theta+B)\{\partial_K,\phi\lrcorner\}\\
&+\overline\partial_E(A+\theta+B)+(A+\theta+B)\overline\partial_E+[\theta,A+B]+\dfrac{1}{2}[A+B,A+B]\\
=&-\{\partial_K,\overline\partial_{TX}\phi\lrcorner\}-\phi\lrcorner F_{d_K}+\dfrac{1}{2}\{\partial_K,[\phi,\phi]\lrcorner\}+\overline\partial_{\mathrm{End}(E)}(\theta+A+B)+\{\partial_{K}^{\mathrm{End}(E)},\phi\lrcorner\}(\theta+A+B)\\
&+[\theta,A+B]+\dfrac{1}{2}[A+B,A+B]\\
=&-\{\partial_K,(\overline\partial_{TX}\phi-\dfrac{1}{2}[\phi,\phi])\lrcorner\}\\
&+\overline\partial_{\mathrm{End}(E)}(A+B)-\phi\lrcorner F_{d_K}+\{\partial_{K}^{\mathrm{End}(E)},\phi\lrcorner\}\theta+[\theta,A+B]+\{\partial_{K}^{\mathrm{End}(E)},\phi\lrcorner\}(A+B)+\dfrac{1}{2}[A+B,A+B].
\end{split}
\end{equation*}
Hence by the above calculation, $\overline D^2=0$ is equivalent to the following equations:
\begin{equation*}
\left\{
\begin{aligned}
 &\overline\partial_{\mathrm{End}(E)}(A+B)-\phi\lrcorner F_{d_{K}}+[\theta,A +B]+\{\partial_{K}^{\mathrm{End}(E)},\phi\lrcorner\}\theta+\{\partial_{K}^{\mathrm{End}(E)},\phi\lrcorner\}(A+B)+\frac{1}{2}[A+B,A+B]=0,\\
&\overline\partial_{TX}\phi-\frac{1}{2}[\phi,\phi]=0.\\
\end{aligned}
\right.
\end{equation*}
Hence we have the proof.
\end{proof}

\section{Kuranishi Family and Obstruction}\label{Ku}
\subsection{Kuranishi famliy}\label{Kuranishi family}
In \cite{Ku} Kuranishi constructed a universal family for an arbitrary complex manifold $X$ over a possible singular analytic space. In this section, we want to construct a family of holomorphic-Higgs pairs over a certain singular space which becomes a universal family in this context.\par
Here we recall some differential operators and inequality we need. These are commonly used in the Hodge theory. We choose a hermitian metric $g$ on $X$ and a hermitian metric $K$ on $E$. Using these two metrics, we can define an inner product $(\cdot,\cdot)$ on $L=\oplus_{i}L^i$. We remark that $L^i$ and $L^j$ are orthogonal with respect to this inner product. We first define the formal adjoint of $d_{T(E)}$ with respect to $(\cdot,\cdot)$ by,
\begin{center}
$(d_{T(E)}\alpha,\beta)=(\alpha,d^{*}_{T(E)}\beta)$.
\end{center}
Then the $Laplacian$ $\Delta_{T(E)}$ is defined by,
\begin{center}
$\Delta_{T(E)}=d_{T(E)}\circ d^{*}_{T(E)}+d^{*}_{T(E)}\circ d_{T(E)}$.
\end{center}
This is an elliptic self-adjoint operator. Hence by \cite[Chapter 4, Theorem 4.12]{Wells}, it has a finite dimension kernel $\mathbb{H}^i$. We call the elements of $\mathbb{H}^i$ a \textit{harmonic form}. Let $\widetilde L^{i}$ be a completion of $L^i$ respect to the inner product $(\cdot,\cdot)$, and let $H: L^i\to\mathbb{H}^i$ be the harmonic projection. The Green's operator $G:L^i\to L^i$ is defined by
\begin{center}
$I=H+\Delta_{T(E)}\circ G=H+G\circ\Delta_{T(E)}$
\end{center}
where $I$ is identity for $L^i$. $H$ and $G$ can be extend to the bounded operator $H,G:\widetilde L^i\to\widetilde L^i$. $G$ commutes with $d_{T(E)}$ and $d^{*}_{T(E)}$.
\par
Now let $\{\eta_1,\dots,\eta_n\}\subset \mathbb{H}^1$ be a basis and $\epsilon_1(t):=\sum_{j=1}^{n}t_j\eta_j\in\mathbb{H}^1$. Consider the equation
\begin{center}
$\epsilon(t)=\epsilon_1(t)+\dfrac{1}{2}d_{ T(E)}^*G[\epsilon(t),\epsilon(t)].$
\end{center}
We define H$\ddot{\mathrm{o}}$lder norm $\norm{\cdot}_{k,\alpha}$ as in \cite{Morrow-Kodaira_book}. We also have the following inequality:
\begin{center}
$\norm{d^{*}_{T(E)}\epsilon}_{k,\alpha}\leq C_{1}\norm{\epsilon}_{k+1.\alpha},$\par
$\norm{[\epsilon,\delta]}_{k,\alpha}\leq C_{2}\norm{\epsilon}_{k+1,\alpha}\norm{\delta}_{k+1,\alpha}.$
\end{center}
In \cite{Dou}, Douglis and Nirenberg proved the following a priori estimate:
\begin{center}
$\norm{\epsilon}_{k,\alpha} \leq C_3(\norm{\Delta_{T(E)}\epsilon}_{k-2,\alpha}+\norm{\epsilon}_{0,\alpha}).$
\end{center}
Applying these and following the proof of \cite[Chapter 4, Proposition 2.3]{Morrow-Kodaira_book} one can deduce an estimate for the Green’s operator $G$:
\begin{center}
$\norm{G\epsilon}_{k,\alpha}\leq C_4\norm{\epsilon}_{k-2,\alpha},$
\end{center}
where all $C_i$'s are positive constants which depend only on $k$ and $\alpha$.\par
Then by applying the proof of \cite[Chapter 4, Proposition 2.4]{Morrow-Kodaira_book} or using the implicit function theorem for Banach spaces as in \cite{Ku}, we obtain a unique solution $\epsilon(t)$ which satisfies the equation
\begin{center}
$\epsilon(t)=\epsilon_1(t)+\dfrac{1}{2}d_{T(E)}^*G[\epsilon(t),\epsilon(t)]_{T(E)},$
\end{center}
which is analytic in the variable in $t$. The solution $\epsilon(t)$ is also a smooth section for $L^1$. Indeed,  by applying the Laplacian to the above equation, we get
\begin{center}
$\Delta_{T(E)}\epsilon(t)-\dfrac{1}{2}d_{T(E)}^*[\epsilon(t),\epsilon(t)]_{T(E)}=0.$
\end{center}
Since $\epsilon(t)$ is holomorphic in $t$, we have,
\begin{center}
$\sum_{i,j}\dfrac{\partial^2}{\partial t_i \partial \overline t_j}\epsilon(t)=0.$
\end{center}
Hence we have,
\begin{center}
$(\Delta_{T(E)}+\sum_{i,j}\dfrac{\partial^2}{\partial t_i \partial \overline t_j})\epsilon(t)-\dfrac{1}{2}d_{T(E)}^*[\epsilon(t),\epsilon(t)]_{T(E)}=0.$
\end{center}
Since the operator 
\begin{center}
$\Delta_{T(E)} + \sum_{i,j}\dfrac{\partial^2}{\partial t_i \partial \overline t_j}$
\end{center}
is elliptic, we can say that $\epsilon(t)$ is smooth by elliptic regularity.
\par
From the discussion so far, we have,
\begin{proposition}
Let $\{\eta_1,\dots,\eta_n\}\subset\mathbb{H}^1$ be a basis. Let $t=(t_1,\dots,t_n)\in\mathbb{C}^n$ and $\epsilon_1(t):=\sum_it_i\eta_i.$ For all $|t|<<1$ we have a $\epsilon(t)$ such that $\epsilon(t)$ satisfies the following equation:
\begin{equation*}
\epsilon(t)=\epsilon_1(t)+\dfrac{1}{2}d_{T(E)}^*G[\epsilon(t),\epsilon(t)]_{T(E)}.
\end{equation*}
Moreover, $\epsilon(t)$ is holomorphic with respect to the variable $t$.
\end{proposition}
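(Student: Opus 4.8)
The plan is to realize $\epsilon(t)$ as the unique fixed point of a nonlinear operator and to solve for it by the Banach contraction principle, in the spirit of \cite[Chapter 4, Proposition 2.4]{Morrow-Kodaira_book} and \cite{Ku}. Fix a large integer $k$ and work in the Banach space $B$ obtained by completing $L^1$ with respect to the H\"older norm $\norm{\cdot}_{k+1,\alpha}$. For fixed $t$ define
\begin{equation*}
\mathcal{F}_t(\epsilon):=\epsilon_1(t)+\dfrac{1}{2}d_{T(E)}^*G[\epsilon,\epsilon]_{T(E)},
\end{equation*}
so that solving the stated equation amounts to finding a fixed point of $\mathcal{F}_t$. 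The first task is to check that $\mathcal{F}_t$ is a well-defined self-map of a small ball in $B$, and here the derivative bookkeeping is the key. Since the bracket $[\cdot,\cdot]_{T(E)}$ involves $\partial_K^{\mathrm{End}(E)}$ it costs one derivative, $G$ gains two, and $d_{T(E)}^*$ costs one, so the composite $d_{T(E)}^*G[\cdot,\cdot]_{T(E)}$ preserves the class $C^{k+1,\alpha}$. Quantitatively, chaining the three displayed estimates (the $d_{T(E)}^*$-estimate with index shifted up by one, the Green-operator estimate $\norm{G\phi}_{k+2,\alpha}\leq C_4\norm{\phi}_{k,\alpha}$, and the bracket estimate) yields a constant $C=C_1C_2C_4$ with
\begin{equation*}
\norm{d_{T(E)}^*G[\epsilon,\epsilon]_{T(E)}}_{k+1,\alpha}\leq C\norm{\epsilon}_{k+1,\alpha}^2.
\end{equation*}

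Next I would fix the radius. Because $\epsilon_1(t)=\sum_i t_i\eta_i$ is linear in $t$, one has $\norm{\epsilon_1(t)}_{k+1,\alpha}=O(|t|)$; choosing $r<(2C)^{-1}$ and then $|t|$ small enough that $\norm{\epsilon_1(t)}_{k+1,\alpha}\leq r/2$ guarantees that $\mathcal{F}_t$ maps the closed ball $\overline{B}_r=\{\epsilon:\norm{\epsilon}_{k+1,\alpha}\leq r\}$ into itself. For the contraction estimate, writing $[\epsilon,\epsilon]_{T(E)}-[\epsilon',\epsilon']_{T(E)}=[\epsilon-\epsilon',\epsilon]_{T(E)}+[\epsilon',\epsilon-\epsilon']_{T(E)}$ and applying the same chain of estimates gives
\begin{equation*}
\norm{\mathcal{F}_t(\epsilon)-\mathcal{F}_t(\epsilon')}_{k+1,\alpha}\leq Cr\norm{\epsilon-\epsilon'}_{k+1,\alpha},
\end{equation*}
so after shrinking $r$ so that $Cr\leq\tfrac12$, the map $\mathcal{F}_t$ is a contraction of $\overline{B}_r$. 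The Banach fixed-point theorem then produces a unique $\epsilon(t)\in\overline{B}_r$ solving the equation for all $|t|\ll1$.

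For holomorphicity in $t$, I would run the successive approximations $\epsilon^{(0)}(t):=\epsilon_1(t)$ and $\epsilon^{(m+1)}(t):=\mathcal{F}_t(\epsilon^{(m)}(t))$. Each iterate is holomorphic in $t$ as a $B$-valued map, since $\epsilon_1(t)$ is linear in $t$ and the operators $d_{T(E)}^*$, $G$ and $[\cdot,\cdot]_{T(E)}$ are independent of $t$ (the bracket being bilinear, so composition of holomorphic maps stays holomorphic). The contraction estimate shows $\epsilon^{(m)}(t)\to\epsilon(t)$ uniformly on a polydisc $|t|\ll1$, and a uniform limit of Banach-space-valued holomorphic maps is holomorphic (for instance by Morera's theorem applied on complex lines, or via the vector-valued Cauchy integral formula). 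Hence $\epsilon(t)$ is holomorphic in $t$.

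The main obstacle is precisely the derivative-loss bookkeeping in the first step: one must verify that the two derivatives gained by $G$ exactly compensate the one lost by the bracket and the one lost by $d_{T(E)}^*$, so that $\mathcal{F}_t$ genuinely preserves $B$ rather than mapping into a less regular space. Once the function spaces are aligned so that the three a priori estimates compose cleanly into the quadratic bound above, the self-map property, the contraction argument, and the holomorphic dependence on $t$ are all routine consequences.
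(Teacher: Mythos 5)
Your argument is correct and is essentially the route the paper takes: the paper simply lists the three a priori estimates and then invokes the proof of \cite[Chapter 4, Proposition 2.4]{Morrow-Kodaira_book} or the Banach-space implicit function theorem as in \cite{Ku}, and your contraction-mapping argument (with the derivative bookkeeping showing $d_{T(E)}^*G[\cdot,\cdot]_{T(E)}$ preserves $C^{k+1,\alpha}$, plus holomorphicity via uniform limits of the holomorphic iterates) is precisely the standard implementation of that citation. The only item the paper adds beyond the stated proposition is a subsequent elliptic-regularity argument for smoothness of $\epsilon(t)$, which the proposition itself does not claim and which you were therefore not obliged to supply.
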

Following Kuranishi \cite{Ku} we have,
\begin{proposition}
If we take $|t|$ small enough, the solution $\epsilon(t)$ that satisfies
 \begin{center}
 $\epsilon(t)=\epsilon_{1}(t)+\dfrac{1}{2}d_{T(E)}^{*}G[\epsilon(t),\epsilon(t)]_{T(E)}$
\end{center}
solves the Maurer-Cartan equation if and only if $H[\epsilon(t),\epsilon(t)]=0$. Here $H$ is the harmonic projection.
\end{proposition}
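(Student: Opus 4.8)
The plan is to run the classical Kuranishi argument, keeping track of the \emph{Maurer--Cartan defect}
\[
F := d_{T(E)}\epsilon(t)-\tfrac12[\epsilon(t),\epsilon(t)]_{T(E)}\in L^2,
\]
and showing that it is governed entirely by the harmonic part $H[\epsilon(t),\epsilon(t)]_{T(E)}$. First I would apply $d_{T(E)}$ to the defining equation $\epsilon(t)=\epsilon_1(t)+\tfrac12 d_{T(E)}^*G[\epsilon(t),\epsilon(t)]_{T(E)}$. Since $\epsilon_1(t)\in\mathbb{H}^1$ is harmonic we have $d_{T(E)}\epsilon_1(t)=0$, so $d_{T(E)}\epsilon(t)=\tfrac12 d_{T(E)}d_{T(E)}^*G[\epsilon(t),\epsilon(t)]_{T(E)}$. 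Substituting the Hodge decomposition $[\epsilon,\epsilon]_{T(E)}=H[\epsilon,\epsilon]_{T(E)}+(d_{T(E)}d_{T(E)}^*+d_{T(E)}^*d_{T(E)})G[\epsilon,\epsilon]_{T(E)}$ into $F$ and cancelling the common $d_{T(E)}d_{T(E)}^*G$ term yields the basic identity
\[
F=-\tfrac12 H[\epsilon,\epsilon]_{T(E)}-\tfrac12\, d_{T(E)}^*d_{T(E)}G[\epsilon,\epsilon]_{T(E)}.
\]
Here the first summand lies in $\mathbb{H}^2$ while the second lies in $\operatorname{Im}d_{T(E)}^*$, so the two are orthogonal. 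This already settles the ``only if'' direction: if $F=0$, then both orthogonal summands must vanish, so in particular $H[\epsilon,\epsilon]_{T(E)}=0$.

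For the ``if'' direction I would assume $H[\epsilon,\epsilon]_{T(E)}=0$, so that $F=-\tfrac12 d_{T(E)}^*d_{T(E)}G[\epsilon,\epsilon]_{T(E)}$ is $d_{T(E)}^*$-exact; in particular $HF=0$ and $d_{T(E)}^*F=0$. Next I would use the DGLA structure from Theorem \ref{DGLA} to show $d_{T(E)}F=-[F,\epsilon]_{T(E)}$: from $d_{T(E)}^2=0$ we get $d_{T(E)}F=-\tfrac12 d_{T(E)}[\epsilon,\epsilon]_{T(E)}$, and the graded Leibniz rule together with graded antisymmetry gives $d_{T(E)}[\epsilon,\epsilon]_{T(E)}=2[d_{T(E)}\epsilon,\epsilon]_{T(E)}=2[F+\tfrac12[\epsilon,\epsilon]_{T(E)},\epsilon]_{T(E)}$, while the graded Jacobi identity forces $[[\epsilon,\epsilon]_{T(E)},\epsilon]_{T(E)}=0$ for the degree-one element $\epsilon$. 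Since $F$ is both $d_{T(E)}^*$-closed and $d_{T(E)}^*$-exact, its Hodge decomposition collapses to $F=d_{T(E)}^*d_{T(E)}GF=d_{T(E)}^*G\,d_{T(E)}F=-d_{T(E)}^*G[F,\epsilon]_{T(E)}$, using that $G$ commutes with $d_{T(E)}$.

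The final step, which I expect to be the main obstacle, is the smallness estimate that closes the loop. Chaining the a priori bounds recalled before the statement, namely $\|d_{T(E)}^*\eta\|_{k,\alpha}\le C_1\|\eta\|_{k+1,\alpha}$, the Green operator estimate $\|G\eta\|_{k,\alpha}\le C_4\|\eta\|_{k-2,\alpha}$, and the bracket estimate $\|[\eta,\delta]\|_{k,\alpha}\le C_2\|\eta\|_{k+1,\alpha}\|\delta\|_{k+1,\alpha}$, applied to $F=-d_{T(E)}^*G[F,\epsilon]_{T(E)}$, gives
\[
\|F\|_{k,\alpha}\le C_1C_2C_4\,\|\epsilon(t)\|_{k,\alpha}\,\|F\|_{k,\alpha}.
\]
Because $\epsilon(0)=0$ and $\epsilon(t)$ depends continuously (indeed holomorphically) on $t$, for $|t|$ small enough we have $C_1C_2C_4\|\epsilon(t)\|_{k,\alpha}<1$, whence $\|F\|_{k,\alpha}(1-C_1C_2C_4\|\epsilon(t)\|_{k,\alpha})\le 0$ forces $F=0$; thus $\epsilon(t)$ solves the Maurer--Cartan equation. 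I expect the delicate bookkeeping to be tracking the index shifts in the H\"older norms so that the three estimates chain correctly, and verifying that the harmonic / co-exact orthogonal splitting and the Hodge collapse are legitimate on the completion $\widetilde L^i$, to which $H$ and $G$ extend as bounded operators, rather than only on smooth forms.
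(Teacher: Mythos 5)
Your proposal is correct and follows essentially the same route as the paper: the ``only if'' direction comes from applying the harmonic projection (your orthogonal-splitting phrasing of the identity $F=-\tfrac12 H[\epsilon,\epsilon]_{T(E)}-\tfrac12 d_{T(E)}^*d_{T(E)}G[\epsilon,\epsilon]_{T(E)}$ is just a mild repackaging of this), and the ``if'' direction reduces, exactly as in the paper, to the fixed-point identity $F=-d_{T(E)}^*G[F,\epsilon]_{T(E)}$ via the Hodge decomposition, the commutation of $G$ with $d_{T(E)}$, and the Jacobi identity $[[\epsilon,\epsilon]_{T(E)},\epsilon]_{T(E)}=0$, after which the bracket and Green's operator estimates force $F=0$ for $|t|$ small.
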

\begin{proof}
Suppose the Maurer-Cartan equation holds. Then,
\begin{equation*}
d_{T(E)}\epsilon(t)-\dfrac{1}{2}[\epsilon(t),\epsilon(t)]_{T(E)}=0.
\end{equation*}
Hence we have,
\begin{center}
 $H[\epsilon(t),\epsilon(t)]_{T(E)}=2Hd_{T(E)}\epsilon(t)=0.$
\end{center}
Conversely, suppose that  $H[\epsilon(t),\epsilon(t)]_{T(E)}=0$. We have to show
\begin{center}
$\delta(t):=d_{T(E)}\epsilon(t)-\dfrac{1}{2}[\epsilon(t),\epsilon(t)]_{T(E)}=0.$
\end{center}
Since $\epsilon(t)$ is a solution to 
 \begin{center}
$\epsilon(t)=\epsilon_{1}(t)+\dfrac{1}{2}d_{T(E)}^{*}G[\epsilon(t),\epsilon(t)]_{T(E)}$
\end{center}
and $\epsilon_1(t)$ is $d_{T(E)}$-closed, applying $d_{T(E)}$ we get
\begin{center}
$d_{T(E)}\epsilon(t)=\dfrac{1}{2}d_{T(E)}d^{*}_{T(E)}G[\epsilon(t),\epsilon(t)]_{T(E)}.$
\end{center}
Hence
\begin{center}
$2\delta(t)=d_{T(E)}d_{T(E)}^{*}G[\epsilon(t),\epsilon(t)]_{T(E)}-[\epsilon(t),\epsilon(t)]_{T(E)}.$
\end{center}
By the Hodge decomposition, we can write
\begin{center}
$[\epsilon(t),\epsilon(t)]_{T(E)}=H[\epsilon(t),\epsilon(t)]_{T(E)}+\Delta_{T(E)}G[\epsilon(t),\epsilon(t)]_{T(E)}=\Delta_{T(E)}G[\epsilon(t),\epsilon(t)]_{T(E)}.$
\end{center}
Therefore,
\begin{equation*}
\begin{split}\label{estimate 1}
2\delta(t)&=d_{T(E)}d_{T(E)}^{*}G[\epsilon(t),\epsilon(t)]_{T(E)}-\Delta_{T(E)}G[\epsilon(t),\epsilon(t)]_{T(E)}\\
&=-d_{T(E)}^{*}d_{T(E)}G[\epsilon(t),\epsilon(t)]_{T(E)}\\
&=-d_{T(E)}^{*}Gd_{T(E)}[\epsilon(t),\epsilon(t)]_{T(E)}\\
&=-2d_{T(E)}^{*}G[d_{T(E)}\epsilon(t),\epsilon(t)]_{T(E)}.
\end{split}
\end{equation*}
Hence we get,
\begin{center}
$\delta(t)=-d_{T(E)}^{*}G[d_{T(E)}\epsilon(t),\epsilon(t)]_{T(E)}=-d_{T(E)}^{*}G[\delta(t)+\dfrac{1}{2}[\epsilon(t),\epsilon(t)]_{T(E)},\epsilon(t)]_{T(E)}=-d_{T(E)}^{*}G[\delta(t),\epsilon(t)]_{T(E)}.$
\end{center}
We used the Jacobi identity in the last equality. Using the estimate
\begin{center}
$\norm{[\xi,\eta]}_{k,\alpha}\leq C_{k,\alpha}\norm{\xi}_{k+1,\alpha}\norm{\eta}_{k+1,\alpha},$
\end{center}
we get,
\begin{center}
$\norm{\delta(t)}_{k,\alpha}\leq C_{k,\alpha}\norm{\delta(t)}_{k,\alpha}\norm{\epsilon(t)}_{k,\alpha}.$
\end{center}
If we take $|t|$ small enough such that $C_{k,\alpha}\norm{\epsilon(t)}_{k,\alpha}<1$, we obtain $\delta(t)=0$. This stands for all $|t|$ small enough.
This finishes the proof.
\end{proof}
In the case when $H[\epsilon(t),\epsilon(t)]_{T(E)}=0$ for all $t$ or $\mathbb{H}^2=0$, we have,
\begin{corollary}
Let $n$ be the dimension of $\mathbb{H}^1$.
If $H[\epsilon(t),\epsilon(t)]_{T(E)}=0$ for all $t$, we have a family of deformation of holomorphic-Higgs pair over a small ball $\Delta$ centered at the origin of $\mathbb{C}^{n}$. \end{corollary}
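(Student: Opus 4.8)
The plan is to read the hypothesis through the two propositions just established together with Theorem \ref{DGLA for Higgs bundle}, and then to assemble the resulting fiberwise data into a genuine holomorphic family in the sense of Definition 2.2. First I would observe that, since $H[\epsilon(t),\epsilon(t)]_{T(E)}=0$ for every $t$, the preceding proposition shows that the solution $\epsilon(t)$ of the fixed-point equation satisfies the Maurer-Cartan equation for all $|t|\ll 1$. Decomposing $\epsilon(t)=(A_t+B_t,\phi_t)$ with $A_t\in A^{0,1}(\mathrm{End}(E))$, $B_t\in A^{1,0}(\mathrm{End}(E))$ and $\phi_t\in A^{0,1}(TX)$, Theorem \ref{DGLA for Higgs bundle} then produces, for each such $t$, a holomorphic-Higgs pair $(X_{\epsilon(t)},E_{\epsilon(t)},\theta_{\epsilon(t)})$, where $\theta_{\epsilon(t)}=\theta+B_t+\phi_t\lrcorner(\theta+B_t)$ as in Section 2.

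Next I would upgrade this pointwise-in-$t$ statement to a family over $\Delta\subset\mathbb{C}^n$, the essential extra input being that $\epsilon(t)$, and hence $\phi_t$, $A_t$, $B_t$, depends holomorphically on $t$ by the first proposition of this subsection. Following Kodaira-Spencer, I would endow the product $X\times\Delta$ with the almost complex structure whose fiberwise antiholomorphic directions are deformed by $\phi_t$ and whose base directions carry the standard structure of $\Delta$; I call the resulting space $\mathcal{X}$ and let $\pi\colon\mathcal{X}\to\Delta$ be the projection. In the same way I would use the operators $\overline\partial_{E_t}$ and the Higgs fields $\theta_t$ constructed in Section 2 to define a bundle $\mathcal{E}$ with a global field $\Theta$ on $\mathcal{X}$.

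The heart of the proof, and the step I expect to be the main obstacle, is to check that these structures are holomorphic on the total space rather than merely on each fiber. Integrability of the total almost complex structure on $\mathcal{X}$ follows from the fact that every $\phi_t$ is a Maurer-Cartan element together with the holomorphicity $\partial\phi_t/\partial\overline t_\alpha=0$: the mixed components of the would-be Dolbeault operator in the base directions vanish exactly because of this holomorphic dependence, which is precisely the situation of the Newlander-Nirenberg argument of \cite{Kod}, Chapter 4. The same holomorphic variation in $t$ lets the fiberwise operators $\overline\partial_{E_t}$ patch into an integrable operator on $\mathcal{E}$ over $\mathcal{X}$ and makes $\Theta$ a global Higgs field, with Proposition \ref{N-N} supplying the fiberwise integrability that is then propagated in $t$. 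This gluing across the base, rather than any single-fiber computation, is where the real work lies.

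Finally I would verify the conditions of Definition 2.2. Properness of $\pi$ is immediate because $X$ is compact, so all fibers are compact, and $\pi$ is a holomorphic submersion by construction. Since $\epsilon_1(0)=0$, the fixed-point equation forces $\epsilon(0)=0$, whence $(A_0,B_0,\phi_0)=0$ and the central fiber recovers the original data: $\pi^{-1}(0)=X$, $\mathcal{E}|_{\pi^{-1}(0)}=E$ and $\Theta|_{\pi^{-1}(0)}=\theta$. As $\dim_{\mathbb{C}}\mathbb{H}^1=n$, the parameter space is a ball in $\mathbb{C}^n$, and $(\mathcal{X},\pi,\mathcal{E},\Theta)$ is the desired family of deformations of the holomorphic-Higgs pair.
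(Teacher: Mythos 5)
Your proposal is correct and follows essentially the same route as the paper: use the preceding propositions to get fiberwise Maurer--Cartan solutions, then exploit holomorphic dependence on $t$ together with the Newlander--Nirenberg theorem (and its linearized version for the bundle) to assemble a complex structure on $X\times\Delta$, a holomorphic structure on $E\times\Delta$, and a global Higgs field $\Theta$, finally checking the central-fiber conditions. The only stylistic difference is that the paper makes the bundle gluing explicit via a local frame $\{e(x,t)\}\subset\mathrm{ker}(\overline D_t')$ holomorphic in $t$, where you describe the same step more abstractly as patching the operators $\overline\partial_{E_t}$.
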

\begin{proof}
If $H[\epsilon(t),\epsilon(t)]=0$ for all $t$, $\epsilon(t)=(A_t + B_t,\phi_t)$ satisfies the Maurer-Cartan equation and so we obtain holomorphic-Higgs family $(X_t, E_t,\theta_t)$. Since $\phi_t$ is holomorphic for variable $t$, applying the Newlander-Nirenberg theorem, we can define a complex structure on $\mathcal{X}:=X\times\Delta$ such that $X_t=\mathcal{X}|_{X\times\{t\}}$. Let $\mathcal{E}:=E\times\Delta$. By applying the linearized Newlander-Nirenberg theorem as in \cite{Mo}, we have a local frame $\{e(x,t)\}$ of $\mathcal{E}$ on $\mathcal{X}$ such that for each $t$, $\{e(x,t)\}\subset \mathrm{ker}(\overline D_{t}')=\mathrm{ker}(\overline\partial_{E_{t}})$ and is holomorphic respect to variable $t$. Let $\sigma:\mathcal{X}\to\mathcal{E}$ be a smooth section and locally trivialized as 
$\sigma(x,t)=\sum_k s^k(x,t)e_k(x,t)$
where  $s^k$ are smooth function on $\mathcal{X}$. We define $\overline\partial_{\mathcal{E}}:A(\mathcal{E})\to A^{0,1}_{\mathcal{X}}(\mathcal{E})$ as follows:
\begin{center}
$\overline\partial_{\mathcal{E}}(\sigma(x,t)):=\sum_k \overline\partial_{\mathcal{X}}s^k(x,t)\otimes e_k(x,t).$
\end{center}
Note that $\overline\partial_{\mathcal{E}}$ is well defined and $\overline\partial_{\mathcal{E}}|_{E\times{t}}=\overline\partial_{E_{t}}$. It is clear that $\overline\partial_{\mathcal{E}}^2=0$ so that $\mathcal{E}$ is a holomorphic bundle over $\mathcal{X}$.\par
Let $\Theta=\theta + B_t +\phi_t\lrcorner(\theta + B_t)$. Since $\phi_t,B_t$ is holomorphic respect to the variable $t$ and $\theta + B_t +\phi_t\lrcorner(\theta + B_t)$ is a Higgs field for $(X_t,E_t)$, we have $\overline\partial_{\mathrm{End}\mathcal{E}}\Theta=0$, $\Theta\wedge\Theta=0$. Hence $\Theta$ is a Higgs field for $(\mathcal{X},\mathcal{E}).$
\par
Let $\pi:\mathcal{X}=X\times\Delta\to\Delta$ be a natural projection, this is a holomorphic submersion. Also $\pi^{-1}(0)=X$, $\mathcal{E}|_{\pi^{-1}(0)}=E$ and $\Theta|_{\pi^{-1}(0)}=\theta$ stands. Hence we have a family of deformation of holomorphic-Higgs pair over $\Delta$.
\end{proof}
In general, the condition $\mathbb{H}^2=0$ may not be satisfied. However, we can define a possible singular analytic space 
\begin{center}
$\mathcal{S}:=\{t\in\Delta : H[\epsilon(t),\epsilon(t)]_{T(E)}=0\}.$
\end{center}
Let $X_{\epsilon(t)}, E_{\epsilon(t)}, \theta_{\epsilon(t)}$ be the complex manifold, the holomorphic bundle, and the Higgs field which ${\epsilon(t)}$ defines. By the above results, we have a family of holomorphic-Higgs pair $\{(X_{\epsilon(t)}, E_{\epsilon(t)},\theta_{\epsilon(t)})\}_{t\in\mathcal{S}}$. We call this family the \textit {Kuranishi family} of $(X, E,\theta)$ and $\mathcal{S}$ the \textit{Kuranishi space}. 

\subsection{Local completeness of Kuranishi family}
This section aims to give proof of the local completeness of the Kuranishi family for the deformation of the pair $(X, E,\theta)$. Here we follow Kuranishi's method.\par
Recall that in section \ref{Kuranishi family} we proved that for a given $\epsilon_1(t)=\sum_i t_i\eta_i\in\mathbb{H}^1=\mathrm{ker}(\Delta_{T(E)}:L^1\to L^1)$ the existence of solutions $\epsilon(t)$ to 
\begin{center}
$\epsilon(t)=\epsilon_{1}(t)+\dfrac{1}{2}d_{T(E)}^{*}G[\epsilon(t),\epsilon(t)]_{T(E)}$
\end{center}
and proved that $\epsilon(t)$ satisfies the Maurer-Cartan equation if and only if $H[\epsilon(t),\epsilon(t)]_{T(E)}=0$. Hence we obtain a family of holomorphic-Higgs pairs over 
\begin{center}
$\mathcal{S}:=\{t\in\Delta : H[\epsilon(t),\epsilon(t)]_{T(E)}=0\}$.
\end{center}
Before we state the main theorem of this paper, we introduce the Sobolev norm for $L$ and collect some estimates.\par
First, let us recall the Sobolev norm on Euclid space. Let $U$ be an open subset of $\mathbb{R}^n$ and $f$ and $g$ be a complex-valued smooth function on $\overline U$. Here, $\overline U$ is a closure of $U$. We set,
\begin{equation*}
(f,g)_k := \sum_{|\alpha|<k}\int_{U}^{}D^{\alpha}f\cdot\overline {D^{\alpha}g} dx,
\end{equation*}
where we use the multi-index notation $\alpha=(\alpha_1,\dots,\alpha_n)$, $\alpha_i>0$, $|\alpha|=\Sigma_{i}\alpha_i$ and $D^{\alpha}=\bigg(\dfrac{\partial}{\partial x_1}\bigg)^{\alpha_1}\cdots\bigg(\dfrac{\partial}{\partial x_n}\bigg)^{\alpha_n}$.\par
Then we define $k$-th Sobolev norm $|\cdot|_k$ as,
\begin{equation}
|f|_k=|f|_k^{U}:=\sqrt{(f,f)_k}.
\end{equation}
Let $V$ be a relatively compact open subset of $U$. By \cite[Chapter 4, Lemma 3.1]{Morrow-Kodaira_book}, we have an estimate such that 
\begin{equation}
|fg|^{V}_k\leq c|f|_k^{U} \cdot |g|_k^{U}, (k\geq n+2),
\end{equation}
where $c$ is a constant.\par
By using a partition of unity and the metric of $E$ and $X$, we can define $k$-th Sobolev $|\eta|_k$ for any $\eta\in L^i=\oplus_{p+q=i}A^{p,q}(\mathrm{EndE}(E))\oplus A^{0,i}(TX).$ We list some estimates which we need.  Let $c_k$ be a constant, then the following estimates hold (See \cite{Morrow-Kodaira_book} for more details),
\begin{equation}
\begin{split}\label{estimate 4}
|[\phi,\psi]_k|\leq c_k|\phi|_{k+1}|\psi|_{k+1}&(k\geq 2n+2, dim_{\mathbb{C}}X=n),\\
|H\phi|&\leq c_k|\phi|_k,\\
|d_{T(E)}^* G\phi|_k&\leq c_k|\phi|_{k-1}.
\end{split}
\end{equation}\par
From now on, we choose a $k$ enough large such that the above estimates hold.\par
Let $\eta:=(A+B,\phi)\in L^1$ be a Maurer-Cartan element and assume $|\eta|_k$ is small enough so that $\eta$ can define a holomorphic-Higgs pair. Let $X_\eta, E_\eta, \theta_\eta$ be the complex manifold, the holomorphic bundle, and the Higgs field which $\eta$ defines. We denote this holomorphic-Higgs pair $(X_\eta, E_\eta,\theta_\eta)$.  Let $\eta'\in L^1$ be another Maurer-Cartan element and assume that $\eta'$ also defines a holomorphic-Higgs pair $(X_{\eta'}, E_{\eta'},\theta_{\eta'})$. We denote as $(X_\eta, E_\eta, \theta_\eta)\cong(X_{\eta'},E_{\eta'},\theta_{\eta'})$  when there is a biholomorphic map $F:X_\eta\to X_\eta'$, a holomorphic bundle isomorphism $\Phi:E_\eta\to E_\eta'$ which is compatible with $F$ and $\theta_\eta=\widehat{\Phi}^{-1}\circ F^*(\theta_\eta')\circ \widehat{\Phi}$ holds. Here $\widehat{\Phi}:E_\eta\to F^{*}(E_{\eta'})$ is the holomorphic bundle isomorphism induced by $\Phi$. $F^{*}(E_{\eta'})$ is the pull back of the bundle $E_{\eta'}$ by $F$. \par
Now we state the main theorem of this paper.
\begin{theorem}\label{main}
Let $\eta:=(A+B,\phi)\in L^1$ be a Maurer-Cartan element. If $|\eta|_k$ is small enough, then there exists some $t\in \mathcal{S}$ such that  $(X_\eta,E_\eta,\theta_\eta)\cong(X_{\epsilon(t)},E_{\epsilon(t)},\theta_{\epsilon(t)})$.
\end{theorem}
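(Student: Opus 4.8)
The plan is to realize the required isomorphism through the gauge action of $L^{0}$ on Maurer--Cartan elements and to reduce the completeness statement to a single gauge-fixing step combined with the uniqueness of the Kuranishi solution from Section~\ref{Kuranishi family}. Note that $L^{0}=A^{0,0}(\mathrm{End}(E))\oplus A^{0,0}(TX)$: the summand $A^{0,0}(TX)$ consists of smooth $(1,0)$-vector fields, which integrate to diffeomorphisms of $X$, and $A^{0,0}(\mathrm{End}(E))$ consists of smooth endomorphisms, which integrate to bundle automorphisms of $E$. For $a\in L^{0}$ the standard exponential gauge action on a Maurer--Cartan element $\eta\in L^{1}$ integrates the infinitesimal action $\eta\mapsto d_{T(E)}a-[\eta,a]_{T(E)}$ (the signs are dictated by the convention $d_{T(E)}\eta-\tfrac12[\eta,\eta]_{T(E)}=0$, so that Maurer--Cartan elements are carried to Maurer--Cartan elements). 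First I would make precise that this action corresponds to the concrete isomorphisms defined in this section: the vector-field component of $a$ integrates to a biholomorphism $F$, the endomorphism component to a compatible bundle isomorphism $\Phi$, and one checks that these assemble so that $a*\eta$ defines a holomorphic-Higgs pair isomorphic to $(X_{\eta},E_{\eta},\theta_{\eta})$, with the intertwining $\theta_{\eta}=\widehat{\Phi}^{-1}\circ F^{*}(\theta_{a*\eta})\circ\widehat{\Phi}$.

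The central step is the \emph{gauge-fixing}: I claim that if $|\eta|_{k}$ is small enough, there is a small $a\in L^{0}$ with $a\perp\mathbb{H}^{0}$ such that $\tilde\eta:=a*\eta$ satisfies the slice condition $d_{T(E)}^{*}\tilde\eta=0$. To prove this I would consider the map $P(a,\eta):=d_{T(E)}^{*}(a*\eta)$ on the relevant Sobolev completions of $(\mathbb{H}^{0})^{\perp}\cap L^{0}$ and $L^{1}$. Since $d_{T(E)}^{*}$ vanishes on $L^{0}$, the linearization of $P$ in $a$ at $(0,0)$ is $a\mapsto d_{T(E)}^{*}d_{T(E)}a=\Delta_{T(E)}a$, which by Hodge theory is an isomorphism from $(\mathbb{H}^{0})^{\perp}\cap L^{0}$ onto itself. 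The implicit function theorem in Banach spaces---run exactly as in the construction of $\epsilon(t)$ in Section~\ref{Kuranishi family}---then produces a unique small solution $a=a(\eta)$ depending smoothly on $\eta$ with $a(0)=0$; here the estimates \eqref{estimate 4} for $d_{T(E)}^{*}G$ and for $[\cdot,\cdot]_{T(E)}$ are what control the nonlinear tail of the exponential series and drive the contraction.

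Once $\tilde\eta$ is gauge-fixed the identification with a Kuranishi element is formal. Because the gauge action preserves the Maurer--Cartan equation, $\tilde\eta$ is again Maurer--Cartan, so $d_{T(E)}\tilde\eta=\tfrac12[\tilde\eta,\tilde\eta]_{T(E)}$. Applying $d_{T(E)}^{*}G$, and using $Gd_{T(E)}=d_{T(E)}G$ together with the operator identity $d_{T(E)}^{*}d_{T(E)}G=I-H-d_{T(E)}d_{T(E)}^{*}G$ and the slice condition $d_{T(E)}^{*}\tilde\eta=0$, one obtains
\[
\tilde\eta=H\tilde\eta+\tfrac12\,d_{T(E)}^{*}G\,[\tilde\eta,\tilde\eta]_{T(E)}.
\]
Writing $H\tilde\eta=\sum_{i}t_{i}\eta_{i}=\epsilon_{1}(t)$ defines $t$, and the displayed equation is exactly the Kuranishi equation of Section~\ref{Kuranishi family}; by uniqueness of its small solution, $\tilde\eta=\epsilon(t)$. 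Finally, since $\tilde\eta$ is Maurer--Cartan we get $H[\epsilon(t),\epsilon(t)]_{T(E)}=2Hd_{T(E)}\epsilon(t)=0$, so $t\in\mathcal{S}$; and the gauge equivalence $\eta\sim\tilde\eta=\epsilon(t)$ yields the desired isomorphism $(X_{\eta},E_{\eta},\theta_{\eta})\cong(X_{\epsilon(t)},E_{\epsilon(t)},\theta_{\epsilon(t)})$.

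The hard part will be the gauge-fixing step. Beyond the formal linearization, I must verify that the full nonlinear gauge action $a*\eta$ is well defined and smooth on the Sobolev completions---the exponential series must converge and its tail must be estimated uniformly---and that the possibly nonzero space $\mathbb{H}^{0}$ of infinitesimal automorphisms of $(X,E,\theta)$ is correctly absorbed by restricting $a$ to $(\mathbb{H}^{0})^{\perp}$, so that the linearized operator is genuinely invertible. A secondary point requiring care is the passage from the purely algebraic gauge equivalence of Maurer--Cartan elements to a genuine isomorphism of holomorphic-Higgs pairs in the concrete sense defined above, namely the integration of the degree-zero data to a pair $(F,\Phi)$ that intertwines the Higgs fields.
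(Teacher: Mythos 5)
Your outline follows the paper's proof almost step for step: reduce to the slice $d_{T(E)}^{*}\tilde\eta=0$ by acting with a degree-zero element orthogonal to $\mathbb{H}^{0}$, produce that element by the implicit function theorem, and then identify the gauge-fixed element with $\epsilon(t)$ via the uniqueness of small solutions of the Kuranishi equation $\epsilon=\epsilon_{1}(t)+\tfrac12 d_{T(E)}^{*}G[\epsilon,\epsilon]_{T(E)}$ (this last step is exactly the paper's Propositions 4.3--4.4, including the observation that $H[\epsilon(t),\epsilon(t)]_{T(E)}=2Hd_{T(E)}\epsilon(t)=0$ forces $t\in\mathcal{S}$). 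So the architecture is right.

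The gap is precisely at what you yourself flag as ``the hard part,'' and it is not a minor technicality: it is the bulk of the paper's Section 4.2. The ``standard exponential gauge action'' $a\ast\eta$ obtained by summing the adjoint series does not literally make sense here, because for $a=(\upsilon,\xi)\in L^{0}$ the operator $[\,\cdot\,,a]_{T(E)}$ contains the first-order differential operator $\{\partial_{K}^{\mathrm{End}(E)},\xi\lrcorner\}$; exponentiating the vector-field component is not a convergent series of bounded operators on any Sobolev completion but requires actually integrating $\xi$ to a diffeomorphism. The paper does this concretely: $f_{\xi}$ is defined by the geodesic flow of a fixed Hermitian metric, the bundle map is $\Phi=P_{\xi}\circ\exp(\upsilon)$ with $P_{\xi}$ the Chern-connection parallel transport along $f_{\xi}$, and then Proposition \ref{small deform} establishes, by an explicit local computation frame by frame (separately for the $A$-component and the $B$-component, using $P'_{\xi}(e'_k)=-\xi\lrcorner K^{-1}\partial K(e'_k)$ and the Bianchi-type identities), that
\begin{equation*}
((A+B)\star\Phi,\phi\circ f_{\xi})=(A+B,\phi)+d_{T(E)}(\upsilon,\xi)+R\bigl((A,B,\phi),(\upsilon,\xi)\bigr)
\end{equation*}
with $R$ quadratic. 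Without this expansion you have neither the linearization needed for your implicit function theorem nor the verification that the transformed element is the Maurer--Cartan element of an isomorphic holomorphic-Higgs pair. A second, smaller divergence: the paper does not linearize $a\mapsto d_{T(E)}^{*}(a\ast\eta)$ to $\Delta_{T(E)}$ directly; it precomposes with $G$ and uses the identity $\gamma=Gd_{T(E)}^{*}d_{T(E)}\gamma$ on $F^{0}=(\mathbb{H}^{0})^{\perp}$, so the equation becomes $\gamma+Gd_{T(E)}^{*}\eta+Gd_{T(E)}^{*}R(\eta,\gamma)=0$ with derivative the identity at the origin, which avoids the loss of two derivatives in your formulation. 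These are fixable points, but as written the proposal defers the central lemma rather than proving it.
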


\begin{proposition}\label{small sol is unique}
Let $\epsilon_1(t)\in\mathbb{H}^1$, $t\in S$. Assume that $\epsilon$  solves the equation, 
\begin{center}
$\epsilon=\epsilon_{1}(t)+\dfrac{1}{2}d_{T(E)}^{*}G[\epsilon,\epsilon]_{T(E)}.$
\end{center}
If $|\epsilon|_k$ is small enough, then the solution is unique.
\end{proposition}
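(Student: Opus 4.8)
The plan is to run the standard contraction/uniqueness argument for the fixed-point equation, exploiting the purely quadratic nonlinearity and the Sobolev estimates collected just above the theorem. Suppose $\epsilon$ and $\epsilon'$ both solve the equation with the same harmonic part $\epsilon_1(t)$. Subtracting the two identities, the linear terms $\epsilon_1(t)$ cancel and I obtain
\begin{equation*}
\epsilon-\epsilon'=\tfrac{1}{2}d_{T(E)}^{*}G\big([\epsilon,\epsilon]_{T(E)}-[\epsilon',\epsilon']_{T(E)}\big).
\end{equation*}

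First I would rewrite the difference of the two brackets in a manifestly linear-in-$(\epsilon-\epsilon')$ form. Since $\epsilon,\epsilon'\in L^1$ and the bracket is graded-symmetric on $L^1$ (property $\mathit{1}$ of the DGLA gives $[a,b]_{T(E)}=[b,a]_{T(E)}$ for $a,b\in L^1$), bilinearity yields $[\epsilon,\epsilon]_{T(E)}-[\epsilon',\epsilon']_{T(E)}=[\epsilon+\epsilon',\epsilon-\epsilon']_{T(E)}$, so that
\begin{equation*}
\epsilon-\epsilon'=\tfrac{1}{2}d_{T(E)}^{*}G[\epsilon+\epsilon',\epsilon-\epsilon']_{T(E)}.
\end{equation*}
Next I would apply the estimates \eqref{estimate 4}: chaining the bound $|d_{T(E)}^{*}G\phi|_k\le c_k|\phi|_{k-1}$ with the bracket estimate $|[\phi,\psi]|_{k-1}\le c_{k-1}|\phi|_k|\psi|_k$ produces a constant $C$ for which
\begin{equation*}
|\epsilon-\epsilon'|_k\le \tfrac{C}{2}\,|\epsilon+\epsilon'|_k\,|\epsilon-\epsilon'|_k\le \tfrac{C}{2}\big(|\epsilon|_k+|\epsilon'|_k\big)\,|\epsilon-\epsilon'|_k.
\end{equation*}
If $|\epsilon|_k$ and $|\epsilon'|_k$ are small enough that $\tfrac{C}{2}\big(|\epsilon|_k+|\epsilon'|_k\big)<1$, this inequality forces $|\epsilon-\epsilon'|_k=0$, hence $\epsilon=\epsilon'$.

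To phrase the conclusion in terms of the single smallness hypothesis as stated, I would first observe that any solution obeys $|\epsilon|_k\le|\epsilon_1(t)|_k+\tfrac{C'}{2}|\epsilon|_k^2$ by the same two estimates applied to the defining equation; hence once $|\epsilon_1(t)|_k$ is small (equivalently $|\epsilon|_k$ is small) every competing solution automatically lies in the small-norm regime where the contraction bound applies. The main point requiring care is not a genuine obstacle but bookkeeping: one must track the Sobolev indices so that the derivative loss in the $d_{T(E)}^{*}G$-estimate and the derivative gain in the bracket estimate cancel and close the inequality at level $k$, which is precisely why $k$ is taken large, and one must verify the graded-symmetry identity that licenses the factorization $[\epsilon,\epsilon]_{T(E)}-[\epsilon',\epsilon']_{T(E)}=[\epsilon+\epsilon',\epsilon-\epsilon']_{T(E)}$.
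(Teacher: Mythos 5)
Your proposal is correct and follows essentially the same route as the paper: the paper sets $\delta=\epsilon-\epsilon(t)$ and expands $[\epsilon,\epsilon]_{T(E)}=[\delta+\epsilon(t),\delta+\epsilon(t)]_{T(E)}$ to get $\delta=\tfrac12 d^*_{T(E)}G(2[\delta,\epsilon(t)]_{T(E)}+[\delta,\delta]_{T(E)})$, which is algebraically identical to your factorization $[\epsilon+\epsilon',\epsilon-\epsilon']_{T(E)}$, and then closes with the same chained $d^*_{T(E)}G$/bracket Sobolev estimates to force $|\delta|_k=0$. Your extra remarks on the graded symmetry of the bracket in degree one and on why both solutions lie in the small-norm regime are sound and, if anything, slightly more careful than the paper's own write-up.
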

\begin{proof}
Suppose $\epsilon$ is another solution. 
Let $\delta=\epsilon-\epsilon(t).$ Then
\begin{equation*}
\begin{split}
\delta&=\dfrac{1}{2}d^{*}_{T(E)}G([\epsilon,\epsilon]_{T(E)}-[\epsilon(t),\epsilon(t)]_{T(E)})\\
&=\dfrac{1}{2}d^{*}_{T(E)}G([\delta,\epsilon(t)]_{T(E)}+[\epsilon(t),\delta]_{T(E)}+[\delta,\delta]_{T(E)})\\
&=\dfrac{1}{2}d^{*}_{T(E)}G(2[\delta,\epsilon(t)]_{T(E)}+[\delta,\delta]_{T(E)})
\end{split}
\end{equation*}
Estimating $|\delta|_k$ gives
\begin{equation*}
\begin{split}
|\delta|_k&\leq D_k(|\delta|_k|\epsilon(t)|_k+|\delta|^{2}_k)\\
&\leq D_k|\delta|_k(|\epsilon(t)|_k+|\delta|_k).
\end{split}
\end{equation*}
If $|\epsilon(t)|_k$ is small enough, the above estimate hold if and only if $|\delta|_k=0.$
This proves the proposition.
\end{proof}

\begin{proposition}
Suppose $\eta\in L^1$ satisfies the Maurer-Cartan equation (\ref{eq DGLA for Higgs bundle}).
If $d^{*}_{T(E)}\eta=0$ and $|\eta|_k$ is small enough, then $\eta=\epsilon(t)$ for some $t\in S$.
\end{proposition}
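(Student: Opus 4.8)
The plan is to show that the gauge-fixed Maurer--Cartan element $\eta$ automatically solves the Kuranishi integral equation for a suitable parameter $t$, and then to invoke the uniqueness of small solutions established in Proposition \ref{small sol is unique}. The argument is purely Hodge-theoretic once the two hypotheses are combined: the Maurer--Cartan equation $d_{T(E)}\eta=\frac{1}{2}[\eta,\eta]_{T(E)}$ and the gauge condition $d^{*}_{T(E)}\eta=0$.

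First I would fix the parameter. Since $H\eta\in\mathbb{H}^1$ and $\{\eta_1,\dots,\eta_n\}$ is a basis of $\mathbb{H}^1$, there are unique coefficients with $H\eta=\sum_i t_i\eta_i=\epsilon_1(t)$; set $t=(t_1,\dots,t_n)$. The estimate $|H\eta|_k\le c_k|\eta|_k$ guarantees that $|t|$ is as small as we wish once $|\eta|_k$ is small, so $t$ lies in the ball $\Delta$ on which $\epsilon(t)$ is defined. Next I would run the Hodge decomposition $\eta=H\eta+\Delta_{T(E)}G\eta$ and expand $\Delta_{T(E)}G\eta=d_{T(E)}d^{*}_{T(E)}G\eta+d^{*}_{T(E)}d_{T(E)}G\eta$. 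Because $G$ commutes with both $d_{T(E)}$ and $d^{*}_{T(E)}$, the gauge condition kills the first term, since $d^{*}_{T(E)}G\eta=Gd^{*}_{T(E)}\eta=0$, while the Maurer--Cartan equation turns the second into $d^{*}_{T(E)}Gd_{T(E)}\eta=\frac{1}{2}d^{*}_{T(E)}G[\eta,\eta]_{T(E)}$. This yields exactly
\begin{equation*}
\eta=\epsilon_1(t)+\frac{1}{2}d^{*}_{T(E)}G[\eta,\eta]_{T(E)},
\end{equation*}
so that $\eta$ is a solution of the Kuranishi equation with $\epsilon_1(t)=H\eta$.

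Finally, the canonical solution $\epsilon(t)$ constructed earlier solves the same integral equation for the same $\epsilon_1(t)$. Since $|\eta|_k$ is small and $|\epsilon(t)|_k$ is controlled by $|t|$, hence by $|\eta|_k$, through the construction of $\epsilon(t)$, both solutions have small norm, so Proposition \ref{small sol is unique} forces $\eta=\epsilon(t)$. Because $\eta$ satisfies the Maurer--Cartan equation, so does $\epsilon(t)$, and the proposition characterizing which $\epsilon(t)$ solve the Maurer--Cartan equation then gives $H[\epsilon(t),\epsilon(t)]_{T(E)}=0$, i.e. $t\in\mathcal{S}$, as required. The only delicate point, and the main (mild) obstacle, is making the two smallness thresholds compatible: one must verify that the norm of the Kuranishi solution $\epsilon(t)$ is bounded by a constant times $|t|$ for $|t|$ small, which follows from the a priori estimates already recorded, so that the hypothesis $|\epsilon|_k\ll 1$ of Proposition \ref{small sol is unique} is genuinely met by both $\eta$ and $\epsilon(t)$.
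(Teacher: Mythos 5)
Your proof is correct and follows essentially the same route as the paper: both arguments combine the gauge condition $d^{*}_{T(E)}\eta=0$ with the Maurer--Cartan equation via the Hodge decomposition $I=H+\Delta_{T(E)}G$ to show $\eta=H\eta+\tfrac{1}{2}d^{*}_{T(E)}G[\eta,\eta]_{T(E)}$, identify $H\eta=\epsilon_1(t)$ for small $t$, and conclude by the uniqueness of small solutions (Proposition \ref{small sol is unique}). Your added remarks on why $t\in\mathcal{S}$ and on the compatibility of the smallness thresholds are points the paper leaves implicit, but they do not change the argument.
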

\begin{proof}
Since $\eta$ satisfies the Maurer-Cartan equation, we have 
\begin{center}
$d_{T(E)}\eta-\dfrac{1}{2}[\eta,\eta]_{T(E)}=0.$
\end{center}
Since $d^{*}_{T(E)}\eta=0$, we have
\begin{equation*}
\begin{split}
\Delta_{T(E)}\eta&=d^{*}_{T(E)}d_{T(E)}\eta + d_{T(E)}d^{*}_{T(E)} \eta\\
&=\dfrac{1}{2}d^{*}_{T(E)}[\eta,\eta]_{T(E)}.
\end{split}
\end{equation*}
Hence
\begin{equation*}
\eta-H\eta=G\Delta_{T(E)}\eta=\dfrac{1}{2}Gd^{*}_{T(E)}[\eta,\eta]_{T(E)}.
\end{equation*}
Let $\psi:=H\eta$. Then $\eta=\psi +\dfrac{1}{2}Gd^{*}_{T(E)}[\eta,\eta]_{T(E)}$. By the assumption such that $|\eta|_{k}$ is small, 
 $|\psi|_k$ is small by \eqref{estimate 4}. Hence $\psi=\epsilon_1(t)$ for $|t|$ small enough. Hence by the Proposition $\ref{small sol is unique},$ $\eta=\epsilon(t)$ for some $t\in S.$
\end{proof}
In general $d_{T(E)}^*\eta\neq0$ so we must try something else. We follow the idea of \cite{Ku}. Let us recall how we solved this problem in the complex manifold setting. 
The idea is that for a given Maurer-Cartan element $\phi\in A^{0,1}(TX)$, we deform $\phi$ along a diffeomorphism $f:X\to X$. \par
Let  $X_\phi$  be a complex manifold such that the complex structure comes from $\phi$. 
Let $f: X\to X$ be a diffeomorphism.   
We can induce a complex structure on $X$ by $f$.  We denote the corresponding Maurer-Cartan element as $\phi\circ f$. Note that $f:X_{\phi\circ f}\to X_{\phi}$ is a biholmorphic map.\par
Kuranishi showed that for every Maurer-Cartan elements $\phi$ with $|\phi|_k$  small, there is a diffeomorphism $f$ such that $\overline\partial_{TX}(\phi\circ f)=0.$ We recall  how we obtain such $f$.\par
Let $g=(g_{i\overline j})$ be a fixed Hermitian metric on $X$. Then we have the Christoffel symbols,
 \begin{equation*}
 \Gamma^i_{k,j}(z)=\sum_{l}g^{\overline l i}(z)\bigg(\dfrac{\partial g_{j\overline l}}{\partial z_k}\bigg)(z).
 \end{equation*}
Let $\xi=\sum_i\xi_i(z)\dfrac{\partial}{\partial z_i}\in A(TX).$ Let $z_0\in X$. Let $c(t)=c(t,z_0,\xi)=(c_1(t),\dots,c_n(t))$ be a curve such that,
\begin{itemize}
      \item $c(0,z_0,\xi)=z_0$,
      \item $\dfrac{dc}{dt}(0)=\sum_i\xi_i(z_0)\dfrac{\partial}{\partial z_i}$,
      \item $\dfrac{d^2 c_i}{dt^2}(t)+\sum_{j,k}\Gamma^i_{j,k}(c(t))\dfrac{dc_j}{dt}\dfrac{dc_k}{dt}=0$.
\end{itemize}
Let $f_\xi(z_0):=c(1,z_0,\xi)$. Since $X$ is compact, $f_\xi$ is a diffeomorphism. By using Taylor expansion for $f_\xi$, we obtain,
\begin{equation}\label{Kur eq}
\phi\circ f_\xi =  \phi + \overline\partial_{TX}\xi +R(\phi,\xi)
\end{equation}
where $R(t\phi,t\xi)=t^2R_1(\phi,\xi,t)$ if $t$ is a real number and both $R,R_1$ are smooth map on $X$. In \cite{Ku}, it was shown that there is a $\xi\in A(TX)$ such that $\overline\partial_{TX}(\phi\circ f_\xi)=0$ for any $\phi$ with $|\phi|_k$ small by the implicit function theorem between Banach spaces. \par
Let $\eta=(A+B,\phi)\in L^1$ be a Maurer-Cartan elements and assume $|\eta|_k$ is small enough so that $\eta$ can define a holomorphic-Higgs pair $(X_\eta,E_\eta,\theta_\eta)$. By the Kuranishi's work we have a $\xi\in A(TX)$ such that $\overline\partial_{TX}(\phi\circ f_\xi)=0$.  
\par
Let $P_\xi: E\to E$ be the parallel transport of the Chern connection along $f_\xi$. Let $\upsilon\in A(\mathrm{End}(E))$ and $exp(\upsilon):=\sum_{n=0}^{\infty}\dfrac{\upsilon^n}{n!}\in A(\mathrm{End}(E))$. $exp(\upsilon):E\to E$ is  an automorphism and the inverse is given as $exp(-\upsilon)$. Note that $(\upsilon,\xi)\in L^0.$ \par
Let $\Phi:=P_\xi\circ exp(\upsilon)$. Since $P_\xi$ is an isomorphism and compatible with $f_\xi$, $\Phi$ also is. Hence there is a smooth bundle isomorphism  $\widehat{\Phi}: E\to f_\xi^* E_\eta$ which is induced by $\Phi$. Hence we can induce a holomorphic-Higgs pair structure on $(X, E,\theta)$ via $\Phi$ and $f_\xi$. This holomorphic-Higgs pair is isomorphic to $(X_\eta, E_\eta,\theta_\eta)$. We denote the corresponding Maurer-Cartan element as $\eta_\gamma:=((A+B)\star\Phi,\phi\circ f_\xi)$.   
We show the existence of $\gamma:=(\upsilon,\xi)\in L^0$ such that $d_{T(E)}^*\eta_\gamma=0$. \par
We first prove the next proposition.
\begin{proposition}\label{small deform}
Let  $\eta_\gamma=((A+B)\star\Phi,\phi\circ f_\xi)$, $\eta=(A+B,\phi)$ and $\gamma=(\upsilon,\xi)$ be as above. Then we have,
\begin{equation}
((A+B)\star\Phi,\phi\circ f_\xi)=(A+B,\phi)+d_{T(E)}(\upsilon,\xi) + R((A,B,\phi),(\upsilon,\xi)).
\end{equation}
The error term $R$ is of order $t^2$ in the sense that 
\begin{equation*}
R(t(A,B,\phi),t(\upsilon,\xi))=t^2R_1((A,B,\phi),(\upsilon,\xi),t).
\end{equation*}
where $t$ is a real number and $R_1$ is a smooth map. 
\end{proposition}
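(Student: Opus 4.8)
The plan is to treat the base direction and the bundle direction separately and then recombine, mirroring Kuranishi's expansion \eqref{Kur eq}. The second ($TX$-)component of $\eta_\gamma$ is $\phi\circ f_\xi$, which is exactly the object Kuranishi analyzed: by \eqref{Kur eq} we already have $\phi\circ f_\xi=\phi+\overline\partial_{TX}\xi+R_{\mathrm{base}}(\phi,\xi)$ with $R_{\mathrm{base}}(t\phi,t\xi)=t^2R_1$. Since the second component of $d_{T(E)}(\upsilon,\xi)$ is precisely $\overline\partial_{TX}\xi$ (here $(\upsilon,\xi)\in L^0$), the $TX$-slot of the claimed identity is immediate, and it only remains to analyze the $\mathrm{End}(E)$-slot $(A+B)\star\Phi$.

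For the bundle slot the plan is to realize $(A+B)\star\Phi$ as the perturbation term extracted from the conjugated operator. Writing $\overline D$ for the operator attached to $\eta$, the new pair is obtained by pulling back the holomorphic-Higgs structure along $f_\xi$ and conjugating by $\Phi=P_\xi\circ\exp(\upsilon)$; concretely, $(A+B)\star\Phi$ is the $A^1(\mathrm{End}(E))$-part of the transported operator after subtracting the fixed pieces $\overline\partial_E+\{\partial_K,(\phi\circ f_\xi)\lrcorner\}+\theta$. I would expand this in two stages. First, conjugation by $\exp(\upsilon)$: using $\exp(-\upsilon)\,\overline\partial_E\,\exp(\upsilon)=\overline\partial_E+\overline\partial_{\mathrm{End}(E)}\upsilon+O(\upsilon^2)$ and $\exp(-\upsilon)\,\theta\,\exp(\upsilon)=\theta+[\theta,\upsilon]+O(\upsilon^2)$, the linear-in-$\upsilon$ contribution is $\overline\partial_{\mathrm{End}(E)}\upsilon+[\theta,\upsilon]$, while the $O(\upsilon^2)$ tails, together with all products of $\upsilon$ against the already-first-order data $A,B,\phi$ (the conjugation corrections of the $A$, $B$ and $\{\partial_K,\phi\lrcorner\}$ terms), are absorbed into the error. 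Second, the parallel transport $P_\xi$ together with the pullback by $f_\xi$: a Taylor expansion of the geodesic time-one parallel transport shows that the failure of $P_\xi$ to intertwine $\overline\partial_E$ is, to first order, a contraction of the curvature, contributing $\xi\lrcorner F_{d_K}$ to the $A^{0,1}(\mathrm{End}(E))$-part, while its interaction with $\theta$ contributes $\{\partial_K^{\mathrm{End}(E)},\xi\lrcorner\}\theta=C(\xi)$ to the $A^{1,0}(\mathrm{End}(E))$-part.

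Collecting the linear terms, the $\mathrm{End}(E)$-slot equals $\overline\partial_{\mathrm{End}(E)}\upsilon+\xi\lrcorner F_{d_K}+[\theta,\upsilon]+\{\partial_K^{\mathrm{End}(E)},\xi\lrcorner\}\theta$, which is exactly the first component of $d_{T(E)}(\upsilon,\xi)$ computed in the proof of Proposition \ref{d for DGLA} (with $(\upsilon,\xi)\in L^0$, so the sign in $(-1)^0\xi\lrcorner F_{d_K}$ is $+$). Together with the $TX$-slot this gives $\eta_\gamma=\eta+d_{T(E)}(\upsilon,\xi)+R$. For the quadratic scaling I would check termwise that $R$ collects only Kuranishi's base remainder $R_{\mathrm{base}}$, the Taylor tails of $\exp(\upsilon)$ and of the parallel transport $P_\xi$ (each beginning at order $\geq 2$ in $\gamma$), and the cross terms pairing one factor of the first-order datum $(A,B,\phi)$ with one factor of $(\upsilon,\xi)$. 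Each of these is a product of two quantities that scale linearly under $(A,B,\phi,\upsilon,\xi)\mapsto t(A,B,\phi,\upsilon,\xi)$, so $R(t\,\cdot\,,t\,\cdot\,)=t^2R_1$, and $R_1$ is smooth because it is assembled from the smooth dependence of the geodesic flow $f_\xi$, of parallel transport $P_\xi$, and of the exponential on their arguments.

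The hard part will be the parallel-transport computation producing $\xi\lrcorner F_{d_K}$ and $C(\xi)$. One must carefully relate the infinitesimal change of the geodesic time-one parallel transport to the Chern connection and its curvature: the curvature appears precisely because parallel transport is not holomorphic, and it is the $(0,1)$-part of this failure, the contraction $\xi\lrcorner F_{d_K}\in A^{0,1}(\mathrm{End}(E))$, that deforms $\overline\partial_E$ and lands in the $A$-slot, while the commutator of the transported frame with $\theta$ yields $C(\xi)\in A^{1,0}(\mathrm{End}(E))$ in the $B$-slot. Keeping track of the bidegrees and of the $(1,0)$-contraction conventions so that these two pieces fall into the correct slots is the delicate point; once that is pinned down, everything else is a routine, if lengthy, Taylor expansion against \eqref{Kur eq}.
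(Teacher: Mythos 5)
Your proposal is correct and follows essentially the same route as the paper: reduce to the $\mathrm{End}(E)$-slot via \eqref{Kur eq}, split it into the $A$-part and the $B$-part, and extract the linear terms $\overline\partial_{\mathrm{End}(E)}\upsilon+\xi\lrcorner F_{d_K}$ and $[\theta,\upsilon]+\{\partial_K^{\mathrm{End}(E)},\xi\lrcorner\}\theta$ by expanding the conjugation by $\Phi=P_\xi\circ\exp(\upsilon)$ and the pullback by $f_\xi$ to first order, with $P'_\xi=-\xi\lrcorner K^{-1}\partial K$. The one point you flag as delicate is indeed where the paper does the work: the curvature term comes not from $[\overline\partial_E,P'_\xi]$ alone but from its combination with the $\{\partial_K,\overline\partial_{TX}\xi\lrcorner\}$ term produced by the first-order change of $\phi\circ f_\xi$, via the identity $\{\partial_K,\overline\partial_{TX}\xi\lrcorner\}-\overline\partial_E\circ P'_\xi+P'_\xi\circ\overline\partial_E=-\xi\lrcorner F_{d_K}$ on a holomorphic frame.
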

\begin{proof}
Before going to the proof, we prepare some terminologies. Let $A\in A^{0,1}(\mathrm{End}(E)), B\in A^{1,0}(\mathrm{End}(E)), \upsilon\in A(\mathrm{End}(E)), \phi\in A^{0,1}(TX)$ and $\xi\in A(TX)$. If we denote $R((A, B,\phi),(\upsilon,\xi))$ then it is a smooth map on $X$ such that $R$ depends on $A, B,\upsilon,\phi$ and $\xi$ and $R$ is of order $t^2$ in the sense that
\begin{equation*}
R(t(A,B,\phi),t(\upsilon,\xi))=t^2R_1((A,B,\phi),(\upsilon,\xi),t)
\end{equation*}
where $t$ is a real number and $R_1((A,B,\phi),(\upsilon,\xi),t)$ is a smooth map defined on $X$. We assume that the same property holds for $R((A,\phi),(\upsilon,\xi)), R((B,\phi),(\upsilon,\xi))$.\par
If we denote $R'((A, B,\phi),(\upsilon,\xi))$, then it is a smooth map defined on some open set of $X$ such that  $R'$ depends on $A, B,\upsilon,\phi$, and $\xi$ and $R'$ is of order $t^2$ in the sense that 
\begin{equation*}
R'(t(A,B,\phi),t(\upsilon,\xi))=t^2R'_1((A,B,\phi),(\upsilon,\xi),t)
\end{equation*}
where $t$ is a real number and $R'_1((A,B,\phi),(\upsilon,\xi),t)$ is a smooth map defined on some open set of $X$. We assume that the same property holds for $R'((A,\phi),(\upsilon,\xi)), R'((B,\phi),(\upsilon,\xi)),R'(\upsilon,\xi)$ and $R'(\phi,\xi)$.\par
By \eqref{Kur eq}, we only have to prove the following.
\begin{equation}\label{eq00}
(A+B)\star\Phi=A + B + \overline\partial_{\mathrm{End}(E)}\upsilon + \xi\lrcorner F_{d_K} +[\theta,\upsilon] + \{\partial_{K}^{\mathrm{End}(E)},\xi\lrcorner\}\theta + R((A,B,\phi),(\upsilon,\xi)).
\end{equation}
First we prove
\begin{equation}\label{eq0}
A\star\Phi=A + \overline\partial_{\mathrm{End}(E)}\upsilon + \xi\lrcorner F_{d_K} + R((A,\phi),(\upsilon,\xi)).
\end{equation}\par
Let $U'$ and $U$ be open sets of $X$ such that $U'\subset U$ and $f_\xi(U')\subset U$. We calculate $(A\star\Phi-A)(z)$ for $z\in U'$.
Let $\{e_k\}$ be a holomorphic frame on $U$ for $E_{\eta'}$.  Since $E_{\eta'}$'s complex structure is induced by $\Phi$, $\Phi:E_{\eta'}\to E_\eta$ is a holomorphic bundle isomorphism. Hence $\Phi(e_k)$ is a holomorphic section for $E_\eta$. Hence we have,
\begin{align}
\label{eq1}\overline\partial_{E}e_k +\{\partial_K,(\phi\circ f_\xi)\lrcorner\}e_k + (A\star\Phi) e_k=0,\\
\label{eq2}\Phi^{-1}\circ(\overline\partial_{E}+\{\partial_k,\phi\lrcorner\} + A)\circ\Phi(e_k)=0.
\end{align}
Since \eqref{Kur eq}, \eqref{eq1} is equivalent to,
\begin{equation}\label{eq3}
\overline\partial_{E}e_k +\{\partial_K,\phi\lrcorner\}e_k +\{\partial_K, \overline\partial_{TX}\xi\lrcorner\}e_k + (A\star\Phi) e_k +  R'(\phi,\xi)(e_k)=0.
\end{equation}
Let $P'_\xi$ be the first order of $P_\xi$.
Since $\Phi=P_\xi\circ exp(\upsilon)$, we have an expansion for $\Phi (e_k)$ such that
\begin{equation*}
\Phi(e_k)=P_\xi\circ exp(\upsilon)(e_k)=e_k+P'_\xi(e_k)+\upsilon(e_k)+R'(\upsilon,\xi)(e_k).
\end{equation*}
Hence \eqref{eq2} is equivalent to,
\begin{equation*}
\overline\partial_{E}e_k + \{\partial_K,\phi\lrcorner\}e_k + Ae_k+\overline\partial_{E}(P'_\xi e_k) - P'_\xi\overline\partial_E e_k + \overline\partial_{E}\upsilon e_k -\upsilon\overline\partial_E(e_k) +R'((A,\phi),(\upsilon,\xi))(e_k)=0.
\end{equation*}
Since $\overline\partial_E(\upsilon e_k)-\upsilon\overline\partial_E e_k=(\overline\partial_{\mathrm{End}(E)}\upsilon)e_k,$ we have,
\begin{equation}\label{eq4}
\overline\partial_{E}e_k + \{\partial_K,\phi\lrcorner\}e_k + Ae_k + \overline\partial_{E}(P'_\xi e_k) - P'_\xi\overline\partial_E e_k  + (\overline\partial_{\mathrm{End}(E)}\upsilon )e_k +R'((A, \phi),(\upsilon,\xi))(e_k)=0.
\end{equation}
Hence by \eqref{eq3}, \eqref{eq4}, we have,
\begin{equation}\label{eq5}
(A\star\Phi)e_k - Ae_k +\{\partial_K, \overline\partial_{TX}\xi\lrcorner\}e_k - \overline\partial_{E}(P'_\xi e_k) + P'_\xi\overline\partial_E e_k- (\overline\partial_{\mathrm{End}(E)}\upsilon)e_k +R'((A,\phi),(\upsilon,\xi))(e_k)=0.
\end{equation}
We have to prove $\{\partial_K, \overline\partial_{TX}\xi\lrcorner\} - \overline\partial_{E}\circ P'_\xi + P'_\xi\circ\overline\partial_E =-\xi\lrcorner F_{d_K}$. We prove this for a holomorphic frame $\{e'_k\}$ for $E$ on $U$. 
Since $P_\xi$ is the parallel transport along $f_\xi$ respect to the Chern connection,  we have 
$P'_\xi(e'_k)=-\xi\lrcorner K^{-1}\partial K(e'_k)$. (See \cite{Spi} for more details).
Hence we have, 
\begin{equation*}
\begin{split}
\{\partial_K, \overline\partial_{TX}\xi\lrcorner\}e'_k - \overline\partial_{E}\circ P'_\xi e'_k + P'_\xi\circ\overline\partial_E e'_k&=-\overline\partial_{TX}\xi\lrcorner\partial_K e'_k +\overline\partial_E(\xi\lrcorner K^{-1}\partial Ke'_k )\\ 
&=-\overline\partial_{TX}\xi\lrcorner\partial_K e'_k +\overline\partial_{TX}\xi\lrcorner K^{-1}\partial Ke'_k -\xi\lrcorner(\overline\partial_{\mathrm{End}(E)}(K^{-1}\partial K))e'_k\\
&=(-\xi\lrcorner F_{d_K})e'_k.
\end{split}
\end{equation*}
Hence by \eqref{eq5}, we have 
\begin{equation*}\label{eq6}
(A\star\Phi)e_k = Ae_k  + (\overline\partial_{\mathrm{End}(E)}\upsilon)e_k + \xi\lrcorner F_{d_K}(e_k) +R'((A,\phi),(\upsilon,\xi))(e_k).
\end{equation*}
Since $\{e_k\}$ is an arbitrary holomorphic frame on $E_{\eta'}$ and $R'((A,\phi),(\upsilon,\xi))(e_k)$ is a local expression of $A\star\Phi-A - \overline\partial_{\mathrm{End}(E)}\upsilon -\xi\lrcorner F_{d_K}$ we proved \eqref{eq0}.\par
Next, we prove that,
\begin{equation}\label{eq6}
B\star\Phi= B + [\theta,\upsilon] + \{\partial_{K}^{\mathrm{End}(E)},\xi\lrcorner\}\theta +R((B,\phi),(\upsilon,\xi)).
\end{equation}
Recall that $\widehat{\Phi}:E_{\eta'}\to f_\xi^*(E_\eta)$ is a holomorphic bundle isomorphism and $\theta_{\eta'}=\widehat{\Phi}^{-1}\circ f_\xi^*(\theta_\eta)\circ\widehat{\Phi}$. Let $\theta_{\eta'}^{1,0}$ is the $(1,0)$-part of $\theta_{\eta'}$ respect to the original complex structure, then we have $B\star\Phi=\theta_{\eta'}^{1,0}-\theta$.  We calculate $B\star\Phi$ locally.\par
Let $(U,z)$ be a local coordinate and $U'\subset U$. We assume $f_\xi(U')\subset U$ and $\xi=\sum_i\xi_i(z)\bigg(\dfrac{\partial}{\partial z_i}\bigg)$. By the definition of $f_\xi$, for $z\in U'$, we have
 \begin{equation*}
f_\xi(z)=(z_1+\xi_1(z)+O(|\xi|^2),\dots, z_n+\xi_n(z)+O(|\xi|^2)).
\end{equation*}
Let $\{e_k\}$ be a holomorphic frame on $U$ for $E$. Let $g_i, B_i\in A(\mathrm{End}(E)).$ Assume that $\theta$ is locally expressed as $\sum_i g_i(z)dz_i$ and $B$ as $\sum_i B_i(z)dz_i$ respect to this frame. \par
Let the bracket $[\cdot,\cdot]$ be the canonical Lie bracket defined on $A^*(\mathrm{End}(E)).$ 
Since $\theta_\eta=\theta+ B +\phi\lrcorner(\theta + B)=(g_i+B_i) dz_i+(g_i+B_i)\phi^i_j d\overline z_j$ and $\widehat{\Phi}$ is induced by $\Phi$, we have,
\begin{equation*}
\begin{split}
\theta_{\eta'}(e_k)=&\widehat{\Phi}^{-1}\circ f_\xi^*(\theta_\eta)\circ\widehat{\Phi}(e_k)\\
=&\widehat{\Phi}^{-1}\circ\{(g_i+B_i)(f_\xi(z))d f_{\xi,i}(z)+(g_i+B_i)(f_\xi(z))\phi^i_j(f_\xi(z))d\overline f_{\xi,j}\}\circ\widehat{\Phi}(e_k)\\
=&((g_i+B_i)(f_\xi(z))d f_{\xi,i}(z)+(g_i+B_i)(f_\xi(z))\phi^i_j(f_\xi(z))d\overline f_{\xi,j})(e_k)\\
&+[(g_i+B_i)(f_\xi(z))d f_{\xi,i}(z)+(g_i+B_i)(f_\xi(z))\phi^i_j(f_\xi(z))d\overline f_{\xi,j},P'_\xi](e_k)\\
&+[(g_i+B_i)(f_\xi(z))d f_{\xi,i}(z)+(g_i+B_i)(f_\xi(z))\phi^i_j(f_\xi(z))d\overline f_{\xi,j},\upsilon](e_k)+R'((B,\phi),(\upsilon,\xi))(e_k).
\end{split}
\end{equation*}
Hence,
\begin{equation}\label{eq7}
\begin{split}
 \theta_{\eta'}^{1,0}(e_k)=&\bigg((g_i+B_i)(f_\xi(z))dz_i+(g_i+B_i)(f_\xi(z))\dfrac{\partial \xi_i}{\partial z_j}dz_j\bigg)(e_k)\\
 &+\bigg[((g_i+B_i)(f_\xi(z))dz_i+(g_i+B_i)(f_\xi(z))\dfrac{\partial \xi_i}{\partial z_j}dz_j,P'_\xi\bigg](e_k)\\
 &+\bigg[((g_i+B_i)(f_\xi(z))dz_i+(g_i+B_i)(f_\xi(z))\dfrac{\partial \xi_i}{\partial z_j}dz_j,\upsilon\bigg](e_k) +R'((B,\phi),(\upsilon,\xi))(e_k)\\
 =&\bigg((g_i+B_i)(f_\xi(z))dz_i+g_i(f_\xi(z))\dfrac{\partial \xi_i}{\partial z_j}dz_j\bigg)(e_k)\\
 &+[g_i(f_\xi(z))dz_i,P'_\xi](e_k)+[g_i(f_\xi(z))dz_i,\upsilon](e_k)+R'((B,\phi),(\upsilon,\xi))(e_k).
 \end{split}
 \end{equation}
Since $f_\xi(z)=c(z,\xi,1),$ we have the Taylor expansion at $t=0$ for $g_i(f_\xi(z))$ and $B_i(f_\xi(z))$.
\begin{equation*}
\begin{split}
g_i(f_\xi(z))=g_i(z)+\xi_j(z)\dfrac{\partial g_i}{\partial z_j}(z)+O(|\xi|^2),\\
B_i(f_\xi(z))=B_i(z)+\xi_j(z)\dfrac{\partial B_i}{\partial z_j}(z)+O(|\xi|^2).
\end{split}
\end{equation*} 
Hence by \eqref{eq7},  $\theta_{\eta'}^{1,0}(e_k)$ becomes,
\begin{equation*}
\begin{split}
&\theta_{\eta'}^{1,0}(e_k)\\
=&\bigg(g_i(z)dz_i + \xi_j\dfrac{\partial g_i}{\partial z_j}(z)dz_i+B_i(z)dz_i+g_i(z)\dfrac{\partial \xi_i}{\partial z_j}dz_j\bigg)(e_k)
+[g_i(z)dz_i,P'_\xi](e_k)+[g_i(z)dz_i,\upsilon](e_k)+R'((B,\phi),(\upsilon,\xi))(e_k)\\
=&(\theta+B)(e_k)+\bigg( \xi_j\dfrac{\partial g_i}{\partial z_j}(z)dz_i+g_i(z)\dfrac{\partial \xi_i}{\partial z_j}dz_j\bigg)(e_k)+[\theta,P'_\xi](e_k)+[\theta,\upsilon](e_k)+R'((B,\phi),(\upsilon,\xi))(e_k).
\end{split}
\end{equation*}
Hence,
\begin{equation}\label{eq8}
\begin{split}
B\star\Phi(e_k)=&\theta^{1,0}_\eta(e_k)-\theta(e_k)\\
=&B(e_k)+\bigg( \xi_j\dfrac{\partial g_i}{\partial z_j}(z)dz_i+g_i(z)\dfrac{\partial \xi_i}{\partial z_j}dz_j\bigg)(e_k)+[\theta,P'_\xi](e_k)+[\theta,\upsilon](e_k)+R'((B,\phi),(\upsilon,\xi))(e_k).
\end{split}
\end{equation}
Hence the only thing we have to prove is  $ \bigg(\xi_j\dfrac{\partial g_i}{\partial z_j}(z)dz_i+g_i(z)\dfrac{\partial \xi_i}{\partial z_j}dz_j\bigg)(e_k)+[\theta,P'_\xi](e_k)=(\{\partial_{K}^{\mathrm{End}(E)},\xi\lrcorner\}\theta)(e_k).$ Since $\{e_k\}$ is a local holomorphic frame for $E$, we have 
\begin{equation*}
\begin{split}
&\bigg(\xi_j\dfrac{\partial g_i}{\partial z_j}(z)dz_i+g_i(z)\dfrac{\partial \xi_i}{\partial z_j}dz_j\bigg)(e_k)+[\theta,P'_\xi](e_k)\\
=&\bigg(\xi_j\dfrac{\partial g_i}{\partial z_j}(z)dz_i+g_i(z)\dfrac{\partial \xi_i}{\partial z_j}dz_j\bigg)(e_k)+\theta(-\xi\lrcorner K^{-1}\partial K)(e_k)+\xi\lrcorner K^{-1}\partial K(\theta(e_k))\\
=&\bigg(\xi_j\dfrac{\partial g_i}{\partial z_j}(z)dz_i+g_i(z)\dfrac{\partial \xi_i}{\partial z_j}dz_j\bigg)(e_k)+[\xi\lrcorner K^{-1}\partial K,\theta](e_k)\\
\mathrm{and}&\\
&\{\partial_K,\xi\lrcorner\}\theta=\partial_K(\xi\lrcorner\theta)+\xi\lrcorner\partial_K(\theta)=\partial(\xi_i(z)g_i)+[K^{-1}\partial K,\xi\lrcorner\theta]+\xi_j(z)\dfrac{\partial g_i}{\partial z_j}dz_i-\xi_i(z)\dfrac{\partial g_i}{\partial z_j}dz_j+\xi\lrcorner[K^{-1}\partial K,\theta]\\
=&\dfrac{\partial}{\partial z_j}(\xi_i(z)g_i(z))dz_j+\xi_j(z)\dfrac{\partial g_i}{\partial z_j}dz_i-\xi_i(z)\dfrac{\partial g_i}{\partial z_j}dz_j+[\xi\lrcorner K^{-1}\partial K,\theta]\\
=&\bigg(\xi_j\dfrac{\partial g_i}{\partial z_j}(z)dz_i+g_i(z)\dfrac{\partial \xi_i}{\partial z_j}dz_j\bigg)+[\xi\lrcorner K^{-1}\partial K,\theta].
\end{split}
\end{equation*}
Hence we have the desired equality. Hence by \eqref{eq8} we have,
\begin{equation*}
B\star\Phi(e_k)=B(e_k)+[\theta,\upsilon](e_k)+(\{\partial_{K}^{\mathrm{End}(E)},\xi\lrcorner\}\theta)(e_k)+R'((B,\phi),(\upsilon,\xi))(e_k).
\end{equation*}
Since $\{e_k\}$ is an arbitrary local holomorphic frame on $E$ and $R'((B,\phi),(\upsilon,\xi))(e_k)$ is a local expression of $B\star\Phi- B - [\theta,\upsilon]-(\{\partial_{K}^{\mathrm{End}(E)},\xi\lrcorner\}\theta)$, we proved \eqref{eq6}.\par
Hence by \eqref{eq0} and \eqref{eq6} we proved \eqref{eq00}. This completes the proof.
\end{proof}
Recall that $\mathbb{H}^0=\mathrm{ker}(\Delta_{T(E)})\subset L^0$. Let $F^0$ be the orthogonal complement of $\mathbb{H}^0$ w.r.t the inner product $(\cdot,\cdot)$. Note that $\mathrm{ker}(H)=F^0$. $H$ is the harmonic projection. Then for $\gamma\in F^0$,
\begin{equation*}
\eta=G\Delta_{T(E)}\gamma+H\gamma=G\Delta_{T(E)}\gamma.
\end{equation*}
Since $d^{*}_{T(E)}$ is zero on $L^0$, $d^{*}_{T(E)}(\gamma)=0$. Hence
\begin{equation*}
\Delta_{T(E)}\gamma=d^{*}_{T(E)}d_{T(E)}\gamma.
\end{equation*}
This yields,
\begin{equation}\label{orthogonal}
\gamma=Gd^{*}_{T(E)}d_{T(E)}\gamma.
\end{equation}
From now on, we think $L^1, F^0$ as normed by $k$-th Sobolev norm and $L^0$ normed by $(k-1)$-th Sobolev norm. Let $L^0_{k-1}, L^1_k, F^0_k$ be the completion of $L^0,L^1,F^0$ with respect to the corresponding norms.
\begin{proposition}\label{implicit fun}
Let $\eta(\gamma):=\eta+d_{T(E)}\gamma+R(\eta,\gamma).$ There are neighborhoods of the origin $U$ and $V$ in $L^0$ and $F^0$ such that for any $\eta\in U$ there is a $\gamma\in V$ such that
\begin{equation}\label{vanish}
d^*_{T(E)}(\eta(\gamma))=0
\end{equation}
\end{proposition}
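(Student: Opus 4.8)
The plan is to rewrite the single equation \eqref{vanish} as a fixed-point equation whose linearization at the origin is the identity, and then invoke the implicit function theorem for Banach spaces, following \cite{Ku}. First I would expand, for $\gamma\in F^0$,
\begin{equation*}
d^*_{T(E)}\bigl(\eta(\gamma)\bigr)=d^*_{T(E)}\eta+d^*_{T(E)}d_{T(E)}\gamma+d^*_{T(E)}R(\eta,\gamma).
\end{equation*}
Because $d^*_{T(E)}$ vanishes on $L^0$ we have $d^*_{T(E)}d_{T(E)}\gamma=\Delta_{T(E)}\gamma$, so \eqref{vanish} is equivalent to $\Delta_{T(E)}\gamma=-d^*_{T(E)}\eta-d^*_{T(E)}R(\eta,\gamma)$. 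The right-hand side lies in the image of $d^*_{T(E)}$, hence is orthogonal to $\mathbb{H}^0$; applying $G$, and using $\Delta_{T(E)}G=I-H$ together with $H\gamma=0$ (as $\gamma\in F^0$) and \eqref{orthogonal}, I would check that \eqref{vanish} is equivalent to the fixed-point equation
\begin{equation*}
\gamma=-Gd^*_{T(E)}\eta-Gd^*_{T(E)}R(\eta,\gamma).
\end{equation*}
The converse implication uses that $H$ annihilates the image of $d^*_{T(E)}$ and that $\Delta_{T(E)}\gamma=d^*_{T(E)}d_{T(E)}\gamma$ on $L^0$, so that no information is lost in passing through $G$.

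Next I would define
\begin{equation*}
\mathcal{F}(\gamma,\eta):=\gamma+Gd^*_{T(E)}\eta+Gd^*_{T(E)}R(\eta,\gamma)
\end{equation*}
as a map $F^0_k\times L^1_k\to F^0_k$. Since $G$ commutes with $d^*_{T(E)}$, the estimate $|d^*_{T(E)}G\phi|_k\le c_k|\phi|_{k-1}$ from \eqref{estimate 4} shows $Gd^*_{T(E)}\eta\in F^0_k$, and combined with the multiplicative Sobolev estimate it also controls $Gd^*_{T(E)}R(\eta,\gamma)$: the second-order vanishing $R(t\eta,t\gamma)=t^2R_1$ furnished by Proposition \ref{small deform} gives a bound of the shape $|R(\eta,\gamma)|_{k-1}\lesssim\bigl(|\eta|_k+|\gamma|_k\bigr)^2$, so that $\mathcal{F}$ is a well-defined $C^1$ map of Banach spaces with $\mathcal{F}(0,0)=0$. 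Crucially, the quadratic vanishing of $R$ forces its differential at the origin to vanish, whence the partial differential in $\gamma$ is $D_\gamma\mathcal{F}(0,0)=\mathrm{id}_{F^0_k}$, which is trivially invertible.

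The implicit function theorem then produces neighborhoods of the origin $U\subset L^1_k$ and $V\subset F^0_k$ and a map $\eta\mapsto\gamma(\eta)\in V$ with $\mathcal{F}(\gamma(\eta),\eta)=0$ for every $\eta\in U$; by the equivalence established in the first step this is exactly \eqref{vanish}. I expect the main obstacle to be the analytic bookkeeping needed to make $\mathcal{F}$ legitimately $C^1$: one must verify that the error term $R$ — assembled in Proposition \ref{small deform} from the geodesic flow $f_\xi$, the parallel transport $P_\xi$, and $\exp(\upsilon)$ — really defines a continuously differentiable map between the chosen Sobolev completions satisfying the quadratic estimate, so that the chain rule and $DR(0,0)=0$ are justified; this is exactly where the hypothesis $k\gg1$ and the estimates \eqref{estimate 4} are used. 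Finally, although $\gamma(\eta)$ is produced a priori only in $F^0_k$, its smoothness follows by the same elliptic-regularity bootstrap applied earlier to $\epsilon(t)$, now for the elliptic equation $\Delta_{T(E)}\gamma=-d^*_{T(E)}\bigl(\eta+R(\eta,\gamma)\bigr)$.
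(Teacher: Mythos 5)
Your proposal is correct and follows essentially the same route as the paper: both reduce \eqref{vanish} to the fixed-point equation $\gamma+Gd^*_{T(E)}\eta+Gd^*_{T(E)}R(\eta,\gamma)=0$ via \eqref{orthogonal}, apply the Banach implicit function theorem using the quadratic vanishing of $R$ to see that the $\gamma$-derivative at the origin is the identity, and recover smoothness of $\gamma$ by elliptic regularity. The only (cosmetic) difference is that you take the target of the map to be $F^0_k$ where the paper uses $L^0_{k-1}$; your choice makes the invertibility of the linearization slightly cleaner to state.
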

\begin{proof}
Let $\gamma:=(\upsilon,\xi)\in F^0$. By the definition of $\eta(\gamma)$, \eqref{vanish} is equivalent to 
\begin{equation*}
0=d^*_{T(E)}(\eta(\gamma))=d^*_{T(E)}\eta+d^*_{T(E)}d_{T(E)}\gamma+d^*_{T(E)}R(\eta,\gamma).
\end{equation*}
By \eqref{orthogonal},
\begin{equation*}
\gamma=Gd^{*}_{T(E)}d_{T(E)}\gamma=-Gd^*_{T(E)}\eta-Gd^*_{T(E)}R(\eta,\gamma).
\end{equation*}
Thus \eqref{vanish} is equivalent to
\begin{equation*}
\gamma+Gd^*_{T(E)}\eta+Gd^*_{T(E)}R(\eta,\gamma)=0.
\end{equation*}
Let $U_1$ and $V_1$ are neighborhoods of the origin of $L^1_k$ and $F^0_k$. By the local property of $R(\eta,\gamma)$ which we observed in Proposition \ref{small deform}, we can define a map $C^1$ map $ h:U_1\times V_1\to L^0_{k-1}$ by,
\begin{equation*}
h(\eta,\gamma)=\gamma+Gd^*_{T(E)}\eta+Gd^*_{T(E)}R(\eta,\gamma).
\end{equation*}
By the order condition on the error term $R$, the identity map is the derivative of $h$ concerning $\gamma$ at the $(0,0)$. Hence by the implicit function theorem for Banach spaces, there exists an open neighborhood $U_0$ of $0\in L_k^1$ and a  continuous map $g: U_0\to V_1$ such that $g(0)=0$ and such that $h(\eta,\gamma)=0$  if and only if $\gamma=g(\eta)$ for all $\eta\in U_0$ (See \cite{Lang} for details).\par
Let $U:=U_0\cap L^0$ and $V:=g(U_0)\cap F^0$. Let $\eta\in U$ and $\gamma:=g(\eta)$. By the previous section, we have $h(\eta,\gamma)=0.$ If we take $U_0$ small enough, $\Delta_{T(E)}+d^*_{T(E)}R(\eta,\cdot)+d^*_{T(E)}\eta$ is a quasi-linear elliptic operator. By elliptic regularity, $\gamma$ is smooth. Hence $\gamma\in V$. Hence this completes the proof.
\end{proof}
We can now give the proof of Theorem \ref{main}.
\begin{proof}[Proof of Theorem \ref{main}] Let $\eta\in L^1$ be a Maurer-Cartan element and $|\eta|_k<<1$. By Proposition 4.4, we only have to prove for $d^*_{T(E)}\eta\neq 0$. By Proposition \ref{implicit fun}, we have a $\gamma=(\upsilon,\xi)\in L^0$ such that 
\begin{equation*}
d^*_{T(E)}\eta+d^*_{T(E)}d_{T(E)}\gamma+d^*_{T(E)}R(\eta,\gamma)=0.
\end{equation*}
Let $\Phi:=P_\xi\circ exp(\upsilon)$. We can induce a holomorphic-Higgs structure on $(X,E,\theta)$ that is isomorphic to $(X_\eta, E_\eta, \theta_\eta)$ by $\Phi$ and $f_\xi$. We denote the corresponding Maurer-Cartan element as $\eta_\gamma$. By Proposition \ref{small deform}, we have 
\begin{equation*}
\eta_\gamma=\eta+d_{T(E)}\gamma+R(\eta,\gamma).
\end{equation*}
We can easily see that $d^*_{T(E)}\eta_\gamma=0$. Hence by Proposition 4.4, we have some $t\in \mathcal{S}$ such that  $(X_{\epsilon(t)}, E_{\epsilon(t)},\theta_{\epsilon(t)})\cong (X_\eta, E_\eta, \theta_\eta).$ This completes the proof.
\end{proof}

\newcommand{\Addresses}
  \bigskip
  \footnotesize
  T.~Ono, \textsc{Department of Mathematics, Graduate School of Science, Osaka University, Osaka,
Japan.}\par\nopagebreak
  \textit{E-mail address}: \texttt{u708091f@ecs.osaka-u.ac.jp}


\begin{thebibliography}{99}
\bibitem{Cap} Raffaele Caputo. Existence of a versal deformation for compact complex analytic spaces endowed with logarithmic structure. PhD thesis, Universit\"{a}t Hamburg, Von-Melle-Park 3, 20146 Hamburg, 2019. http://ediss.sub.uni-hamburg.de/volltexte/2020/10273.
\bibitem{Chan} K. Chan and Y. Suen, A differential-geometric approach to deformations of pairs $(X, E)$, Complex Manifolds, $\mathbf{3}$ (2016), Art. 2.
\bibitem{Doua a} A. Douady.  Le probl\`eme des modules pour les sous-espaces analytiques compacts d'un espace analytique donn\'{e}.
{\em Ann. Inst. Fourier (Grenoble)}, 16(fasc. 1):1--95, 1966..
\bibitem{Doua b} A. Douady. Le probl\`eme des modules locaux pour les espaces {${\bf
  C}$}-analytiques compacts. {\em Ann. Sci. \'{E}cole Norm. Sup. (4)}, 7:569--602 (1975), 1974..
\bibitem{Dou}A. Douglis and L. Nirenberg, Interior estimates for elliptic systems of partial differential equations, Comm. Pure Appl. Math. $\mathbf{8}$ (1955), 503-538. MR 0075417 (17,743b)
\bibitem{Hua} L. Huang, On joint moduli spaces, Math. Ann., 302:1 (1995), pp. 61-79.
\bibitem{Huy} D. Huybrechts and R. P. Thomas, Deformation-obstruction theory for complexes via Atiyah and Kodaira-Spencer classes, Math. Ann. $\mathbf{346}$ (2010), no.3, 545-569. MR 2578562 (2011b:14030).
\bibitem{Kod}  K. Kodaira, Complex manifolds and deformation of complex structures, Grundlehren der Mathematischen Wissenschaften, Springer, Berlin, Vol. 283, 1986.
\bibitem{Morrow-Kodaira_book} K.~Kodaira and J.~Morrow, \emph{Complex manifolds}, AMS Chelsea Publishing, Providence, RI, 2006, Reprint of the 1971 edition with errata. 
\bibitem{Kod-Spe 1} K. Kodaira and D. C. Spencer, On deformations of complex analytic structures. I, II, Ann. of Math. (2) $\mathbf{67}$ (1958), 328-466. MR 0112154.
\bibitem{Kod-Spe 2} -----,  On deformations of complex analytic structures. III. Stability theorems for complex structures, Ann. of Math. (2) $\mathbf{71}$ (1960), 43-76. MR 0115189.
\bibitem{Ku} M. Kuranishi, New proof for the existence of locally complete families of complex structures, Proc. Conf. Complex Analysis (Minneapolis, 1964), Springer, Berlin, 1965, pp. 142-154. MR 0176496
\bibitem{Lang}  S. Lang, Real and Functional Analysis, Graduate Texts in Math., vol. 142, Springer-Verlag, 1993.
\bibitem{Li} S. Li, On the deformation theory of pair $(X, E)$, preprint, arXiv:0809.0344.
\bibitem{M1} E. Martinengo, Higher brackets and moduli space of vector bundles, Ph.D. thesis, Sapienza Universit`a di Roma, 2009.
\bibitem{M2} E. Martinengo, Infinitesimal deformations of Hitchin pairs and Hitchin map, Internat.J. Math. $\mathbf{23}$ (2012), 125-153.
\bibitem{Mo} A. Moroianu, Lectures on K$\ddot{\mathrm{a}}$hler geometry, London Mathematical Society Student Texts, vol. 69, Cambridge University Press, Cambridge, 2007. MR 2325093 (2008f:32028)
\bibitem{Hit} N.J. Hitchin, The self-duality equations on a Riemann surface, Proc. Lond. Math. Soc.$\mathbf{3}$ (1) (1987) 59-126.
\bibitem{Ser}  E. Sernesi, Deformations of algebraic schemes, Grundlehren der Mathematischen Wissenschaften, vol. 334, Springer-Verlag, Berlin, 2006. MR 2247603 (2008e:14011).
\bibitem{Sim} C. Simpson. Constructing variations of Hodge structure using Yang-Mills theory and applications to uniformization. Journ. A. M. S. $\mathbf{1}$ (1988), 867-918
\bibitem{Siu} Y.-T. Siu and G. Trautmann, Deformations of coherent analytic sheaves with compact supports, Mem. Amer. Math. Soc. $\mathbf{29}$ (1981), no. 238, iii+155. MR 597091 (82c:32023).
\bibitem{Spi} M. Spivak, A Comprehensive Introduction to Differential Geometry, 3rd ed: Publish or Perish, Inc., 1999, vol. 2.
\bibitem{Wells}  R. O. Wells, Jr., Differential Analysis on Complex Manifolds, second Grad. Texts in Math. 65, Springer-Verlag, New York, 1980. MR 0608414. Zbl 0435.32004.
\end{thebibliography}
\end{document}